\newcolumntype{M}[1]{>{\centering\arraybackslash}m{#1}} 
\DeclareFontFamily{OMS}{rsfs}{\skewchar\font'60}
\DeclareFontShape{OMS}{rsfs}{m}{n}{<-5>rsfs5 <5-7>rsfs7 <7->rsfs10 }{}
\DeclareSymbolFont{rsfs}{OMS}{rsfs}{m}{n}
\DeclareSymbolFontAlphabet{\scr}{rsfs}
\DeclareSymbolFontAlphabet{\scr}{rsfs}
\newcommand\cA{{\mathcal A}}
\newcommand\cE{{\mathcal E}}
\newcommand\cF{{\mathcal F}}
\newcommand\cG{{\mathcal G}}
\newcommand\cH{{\mathcal H}}
\newcommand\cK{{\mathcal K}}
\newcommand\cL{{\mathcal L}}
\newcommand\cN{{\mathcal N}}
\newcommand\cO{{\mathcal O}}
\newcommand\cQ{{\mathcal Q}}
\newcommand\cU{{\mathcal U}}
\newcommand\cV{{\mathcal V}}
\newcommand\cW{{\mathcal W}}
\DeclareMathOperator*{\Sym}{Sym}
\theoremstyle{plain}
\newtheorem{thm}{Theorem}[section]
\newtheorem{lemma}[thm]{Lemma}
\newtheorem{prop}[thm]{Proposition}
\newtheorem{cor}[thm]{Corollary}
\theoremstyle{remark}
\newtheorem{example}[thm]{Example}
\newtheorem{remark}[thm]{Remark}
\setlist[itemize]{leftmargin=*}
\setlist[enumerate]{leftmargin=*}
\numberwithin{equation}{section} 
\title{Foliations whose first Chern class is nef } 
\subjclass[2020]{14D05,   14E05,  37F75 }
\keywords{}
\author{Wenhao Ou}
\address{Wenhao Ou, Institute of Mathematics, Academy of Mathematics and Systems Science, Chinese Academy of Sciences, Beijing, 100190, China}
\email{wenhaoou@amss.ac.cn}
\begin{document}

\begin{abstract}
  Let $\cF$ be a foliation on a projective manifold $X$ with  $-K_\cF$ nef. 
  Assume that either $\cF$ is regular, or it has a compact leaf. 
  We prove that  there is a locally trivial fibration $f\colon X\to Y$, and  a foliation $\cG$ on $Y$ with $K_{\cG} \equiv 0$, such that $\cF = f^{-1}\cG$. 
 \end{abstract}

\maketitle
 
\tableofcontents


\section{Introduction}

From the viewpoint of minimal model program, complex projective manifolds $X$ could be classified according to the numerical behavior of their canonical classes $K_X$. When $K_X\equiv 0$, Yau proved the Calabi conjecture, which confirms the existence of K\"ahler–Einstein metrics on $X$. 
The universal cover of $X$ then admits a decomposition, known as Beauville-Bogomolov decomposition (\textit{e.g.} \cite{Beauville1983}), as the product of an affine space, of holomorphic symplectic manifolds, and of simple Calabi-Yau manifolds.  

In a more general setting, if we assume that $-K_X$ is nef, then Demailly-Peternell-Schneider  conjectured an analogue uniformization. 
The conjecture was recently settled in \cite{Cao19}  and \cite{CaoHoering2019}, based on the positivity of direct images  established in \cite{PaunTakayama2018}. 
More precisely, the universal cover of $X$ is isomorphic to the  product of an affine space, of holomorphic symplectic manifolds,  of simple Calabi-Yau manifolds, and of a rationally connected projective manifold with nef anticanonical class.

In \cite{Cao19} mentioned above,  the Albanese morphism   $\mathrm{alb}_X \colon X \to \mathrm{Alb}(X)$ was investigated. 
When $-K_X$ is nef, such a morphism is known to be a fibration after \cite{Zhang1996}. 
It was also proved   that $\mathrm{alb}_X$ is semistable in codimension one  (see \cite{Zhang2005})  and   equidimensional (see \cite{LTZZ2010}).  
In particular, the relative canonical class $K_{X/\mathrm{Alb}(X)}$ is the same as $K_{\cF}$, the canonical class  of the foliation $\cF$ induced by $\mathrm{alb}_X$. 
Since $\mathrm{Alb}(X)$ is an abelian variety, the nefness of $-K_X$ is then transposed to the nefness of   $-K_{\cF}$.

Under such   consideration, it is natural to expect similar structural results in the context of foliations on projective manifolds. 
Indeed, foliations with zero canonical class have already been broadly studied (\textit{e.g.} \cite{Touzet2008},  \cite{PereiraTouzet2013}, \cite{LorayPereiraTouzet2018}, \cite{Druel2021},  \cite{DruelOu2019} and \cite{DPPT21}).  
Foliations with nef anticanonical class have been previously investigated as well (\textit{e.g.} \cite{Druel2017}, \cite{Druel2019} ,  \cite{CaoHoering2019} and \cite{CCM19}). Particularly, for regular foliations with semipositive anticanonical class, Druel proved in \cite{Druel2019} that one can reduce the problem to the case of   foliations  with zero canonical class. 
An analogue statement for foliations with nef anticanonical class was conjectured by Cao and H\"oring. 
The main objective of this paper is   to prove this conjecture.

\begin{thm}\label{thm:main-theorem}
Let $\cF$ be a foliation on a projective manifold  $X$ with  $-K_\cF$ nef. Assume that either $\cF$ is regular, or it has a compact leaf.
Then  there is a locally trivial fibration $f\colon X\to Y$ with rationally connected   fibers. 
Moreover, there is  a foliation $\cG$ on $Y$ with $K_{\cG} \equiv 0$ such that $\cF = f^{-1}\cG$.
\end{thm}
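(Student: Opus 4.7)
The strategy I would adopt mirrors the approach used for smooth projective varieties with $-K_X$ nef in \cite{Cao19} and \cite{CaoHoering2019}: construct an MRC-type fibration associated to $\cF$, prove it is locally trivial via positivity of direct images, and then apply a structure theorem for foliations with $K\equiv 0$ to the quotient.

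\textbf{Step 1 (Rational curves tangent to $\cF$).} I would first produce a covering family of rational curves tangent to $\cF$. In the regular case this follows from a Bogomolov--McQuillan type argument, using nefness of $-K_\cF$ to ensure the deformation space of a tangent rational curve has the expected positive dimension, as exploited by Druel in \cite{Druel2019}. In the compact-leaf case, on a compact leaf $L$ one has $\cF|_L = T_L$ and $-K_L = -K_\cF|_L$ nef; applying the results of \cite{Cao19} to $L$ produces rational curves in $L$, which may then be spread to neighboring leaves by a Reeb-type stability argument that exploits the local structure of $\cF$ along $L$.

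\textbf{Step 2 (MRC subfoliation and quotient).} Let $\cH\subseteq\cF$ be the foliation generated by these families (the ``rationally connected part'' of $\cF$). Then $\cH$ is algebraically integrable with rationally connected general leaves, and therefore defines a fibration $f\colon X \to Y$. Defining $\cG$ on $Y$ by $\cF = f^{-1}(\cG)$ gives the desired quotient foliation. Already here one must verify that $Y$ can be taken smooth and that $f$ is a morphism (not merely a rational map); the compact-leaf hypothesis in the non-regular case is what prevents the MRC quotient from leaving $X$.

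\textbf{Step 3 (Local triviality).} This is the main analytic obstacle. Once $f$ has rationally connected fibers $F$ with $-K_F = -K_\cF|_F$ nef, one wants to conclude $f$ is locally trivial. I would try to adapt \cite{Cao19}: first prove that $f$ is equidimensional (here, unlike the Albanese setting, there is no automatic equidimensionality and one must rule out fiber degenerations using nefness of $-K_\cF$ together with Bertini-type slicing of the foliation); then appeal to the positivity of direct images of \cite{PaunTakayama2018} applied to a suitable twist of $-K_{X/Y}$, to obtain flatness of the Hodge-theoretic data. Equidimensionality is the step I expect to be the hardest, since the foliation $\cF$ can contain the fibers of $f$ with multiplicities, and the argument in \cite{LTZZ2010} does not transfer verbatim.

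\textbf{Step 4 ($K_\cG\equiv 0$).} Using the canonical bundle formula $K_\cF = K_\cH + f^*K_\cG$ and that $-K_\cH$ is big on a general leaf (by rational connectedness), pushing forward on $f$ yields pseudo-effectivity, and in fact nefness, of $-K_\cG$. By maximality of $\cH$, the foliation $\cG$ admits no algebraically integrable subfoliation with rationally connected leaves. The structure results of \cite{Druel2021} and \cite{DruelOu2019} on foliations with nef anticanonical class and no rationally connected part then force $K_\cG \equiv 0$, completing the proof.
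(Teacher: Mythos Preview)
Your overall architecture---MRC subfoliation $\to$ local triviality $\to$ $K_\cG\equiv 0$---matches the paper's, but two load-bearing pieces are missing or misattributed.

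First, you never establish the numerical identity $K_{\cF_{rc}}\equiv K_\cF$ (equivalently $K_\cH\equiv K_\cF$). This is the content of Theorem~\ref{thm:rc-reduction}, drawn from \cite[Proposition~6.1, Lemma~6.2]{Druel2017} and \cite[Claim~4.3]{Druel2019}, and it applies in both the regular case and the compact-leaf case. It is what guarantees that $-K_\cH$ is \emph{globally} nef---not just nef along fibers---which is the hypothesis you actually need to run any direct-image positivity argument in Step~3. It also collapses Step~4 to a one-line computation: once $f$ is smooth, $K_\cF=K_\cH+f^*K_\cG$ together with $K_\cH\equiv K_\cF$ gives $f^*K_\cG\equiv 0$, hence $K_\cG\equiv 0$. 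Your Step~4 instead tries to deduce $K_\cG\equiv 0$ from ``$-K_\cG$ nef and no rationally connected part'' via \cite{Druel2021} and \cite{DruelOu2019}; but those papers \emph{assume} $c_1=0$ and study the resulting structure, they do not prove $c_1=0$ from such hypotheses. Moreover, your claimed pseudoeffectivity of $-K_\cG$ from ``pushing forward'' is unjustified: $K_\cH$ is typically \emph{not} pseudoeffective (think $\cH=T_{\mathbb{P}^n}$), so $-f^*K_\cG=-K_\cF+K_\cH$ has no obvious sign.

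Second, Step~3 underestimates how different the foliated situation is from \cite{Cao19}. The paper does \emph{not} first prove equidimensionality and then run a direct-image argument. It proves a splitting theorem first (Theorem~\ref{thm:decomposition}): for an algebraically integrable $\cH$ with $-K_\cH$ nef and a compact leaf, one has $T_X=\cH\oplus\cG'$. Equidimensionality then comes for free from \cite[Proposition~2.5]{HwangViehweg2010}, and smoothness from \cite[Corollary~2.11]{Hoering07}. The splitting is the technical heart: one constructs a tailored semistable reduction (Lemma~\ref{lemma:good-semistable-reduction}), shows the weakly positively curved direct images $\cV_p$ on the base descend---through the Galois cover---to numerically flat bundles $\cE_p$ on $X$ itself (Lemmas~\ref{lemma:descend-V_p}--\ref{lemma:E_p-Num-flat}), and then uses the canonical flat connection of Theorem~\ref{thm:numflat=flat} to build a transverse foliation. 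The descent to $X$, rather than to the Chow base, is the new idea that lets one exploit the smoothness of $X$ when neither the family of leaves nor its semistable model is smooth; your ``adapt \cite{Cao19}'' does not see this step.
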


   
We also obtain the following corollary.

\begin{cor}\label{cor:regularity}
Let $\cF$ be a foliation on a projective manifold $X$ with $-K_{\cF}$ nef. Assume that $\cF$ has a compact leaf. Then it is regular and there is a regular foliation $\cE$ such that $T_X=\cF\oplus \cE$. 
\end{cor}

A key  ingredient for Theorem \ref{thm:main-theorem} is the following theorem, which reduces the situation to the case of  algebraically integrable foliations.  
It follows directly from a series of  Druel's works.

\begin{thm}\label{thm:rc-reduction}
Let $\cF$ be a foliation on a projective manifold $X$ with nef anticanonical class $-K_\cF$. Assume that either $\cF$ is regular or it has a compact leaf.  Then the following properties hold.
\begin{enumerate}
\item The algebraic part $\cF_{alg} \subseteq \cF$ has a compact leaf, and is induced by a dominant  almost holomorphic rational map $\varphi \colon X\dashrightarrow Y$.
\item Let $\cF_{rc}$ be the foliation of the relative MRC fibration of $\varphi$. Then $K_{\cF_{rc}} \equiv K_{\cF_{alg}} \equiv K_{\cF}$.
\end{enumerate}
\end{thm}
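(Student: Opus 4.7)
Since the theorem is essentially a compilation of three results of Druel, the plan is to assemble them correctly and, most importantly, to verify that the weakening of Druel's hypothesis from semipositivity to nefness of $-K_\cF$ is harmless.

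For part (1), the strategy is a direct two-step appeal to \cite[Proposition 6.1]{Druel2017} and \cite[Lemma 6.2]{Druel2017}. The first gives a compact leaf for $\cF_{alg}$ under exactly the hypotheses of the present theorem (either $\cF$ regular or a compact leaf for $\cF$, plus $-K_\cF$ nef), while the second packages the resulting algebraic family of compact leaves into an almost holomorphic rational map $\varphi \colon X \dashrightarrow Y$ whose general fibers are the closures of leaves of $\cF_{alg}$. Neither invokes anything stronger than nefness of $-K_\cF$, so no adjustment is needed here.

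For part (2), I would reproduce the argument of \cite[Claim 4.3]{Druel2019}, organized around the tower of foliations $\cF_{rc} \subseteq \cF_{alg} \subseteq \cF$. The quotient $\cF_{alg}/\cF_{rc}$ descends to a foliation on the MRC base of the leaves of $\cF_{alg}$ whose general fibers contain no rational curves, while $\cF/\cF_{alg}$ is by construction purely transcendental. In both cases the standard positivity machinery applies: the Campana--P\u{a}un type theorems together with the pseudo-effectivity of direct images of relative canonical sheaves from \cite{PaunTakayama2018} show that $K_{\cF_{alg}/\cF_{rc}}$ and $K_{\cF/\cF_{alg}}$ are pseudo-effective. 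Since these classes add up (via the conormal exact sequences of the tower) to the difference $K_\cF - K_{\cF_{rc}}$, and $-K_\cF$ is nef, an intersection-theoretic argument against a general complete intersection curve forces both quotients to be numerically trivial, yielding $K_{\cF_{rc}} \equiv K_{\cF_{alg}} \equiv K_\cF$.

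The main obstacle, and the only nontrivial bookkeeping, is that \cite[Claim 4.3]{Druel2019} is stated under the stronger hypothesis that $-K_\cF$ is semipositive. The task is therefore to trace Druel's proof line by line and verify that every appeal to positivity of $-K_\cF$ is really an appeal to nefness, or else to pseudo-effectivity of direct images that holds under nefness alone. As the author already observes in the paragraph preceding the theorem, this verification goes through, and no new geometric input is needed to conclude.
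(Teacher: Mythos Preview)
Your proposal is correct and matches the paper's approach exactly: the paper does not give an independent proof but simply cites \cite[Proposition 6.1]{Druel2017}, \cite[Lemma 6.2]{Druel2017}, and \cite[Claim 4.3]{Druel2019}, together with the observation that the proof of the latter uses only nefness of $-K_\cF$ rather than semipositivity. Your expanded sketch of Druel's argument for part (2) is accurate and faithful to the original.
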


For algebraically integrable foliations,   we will prove the  following statements, which extend the results in \cite{CaoHoering2019} and \cite{CCM19}.  
Together with Theorem \ref{thm:rc-reduction}, they will imply Theorem \ref{thm:main-theorem}.

\begin{thm}\label{thm:decomposition}
Let $\cF$ be an algebraically integrable foliation on a projective manifold $X$ with $-K_{\cF}$ nef. Assume that $\cF$ has a compact leaf. Then there is a foliation $\cG$ on $X$ such that $T_X=\cF \oplus \cG$. 
\end{thm}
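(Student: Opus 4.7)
My plan is to pass from the algebraically integrable foliation $\cF$ to a smooth equidimensional fibration $f\colon X \to Y$, and then produce the complementary foliation $\cG$ as the horizontal distribution of a flat holomorphic Ehresmann connection coming from positivity of direct images.

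First, I would invoke the results of Druel (used already in Theorem \ref{thm:rc-reduction}, namely \cite[Proposition 6.1 and Lemma 6.2]{Druel2017}) to upgrade $\cF$ to a surjective morphism $f\colon X\to Y$ onto a normal projective variety, with compact fibers equal to the leaves. Next, I would show $f$ is smooth: equidimensionality is already in Druel's work, while smoothness is forced because any multiple fiber or ramification divisor would contribute an effective summand to $K_\cF - K_{X/Y}$, contradicting the nefness of $-K_\cF$. With $f$ smooth, the foliation $\cF$ coincides with $T_{X/Y}$, and one has the exact sequence
\begin{equation*}
0 \longrightarrow \cF \longrightarrow T_X \longrightarrow f^{\ast} T_Y \longrightarrow 0.
\end{equation*}

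The main task is then to split this sequence by an involutive subbundle $\cG$. I would apply the Paun-Takayama positivity theorem \cite{PaunTakayama2018} to the direct images $f_{\ast} \omega_{X/Y}^{\otimes m}$; combined with the nefness of $-K_{X/Y}$, this should squeeze curvature to zero and produce a hermitian flat structure, in the style of \cite{CaoHoering2019} and \cite{CCM19}. From this one can conclude that the family is locally isotrivial, and hence locally trivial (using Fujiki-Schumacher-type rigidity together with compactness of fibers). The local trivializations assemble into a holomorphic Ehresmann connection whose horizontal distribution is the desired complement $\cG$, with integrability automatic from the flatness of the connection.

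The hardest step will be the flatness/local triviality argument in the absence of either of the two extremes of the Beauville-Bogomolov classification: since the fibers of $f$ are neither assumed to be Calabi-Yau nor rationally connected, one cannot directly appeal to the respective structure theorems. Instead one needs a hybrid argument that simultaneously exploits the semipositivity of $f_{\ast} \omega_{X/Y}^{\otimes m}$ and the sign constraint $-K_{X/Y} \geq 0$ to pin down curvature vanishing for all relevant direct image sheaves, and then lifts this numerical information to a genuinely holomorphic splitting of the tangent sequence on $X$.
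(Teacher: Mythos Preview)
Your strategy has a fatal gap at the second step: the fibration $f$ need not be smooth. Example~\ref{example:multiple-fibers} in the paper exhibits an algebraically integrable foliation $\cF$ on a smooth surface with $-K_\cF$ nef and compact (elliptic) leaves, whose induced fibration $f\colon X\to \mathbb{P}^1/G$ has multiple fibers. Your argument that ``any multiple fiber would contribute an effective summand to $K_\cF - K_{X/Y}$, contradicting nefness of $-K_\cF$'' has the sign backwards: for an equidimensional fibration one has $K_\cF = K_{X/Y} - R$ with $R\ge 0$, so $-K_\cF = -K_{X/Y} + R$, and nefness of $-K_\cF$ puts no constraint forcing $R=0$. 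In the paper, equidimensionality of $f$ is a \emph{consequence} of the splitting $T_X = \cF\oplus\cG$ (Corollary~\ref{cor:structure}, via \cite[Proposition~2.5]{HwangViehweg2010}), not an input; and smoothness only follows under the additional hypothesis that the leaves are rationally connected.

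There is a second, related gap in your first step: Druel's results in \cite{Druel2017} do not give you a morphism $f\colon X\to Y$. A priori one only has the family of leaves $f\colon U\to V$ together with a birational morphism $e\colon U\to X$, and the whole difficulty is that $e$ has exceptional divisors. The paper's proof never assumes $e$ is an isomorphism. Instead it builds a carefully tailored semistable reduction $\Gamma\to Y$ with a Galois $G$-action (Lemma~\ref{lemma:good-semistable-reduction}), shows that certain twisted direct image sheaves $\cV_p$ on $Y$ are weakly positively curved with $c_1$ supported in the exceptional locus, descends them through the Galois cover to numerically flat bundles $\cE_p$ on $X$ (this descent step, Lemma~\ref{lemma:descend-V_p}, requires the precise control over the discriminant afforded by the construction), and then uses the resulting flat structure to produce a locally trivial model $Z\to Y$ whose Ehresmann connection gives $\cG$. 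Transversality of $\cF$ and $\cG$ is checked only in codimension one, again using the specific properties of the semistable reduction. Your outline bypasses all of this by assuming smoothness, which is exactly what is not available.
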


\begin{cor}\label{cor:structure}
Let $\cF$ be an algebraically integrable foliation on a projective manifold $X$ with $-K_{\cF}$ nef. Assume that $\cF$ has a compact leaf. Then it is induced by an equidimensional fibration $f\colon X \to Y$. If moreover $\cF$ is rationally connected, then $f$ is   a locally trivial family. 
\end{cor}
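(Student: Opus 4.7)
The plan is to apply Theorem~\ref{thm:decomposition} to obtain a regular structure on $\cF$, promote the associated family of algebraic leaves to a genuine morphism, and then invoke the local triviality criterion of Cao--H\"oring.

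First, Theorem~\ref{thm:decomposition} yields a splitting $T_X = \cF \oplus \cG$. In particular $\cF$ is a subbundle of $T_X$ of constant rank $r$, so it is a regular foliation and every leaf is smooth of dimension $r$. Combined with algebraic integrability, the leaves form an algebraic family: let $C$ be the irreducible component of $\Chow{X}$ containing the class $[L_0]$ of the compact leaf, with universal family $\pi\colon \cU_C \to C$ and evaluation map $e\colon \cU_C \to X$. The key step is to show that $e$ is an isomorphism. Regularity of $\cF$ together with the splitting $T_X = \cF\oplus\cG$ implies that small algebraic deformations of $L_0$ in $X$ stay tangent to $\cF$ and hence yield compact leaves parameterized by $C$; thus the image of $e$ contains a neighborhood of $L_0$ and is also closed by properness of $\pi$. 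Since distinct leaves of $\cF$ are disjoint and $\cF$ is regular, $e$ is bijective on points and its differential is fiberwise an isomorphism, so $e$ itself is an isomorphism. The composition $f = \pi\circ e^{-1}\colon X \to Y := C$ is then a smooth morphism whose fibers are exactly the leaves of $\cF$, and in particular $f$ is equidimensional.

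For the second assertion, assume that $\cF$ is rationally connected; then the fibers of $f$, being leaves of $\cF$, are rationally connected. Since $f$ is smooth and $-K_{X/Y} = -K_\cF$ is nef, the main theorem of \cite{CaoHoering2019} (see also \cite{CCM19}) applies directly to conclude that $f$ is a locally trivial family in the analytic topology.

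The main obstacle I anticipate is the Reeb-type stability step: one must verify in the algebraic category that every leaf of $\cF$ near $L_0$ is compact and belongs to the Chow component $C$, so that $e$ is surjective. Once this is established, the induced morphism $f$ is equidimensional (in fact smooth), and the local triviality statement in the rationally connected case follows immediately from the cited theorems.
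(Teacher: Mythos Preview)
Your argument contains a genuine gap: the claim that $f=\pi\circ e^{-1}$ is a \emph{smooth} morphism is false in general. Even when $T_X=\cF\oplus\cG$ with both summands regular, the map to the leaf space of $\cF$ can have multiple fibers. The paper's own Example~\ref{example:multiple-fibers} illustrates this: for $X=(E\times\mathbb{P}^1)/G$ with $G=\mathbb{Z}/2\mathbb{Z}$ acting freely on the elliptic curve $E$ and with two fixed points on $\mathbb{P}^1$, the foliation $\cF$ in the elliptic direction satisfies $T_X=\cF\oplus\cG$, yet the induced fibration $X\to\mathbb{P}^1/G$ has two non-reduced fibers. The splitting guarantees only that each \emph{leaf} is smooth; it does not exclude nontrivial holonomy, which is precisely what produces multiplicity. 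So even granting your Reeb-stability step and that $e$ is an isomorphism, the projection $\pi\colon\cU_C\to C$ need not be smooth---you only obtain the equidimensional fibration the corollary actually asserts.

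The paper handles this more economically: once $T_X=\cF\oplus\cG$, the fact that $\cF$ is induced by an equidimensional fibration is quoted directly from \cite[Proposition~2.5]{HwangViehweg2010}, so no Chow-variety argument is needed. Smoothness in the rationally connected case is then a genuinely separate step, supplied by \cite[Corollary~2.11]{Hoering07}; only after $f$ is known to be smooth does local triviality follow (the paper uses the machinery of Section~\ref{section:semistable}, see Remark~\ref{rem:smooth-morphism}, though your citation of \cite{CaoHoering2019} would also work at that stage). The obstacle you anticipated---compactness of nearby leaves---is not the real issue; the real issue is that you have conflated ``all fibers are smooth varieties'' with ``the morphism is smooth''.
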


The paper is organized as follows. We will work over $\mathbb{C}$, the field of complex numbers, throughout. 
In Section \ref{section:foliation}, we  will recall the basic language on foliations. 
In  Section \ref{section:flat}, we  will  prove several elementary results about numerically flat vector bundles. 
In Section \ref{section:semistable}, we  will collect some powerful tools developed in  \cite{CCM19}.  
Then we will construct an appropriate semistable reduction for a foliation with nef aniticanonical class in Section \ref{section:construction}.
In the end, we will finish the proofs of the theorems in Section \ref{section:proof}.\\

\noindent \textbf{Acknowledgment} The author is very grateful to Jie Liu for inspiring conversations. He would like to thank Junyan Cao and Andreas H\"oring  for remarks and comments. He would like to express his   gratitude to St\'ephane Druel for his enormous help in improving the presentation of the paper. 
He would like to thank the referee for reading the paper carefully and providing helpful suggestions. 
The author is supported by the National Key R\&D Program of China (No. 2021YFA1002300).

\section{Foliations}\label{section:foliation}

In this section, we gather some basic properties concerning foliations on complex algebraic varieties.  
For more details on the notion of foliations, we refer to  \cite[Section 3]{AraujoDruel2014}.

A  {foliation} on  a normal variety $X$ is a coherent subsheaf $\cF\subseteq T_X$, where $T_X =(\Omega_X^1)^*$ is the reflexive tangent sheaf, such that
\begin{enumerate}
\item $\cF$ is closed under the Lie bracket,  
\item $\cF$ is saturated in $T_X$, that is,  the quotient $T_X/\cF$ is torsion-free.
\end{enumerate} 
The  {canonical class} $K_{\cF}$ of $\cF$ is any Weil divisor on $X$ such that  $\cO_X(-K_{\cF})\cong \det\cF$. 
In particular, the first Chern class $c_1(\cF)$ is equivalent to $-K_\cF$.
Let $X^\circ$ be the largest open subset of the smooth locus of $X$ such that  $\cF|_{X^\circ}$ is a subbundle of $T_{X^\circ}$.    
The singular locus of $\cF$ is defined to be $X \setminus X^\circ$. 
When $X=X^\circ$, we say that $\cF$ is a regular foliation of $X$. 
A leaf of $\cF$ is a maximal connected and immersed holomorphic submanifold $i\colon L \hookrightarrow X^\circ$  such that the differential map 
\[
\mathrm{d}i\colon T_L \to i^*\cF
\]
is an isomorphism. 
A leaf is called algebraic if it is open in its Zariski closure in $X$. 
The foliation $\cF$ is said to be  {algebraically integrable} if its leaves are algebraic.

Let $\cF$ be a codimension $q$ foliation on an $n$-dimensional normal variety
$X$. 
The normal sheaf $\cN_{\cF}$ of $\cF$ is the reflexive hull $(T_X/\cF)^{**}$.  
The $q$-th wedge product of the inclusion $\cN_{\cF}^* \to (\Omega_X^{1})^{**}$
 gives
rise to a non-zero global section $\omega \in H^0(X, (\Omega^{q}_X \otimes
\mathrm{det}\, \cN_{\cF} )^{**}),$ whose zero locus has codimension at least two in
$X$.  
 Moreover, $\omega$ is locally decomposable and integrable. 
To say that $\omega$ is locally decomposable means that, in a
neighborhood of a general point of $X$, $\omega$ decomposes as the wedge product of $q$ local $1$-forms 
$\omega = \omega_1 \wedge \cdots \wedge \omega_q$. 
To say that it is integrable means that for this local decomposition one has $\mathrm{d} \omega_i \wedge \omega = 0$  for every $i \in \{1,..., q\}$. 

The integrability condition for $\omega$ in the previous paragraph       is equivalent to the condition that $\cF$ is closed under the Lie bracket.
Conversely, let $\cL$ be a reflexive sheaf of rank  one  on $X$, and let $\omega \in H^0(X, (\Omega_X^q\otimes \cL)^{**}) $  be a global section whose
zero locus has codimension at least two in $X$. 
Suppose that $\omega$ is locally decomposable and integrable. 
Then the
kernel of the morphism $T_X  \to  (\Omega_X^{q-1} \otimes \cL)^{**}$
  given by the contraction with $\omega$ defines a foliation of codimension $q$
on $X$. 
These constructions are inverse of each other.

Let $X$ and $Y$ be normal varieties, and  let $\varphi \colon X\dashrightarrow Y$ be a dominant rational map that restricts to a morphism $f^\circ\colon X^\circ\to Y^\circ$,
where $X^\circ\subseteq X$ and  $Y^\circ\subseteq Y$ are smooth open dense subsets.
Let $\cG$ be a   foliation on $Y$.  Then the pullback foliation $\varphi^{-1}\cG$ on $X$ is induced by $(\mathrm{d}f)^{-1}(f^*\cG)$ on $X^\circ$, where 
\[\mathrm{d}f \colon T_{X^\circ} \to f^*T_{Y^\circ}\] is the differential map of $f$.

Assume that $\cF$ is an algebraically integrable foliation on a normal variety $X$. 
Then  there is a unique normal variety $V$ in the normalization of the Chow variety of $X$, whose general points parametrize  the closures of general leaves of $\cF$. 
Let $U$ be the normalization of the universal family over $V$. 
Then the natural projective fibration  $U\to V$ is called the family of leaves of $\cF$. For more details, see for example \cite[Lemma 3.2]{AraujoDruel2013}.

As a consequence, a foliation $\cF$  is  algebraically integrable if and only if there is a  dominant rational map $\varphi\colon X\dashrightarrow Y$ to a normal variety such that $\cF = \varphi^{-1}0_Y$, where $0_Y$ is the foliation by points on $Y$. 
In this case, we say that $\cF$ is the foliation induced by $\varphi$.

For foliations induced by equidimensional fibrations, we have the following  fact.

\begin{lemma}\label{lemma:basechange-canonical}
Let $\varphi \colon X\to Y$ be an equidimensional morphism between normal varieties, and $\cF$ the foliation induced. 
Let $Y' \to Y$ be a finite dominant morphism with $Y'$ normal. 
Let $X'$ be the normalization of $X\times_Y Y'$. If $\cF'$ is the foliation induced by the natural projection $X'\to Y'$, and if $p\colon X' \to X$ is the natural finite morphism, then $K_{\cF'} = p^*K_\cF$.
\end{lemma}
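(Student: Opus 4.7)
The plan is to prove $K_{\cF'}=p^*K_\cF$ by establishing the equivalent statement $\det\cF'\cong p^{[*]}\det\cF$ as reflexive rank-one sheaves on $X'$; since both sides are reflexive, it suffices to exhibit a natural isomorphism on an open subset with complement of codimension at least two. The candidate isomorphism comes from the restricted differential $dp|_{\cF'}$.

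I would start by identifying $\cF'$ with the pullback foliation $p^{-1}\cF$. Applying differentials to $\varphi\circ p=\pi\circ\varphi'$ gives $d\varphi\circ dp=d\pi\circ d\varphi'$, and since $\pi$ is generically \'etale (being finite dominant between normal varieties in characteristic zero), $d\pi$ is generically injective. Hence $\ker(d\varphi')=(dp)^{-1}(p^*\cF)$ on a dense open subset, and saturation gives $\cF'=p^{-1}\cF$. The relation $d\varphi\circ dp|_{\cF'}=0$, combined with the local freeness of $p^*(T_X/\cF)$ on the subbundle locus of $\cF$ (whose complement has codimension at least two), shows that $dp|_{\cF'}$ factors through $p^*\cF$, yielding a natural morphism $\beta\colon\cF'\to p^*\cF$.

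The technical heart is to show that $\beta$ is an isomorphism at every prime divisor $D'\subseteq X'$. By equidimensionality of $\varphi'$ (a base change of the equidimensional $\varphi$), the image $\varphi'(D')$ has codimension at most one in $Y'$. If $\varphi'(D')=Y'$, then $\eta_{D'}$ lies over the generic point of $Y'$, where $\pi$ is \'etale; hence $p$ is \'etale at $\eta_{D'}$, $dp$ is an isomorphism there, and so is $\beta$. Otherwise $\varphi'(D')=E'$ is a prime divisor; setting $D=p(D')$ and $E=\varphi(D)=\pi(E')$, I would choose formal (or \'etale) local coordinates in which $\varphi$ takes the form $(s,y_1,\ldots,y_{N-1})\mapsto(s^m,y_1,\ldots,y_{n-1})$ and $\pi$ takes the form $t\mapsto t'^{e}$. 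Using the standard description of the normalization of $\mathrm{Spec}\,\bbC[s,t']/(s^m-t'^{e})$, the local parameters on $X'$ at $\eta_{D'}$ can be taken as $(r,y_1,\ldots,y_{N-1})$ with $s=r^{e/d}\cdot(\mathrm{unit})$ and $t'=r^{m/d}\cdot(\mathrm{unit})$ for $d=\gcd(m,e)$. A direct computation then shows that $\cF$ at $\eta_D$ and $\cF'$ at $\eta_{D'}$ are both generated by the ``vertical'' coordinate fields $\partial_{y_n},\ldots,\partial_{y_{N-1}}$, and $dp$ sends each $\partial_{y_i}$ to itself; hence $\beta$ is the identity on generators and therefore an isomorphism. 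Geometrically, the ramification of $p$ occurs in the $\partial_r$ direction, transverse to $\cF'$, which is precisely why $\det\beta$ remains a unit at $\eta_{D'}$ even though $\det dp$ vanishes to order $e/d-1$.

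Once $\beta$ is an isomorphism at every codimension-one point of $X'$, taking determinants and then reflexive hulls yields the desired isomorphism $\det\cF'\cong p^{[*]}\det\cF$ on $X'$, which translates to $K_{\cF'}=p^*K_\cF$ as Weil divisor classes. The main obstacle will be the local-coordinate analysis in the vertical case: justifying the existence of \'etale-local coordinates aligning the $y$-variables on $X,X',Y,Y'$ and producing the explicit description of the normalized fiber product $X'=(X\times_YY')^\nu$ near $\eta_{D'}$. Once this local structure is in place, the transversality of the ramification of $p$ to $\cF'$ makes the isomorphism essentially automatic.
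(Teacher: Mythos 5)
Your overall strategy is correct: reduce to showing $\det\cF'\cong p^{[*]}\det\cF$, construct $\beta=dp|_{\cF'}\colon\cF'\to p^*\cF$, and verify that $\beta$ is an isomorphism at every codimension-one point of $X'$. The identification $\cF'=p^{-1}\cF$, the factorization of $dp|_{\cF'}$ through $p^*\cF$, and the treatment of divisors $D'$ dominating $Y'$ (where $p$ is \'etale, since the \'etale locus of $p$ contains $\varphi^{-1}(\text{\'etale locus of }\pi)$) are all sound. Note also that the paper states this lemma without proof, so there is no ``paper's argument'' to compare against; what follows is an assessment of your proposal on its own terms.

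The genuine soft spot is the one you flag yourself: in the vertical case, the split local model $\varphi\colon(s,y_*)\mapsto(s^m,y_*)$ together with the explicit description of the normalized fiber product requires choosing, after completion at $\eta_D$, a coefficient field of $\widehat{\cO}_{X,\eta_D}$ containing $\varphi^\#(\kappa(E))$, adjusting $s$ to eliminate the unit in $\varphi^\#(t)=us^m$, and aligning this with analogous choices for $Y'$ and $X'$. All of this is doable in characteristic zero via the Cohen structure theorem, but it is not automatic and the details are exactly where one would have to work. As written, the local computation is a plausibility argument rather than a proof.

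Here is a coordinate-free way to close the gap that collapses both cases and removes the local model entirely. At a codimension-one point $\eta_{D'}$, $X'$ and $X$ are smooth, $p$ is flat, and $\cF=\ker d\varphi$ (already saturated, since $T_X/\ker d\varphi\hookrightarrow\varphi^*T_Y$ is torsion-free); likewise $\cF'=\ker d\varphi'=(dp)^{-1}(p^*\cF)$ at $\eta_{D'}$, using that $(\varphi')^*d\pi$ is injective there. Injectivity of $\beta$ is immediate from injectivity of $dp$. For surjectivity, take a local section $\xi$ of $\cF$ near $\eta_D$, regarded as a $\mathbb{C}$-derivation of $\cO_X$. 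Since $\xi\in\ker d\varphi$, it kills $\varphi^\#\cO_Y$, so $\tilde\xi(a\otimes b')\defeq\xi(a)\otimes b'$ is a well-defined derivation of $\cO_X\otimes_{\cO_Y}\cO_{Y'}$ (the tensor relation $a\varphi^\#(b)\otimes b'=a\otimes\pi^\#(b)b'$ is preserved precisely because $\xi(\varphi^\#(b))=0$). The ring $\cO_{X,\eta_D}\otimes_{\cO_{Y,\eta_E}}\cO_{Y',\eta_{E'}}$ is reduced (it is flat over a DVR with geometrically reduced fibers in characteristic zero), so by Seidenberg's theorem on extending derivations to normalizations, $\tilde\xi$ extends to a derivation $\xi'$ of $\cO_{X'}$ near $\eta_{D'}$, and by construction $dp(\xi')=\xi\otimes1$. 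Hence $p^*\cF\subseteq dp(T_{X'})$ at $\eta_{D'}$, so $\beta(\cF')=dp(T_{X'})\cap p^*\cF=p^*\cF$, giving the desired isomorphism. Taking determinants then yields $K_{\cF'}=p^*K_\cF$ exactly as you intended, and the geometric picture you describe --- the ramification of $p$ is transverse to $\cF'$ --- is precisely what this argument makes precise.
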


\begin{proof}
Let $D$ be any prime divisor in $X$. 
We only need to show that  $K_{\cF'} = p^*K_{\cF}$ around a general point of $p^{-1}(D)$. 

If $D$ is not contained in the branched locus of $p$, then $p$ is locally a diffeomorphism 
around a general point of $p^{-1}(D)$.  
Thus  $K_{\cF'} = p^*K_{\cF}$ around a general point of $p^{-1}(D)$. 
 
Assume that $D$ is contained in the branched locus of $p$. 
In this case, $D$ must be $\cF$-invariant by the construction of $X'$. 
We can then  apply  \cite[Lemma 3.4]{Druel2021} to show that $K_{\cF'} = p^*K_{\cF}$ around a general point of $p^{-1}(D)$.  
This completes the proof of the lemma.
\end{proof}

Let $\cF$ be a  foliation   on a normal variety $X$. There exist a normal variety $Y$, unique up to birational equivalence, a dominant rational map with connected fibers $\varphi\colon X\dashrightarrow Y$, and a foliation $\cG$ on $Y$,  such that the following properties hold.
\begin{enumerate}
\item $\cG$ is purely transcendental, that is, there is no positive-dimensional algebraic subvariety through a general point of $Y$ which is tangent to $\cG$.
\item $\cF = \varphi^{-1}\cG$.
\end{enumerate} 
The foliation induced by   $\varphi$ is called the algebraic part of $\cF$. 
For more details, see \cite[Section 2.3]{LorayPereiraTouzet2018}.

We will need the following lemma for the proofs of the main theorems.

\begin{lemma}\label{lemma:descend-splitting}
Let $ {f} \colon  {X} \to  {Y}$ be a smooth dominant morphism between complex manifolds. 
Let $\cH$ be a foliation on $ {X}$  whose rank is equal to $\dim  {Y}$. 
Let $q\colon  {Y}'\to  {Y}$ be a finite surjective morphism with $Y'$ smooth, and let $ {f}'\colon  {X}' \to  {Y}'$ be the base change of $ {f}$ over $ {Y}'$. 
Let $\cH'= {f}^{-1}\cH$ be the pullback foliation. 
If $\cH'$ is regular and transverse to every fiber of $ {f}'$, then $\cH$ is regular and  transverse to every fiber of $ {f}$.
\begin{equation*} 
\begin{tikzcd}[column sep=large, row sep=large]
   {X}' \ar[d," {f}'"]     \ar[r," {p}"]  &  {X} \ar[d," {f}"]  \\
   {Y}' \ar[r," {q}"]&  {Y} 
\end{tikzcd}
\end{equation*}
\end{lemma}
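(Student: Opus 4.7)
The plan is to show the sheaf morphism $df|_{\cH}: \cH \to f^*T_Y$ is an isomorphism, which immediately yields a splitting $T_X = \cH \oplus T_{X/Y}$, i.e.\ regularity of $\cH$ and transversality to the fibers of $f$. As preliminaries I would verify two facts: (i) $\cH$ is reflexive, because $T_X/\cH$ is torsion-free by saturation and the torsion sheaf $\cH^{**}/\cH$ embeds into it, so it must vanish; (ii) $df|_{\cH}$ is injective, because over the open complement of $f^{-1}(\mathrm{Br}(q))$ the map $p$ is \'etale, so the hypothesis descends to show $\cH$ is regular and transverse to $f$ there, and in particular $\cH \cap T_{X/Y}$ vanishes on a dense open, which forces it to vanish globally since it is a torsion-free subsheaf of $\cH$.

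The core step is to show the determinant morphism $\det(df|_{\cH}): \det\cH \to f^*\det T_Y$ of invertible sheaves on the smooth variety $X$ (using that reflexivity of $\cH$ makes $\det \cH := (\wedge^n\cH)^{**}$ a line bundle, $n = \dim Y$) is nowhere vanishing; write $D_s$ for its vanishing divisor. Taking top exterior powers of the commutative square
\begin{equation*}
p^*(df|_{\cH}) \circ dp|_{\cH'} = (f')^*dq \circ df'|_{\cH'}
\end{equation*}
coming from $f \circ p = q \circ f'$, and using that $\det(df'|_{\cH'})$ is nowhere vanishing (by the hypothesis that $\cH' \cong (f')^*T_{Y'}$) while the vanishing divisor of $(f')^*\det(dq)$ equals $(f')^*R$, where $R$ is the ramification divisor of $q$ on $Y'$, yields the identity of effective divisors on $X'$
\begin{equation*}
D_1 + p^*D_s = (f')^*R,
\end{equation*}
where $D_1$ is the effective vanishing divisor of $\det(dp|_{\cH'})$. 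In particular $p^*D_s \leq (f')^*R$.

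To conclude $D_s = 0$ I compare multiplicities at prime divisors of $X'$. Any irreducible component $D_i$ of $D_s$ must lie in $f^{-1}(\mathrm{Br}(q))$, hence is a component of $f^{-1}(B_j)$ for some branch component $B_j \subseteq Y$ of $q$; above each component $R_{jk}$ of $q^{-1}(B_j)$ with ramification index $e_{jk} \geq 2$, the base-change formula makes $p^*D_i$ have multiplicity $e_{jk}$ along $(f')^{-1}(R_{jk})$ while $(f')^*R$ has multiplicity $e_{jk}-1$, so the coefficient $k_i$ of $D_i$ in $D_s$ satisfies $k_i\, e_{jk} \leq e_{jk}-1$ and must equal $0$. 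Thus $D_s = 0$, so $\det(df|_{\cH})$ is an isomorphism of line bundles. Combined with reflexivity of $\cH$ and local freeness of $f^*T_Y$, an extension argument across the codimension-$\geq 2$ locus where $\cH$ fails to be locally free promotes this into the isomorphism $df|_{\cH}: \cH \xrightarrow{\sim} f^*T_Y$ of sheaves, completing the proof.

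The main obstacle I expect is the divisorial bookkeeping at ramification components: namely, verifying that $p^*D_i$ has multiplicity exactly $e_{jk}$ along $(f')^{-1}(R_{jk})$ requires a careful local analysis of $p$ as the base change of $q$ by the smooth morphism $f$, and one must also ensure the canonical identification $(\wedge^n p^*\cH)^{**} \cong p^*\det\cH$ so that the line-bundle identity makes sense as an equality of divisors.
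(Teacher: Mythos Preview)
Your proof is correct and follows a genuinely different route from the paper's. The paper argues via differential forms: after shrinking to where $\cH$ is a subbundle, it observes that the branch locus of $p$ (being a union of fibers of $f$) cannot be $\cH$-invariant, then invokes \cite[Lemma 3.4]{Druel2021} to get $p^*\cN_\cH \cong \cN_{\cH'}$, so a local defining form $\omega$ for $\cH$ pulls back to a nowhere-vanishing form $\omega' = p^*\omega$ defining $\cH'$; transversality is then checked by contracting with vertical vector fields. Your approach instead stays on the tangent side and extracts everything from the single divisor identity $D_1 + p^*D_s = (f')^*R$ together with the elementary inequality $k\,e \le e-1$, which is self-contained and avoids the external citation.

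Two remarks on the bookkeeping you flag. First, the map $dp|_{\cH'}\colon \cH' \to p^*\cH$ is only obviously defined where $\cH$ is a subbundle of $T_X$: there $p^*\cH$ is saturated in $p^*T_X$, and since $\cH'$ is the saturation of $(dp)^{-1}(p^*\cH)$, any local section $v'$ of $\cH'$ satisfies $g\cdot dp(v') \in p^*\cH$ for some nonzero $g$, hence $dp(v') \in p^*\cH$ by saturation. As $\cH$ is reflexive this covers a codimension-$\ge 2$ open set, which is all you need for the divisor identity. Second, your multiplicity claim is indeed immediate from the fact that $p$ is the base change of $q$ along the smooth morphism $f$: for a branch component $B_j$ and a component $R_{jk}$ of $q^{-1}(B_j)$ with index $e_{jk}$, the divisor $p^*D_i$ has multiplicity $e_{jk}$ along the (unique) component of $p^{-1}(D_i)$ lying over $R_{jk}$, while $(f')^*R$ has multiplicity $e_{jk}-1$ there since $f'$ is smooth. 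Choosing any $R_{jk}$ with $e_{jk}\ge 2$ (which exists because $B_j$ is a branch component) forces $k_i = 0$.
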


\begin{proof} 
Set  $n=\dim X$ and $d=\dim  {Y}$.   
We  first notice that it suffices to prove the assertion in codimension one of $X$. 
Indeed, let $U\subseteq X$ be an  open subset  whose complement  has  codimension  at least two in $X$. 
If we can prove that $\cH|_U$ is transverse to $T_{X/Y}|_U$, that is $T_U = T_{X/Y}|_U \oplus \cH|_U$, 
then we must have $T_X = T_{X/Y} \oplus \cH$, for $\cH$, $T_{X/Y}$ and $T_X$ are all reflexive sheaves (see \cite[Proposition 1.6]{Hartshorne1980}).   
Since $f$ is a smooth morphism, $T_{X/Y}$ is locally free. 
Thus  this decomposition of $T_X$  will imply that $\cH$ is  regular and transverse to all fibers of $f$.

Hence by shrinking $X$,  we may  assume that $\cH$ is a subbundle of $T_X$. 
Let $D\subseteq X$ be the branched locus of $p$. 
Then by assumption, we see that none of the components of $D$ is   $\cH$-invariant. 
Therefore, thanks to \cite[Lemma 3.4]{Druel2021},  we get $p^*(\mathrm{det}\,  \cN_{\cH} )\cong \mathrm{det}\, \cN_{\cH'}$, 
where $\cN_{\cH}$ and $\cN_{\cH'}$ are the normal bundles of $\cH$ and $\cH'$ respectively.  

Let $x\in X$ be a point. We will show that $T_{X/Y}$ and $\cH$ are transverse at $x$. 
The problem is now local around $x$. 
By shrinking $X$ around $x$, we may assume that   there is a nowhere vanishing  holomorphic $(n-d)$-form $\omega$ such that $\cH$ is the kernel of the contraction by $\omega$. 
Then the conclusion of the previous paragraph implies that $\omega'=p^*\omega$ is nowhere vanishing.

It remains to show that, for every local smooth vector field $v$ on $X$, nowhere vanishing and with values in $T_{X/Y}$, the contraction $v\lrcorner \omega$ is nowhere vanishing. 
Since $f'$ is the  base change of $f$, it follows that there is a   smooth vector field $v'$ on $X'$, nowhere vanishing and with values in $T_{X'/Y'}$,  such that $\mathrm{d}p(v') = v$.  
Moreover, by assumption, $v'\lrcorner \omega'$ is nowhere vanishing.  
Since $v'\lrcorner \omega' = p^*(v\lrcorner \omega)$, it follows that  
$v\lrcorner \omega$ is nowhere vanishing. 
This completes the proof.
\end{proof}

\section{Numerically flat vector bundles on complex manifolds}\label{section:flat}

The notion of numerically flat vector bundles was introduced in \cite{DemaillyPeternellSchneider1994}. 
It plays an important role in the proof of locally trivial fibrations. 
A vector bundle $\cE$ on a projective manifold is numerically flat  if  both $\cE$ and $\cE^*$ are nef. 
We recall the following criterion of numerically flat vector bundles for reflexive sheaves (see  \cite[Corollary 3]{BandoSiu1994} and  \cite[Theorem IV.4.1]{Nakayama2004})

\begin{thm}
\label{thm:num-flat-criterion-1}
Let $X$ be a projective manifold of dimension $n$ and $\cE$ a reflexive sheaf on $X$.  Let $H$ be an ample divisor in $X$. Then $\cE$ is a numerically flat vector bundle if and only if the following conditions hold.
\begin{enumerate}
\item $\cE$ is $H$-semistable.
\item $c_1(\cE)\cdot H^{n-1} = c_1(\cE)^2\cdot H^{n-2}= c_2(\cE)\cdot H^{n-2} = 0$. 
\end{enumerate} 
\end{thm}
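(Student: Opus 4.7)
The plan is to prove both directions separately. The ``only if'' direction is immediate from the fundamental results of Demailly--Peternell--Schneider on nef bundles: a numerically flat sheaf is automatically a locally free bundle all of whose Chern classes vanish numerically (condition (2) follows at once), and any quotient of a nef bundle is nef, so every torsion-free quotient has $c_1\cdot H^{n-1}\ge 0$; together with $c_1(\cE)\cdot H^{n-1}=0$ this gives the slope-semistability in (1).

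For the substantive ``if'' direction, I would follow the strategy that combines the Bando--Siu extension of the Kobayashi--Hitchin correspondence with Bogomolov's inequality. First, because $\cE$ is $H$-semistable with slope zero, take a Jordan--H\"older filtration
\[
0=\cE_0\subset \cE_1\subset\cdots\subset\cE_m=\cE
\]
whose graded pieces $\cG_i=\cE_i/\cE_{i-1}$ are $H$-stable reflexive sheaves of slope zero, so $c_1(\cG_i)\cdot H^{n-1}=0$. Next, apply Bogomolov's inequality $\Delta(\cG_i)\cdot H^{n-2}\ge 0$ for each stable factor, where $\Delta=2r c_2-(r-1)c_1^2$. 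The additivity of the discriminant under extensions produces an expression of the form
\[
\Delta(\cE)\cdot H^{n-2}=\sum_i \tfrac{r}{r_i}\Delta(\cG_i)\cdot H^{n-2} + (\text{nonnegative term involving }c_1\text{'s}),
\]
so the assumed vanishing of $c_1(\cE)^2\cdot H^{n-2}$ and $c_2(\cE)\cdot H^{n-2}$ forces $\Delta(\cG_i)\cdot H^{n-2}=0$ for every $i$. Then the Bando--Siu theorem, applied to each stable reflexive $\cG_i$ with $c_1\cdot H^{n-1}=0$ and vanishing discriminant, shows that $\cG_i$ is in fact locally free and admits a projectively flat Hermite--Einstein metric; combined with $c_1(\cG_i)\cdot H^{n-1}=0$ one gets a flat Hermitian metric with unitary holonomy, so each $\cG_i$ is numerically flat.

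Finally, numerical flatness propagates under extensions: by the DPS theorem an extension of numerically flat bundles by numerically flat bundles is again numerically flat (and in particular locally free, since extensions of locally free sheaves are locally free). Induction up the filtration yields that $\cE$ itself is numerically flat. The main obstacle is the analytic input at the heart of step three, namely justifying Bando--Siu on a stable \emph{reflexive} sheaf rather than a bundle, and verifying that the vanishing of the discriminant together with stability indeed upgrades the projectively flat Hermite--Einstein metric to a genuinely flat unitary metric; once that is in place the rest of the argument is essentially formal.
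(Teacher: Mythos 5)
The paper does not give a proof of this theorem; it simply cites Bando--Siu \cite[Corollary 3]{BandoSiu1994} and Nakayama \cite[Theorem IV.4.1]{Nakayama2004}, treating the statement as known. Your sketch is a correct reconstruction of the standard argument underlying those references: reduce via Mehta--Ramanathan to a general complete-intersection surface, take a Jordan--H\"older filtration with $H$-stable pieces of slope zero, use the Bogomolov inequality together with the Hodge index theorem to kill the discriminants of the graded pieces, invoke Bando--Siu to conclude each stable piece is locally free and Hermitian flat, and finally close the induction using the fact (due to DPS) that an extension of numerically flat bundles by numerically flat bundles is numerically flat.

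One expository slip worth flagging: the key display you write is not a consequence of ``additivity of the discriminant'' alone. The identity
\[
\Delta(\cE)\cdot H^{n-2}\;=\;\sum_i \tfrac{r}{r_i}\,\Delta(\cG_i)\cdot H^{n-2}\;-\;\sum_i\tfrac{r}{r_i}\,c_1(\cG_i)^2\cdot H^{n-2}\;+\;c_1(\cE)^2\cdot H^{n-2}
\]
holds as a formal Chern class computation, but the nonnegativity of the middle sum is \emph{not} formal: it is precisely the Hodge index theorem, applied to each $c_1(\cG_i)$ (which is orthogonal to $H$ because the $\cG_i$ have slope zero), that makes each $-c_1(\cG_i)^2\cdot H^{n-2}\ge 0$. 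It is this, combined with the hypothesis $c_1(\cE)^2\cdot H^{n-2}=0$ and Bogomolov on each piece, that forces all terms to vanish; and the equality case of Hodge index is also what upgrades $c_1(\cG_i)\cdot H^{n-1}=c_1(\cG_i)^2\cdot H^{n-2}=0$ to $c_1(\cG_i)\equiv 0$, which is the form of the hypothesis that \cite[Corollary 3]{BandoSiu1994} actually requires. With that clarification your outline matches the standard proof, and the ``main obstacle'' you point to (Bando--Siu for stable \emph{reflexive} sheaves, and descending from projectively flat to unitary flat when $c_1\equiv 0$) is exactly what the cited Bando--Siu corollary supplies.
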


A torsion free (coherent) sheaf $\cE$ on a complex K\"ahler manifold $(Y, \omega)$ is called weakly positively curved if the following statement holds. 
Let $Y^\circ$ be the locally free locus of $\cE$. For every $\epsilon >0$, there is a possibly singular Hermitian metric $h_\epsilon$ of $\cE|_{Y^\circ}$ such that  
\[ \sqrt{-1}\Theta_{h_\epsilon}(\cE) \succeq -\epsilon \omega \otimes \mathrm{Id}_{\cE} \mbox{ on } Y^\circ.\]
For more details, we refer to \cite[Section 2]{CaoHoering2019} and \cite[Section 2]{PaunTakayama2018}.
The following lemma was essentially proved in \cite[Theorem 2.5.2]{PaunTakayama2018}, \cite[Theorem 2.21]{Paun2016} and \cite[Lemma 2.10]{CaoHoering2019}.

\begin{lemma}\label{lemma:weakly-positive}
Let $\cE$ be a torsion free weakly positively curved sheaf on a projective manifold $X$. 
Let $A$ be an ample divisor and $a>0$ an integer.
Then there exists an integer $m>0$, such that the natural evaluation map
\[H^0(X,{(\Sym}^{am} \cE)^{**} \otimes \cO_X(mA)) \to  ({\Sym}^{am} \cE)^{**}_x \otimes \cO_X(mA)_x \]
is surjective for general $x\in X$.
\end{lemma}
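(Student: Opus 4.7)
The plan is to build a strictly positively curved singular Hermitian metric on the reflexive sheaf $(\Sym^{am}\cE)^{**}\otimes\cO_X(mA)$, and then to invoke an $L^2$ global generation result for reflexive sheaves carrying such a metric.

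First, I would fix a K\"ahler form $\omega$ on $X$ and a smooth Hermitian metric $h_A$ on $\cO_X(A)$ whose Chern curvature satisfies $\sqrt{-1}\,\Theta_{h_A}(A)\succeq c_A\,\omega$ for some constant $c_A>0$ (for instance, taking $\omega$ to be the curvature form of $h_A$ itself, so that $c_A=1$). By the weak positivity hypothesis, for every $\varepsilon>0$ there exists a singular Hermitian metric $h_\varepsilon$ on $\cE|_{X^\circ}$ with Griffiths curvature bounded below by $-\varepsilon\,\omega\otimes\mathrm{Id}_{\cE}$, where $X^\circ$ is the locally free locus of $\cE$. After replacing $\cE$ by its reflexive hull $\cE^{**}$ (which does not affect $(\Sym^{am}\cE)^{**}$), we may assume $X\setminus X^\circ$ has codimension at least two.

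Next, I would fix once and for all $\varepsilon = c_A/(2a)$. Taking the $am$-th symmetric power multiplies the negative bound by $am$, so $(\Sym^{am}\cE)|_{X^\circ}$ inherits a singular metric of curvature at least $-am\varepsilon\,\omega\otimes\mathrm{Id}$. Twisting with $h_A^{\otimes m}$ adds $mc_A\,\omega\otimes\mathrm{Id}$ to the curvature, yielding a singular Hermitian metric on $(\Sym^{am}\cE\otimes\cO_X(mA))|_{X^\circ}$ whose curvature is bounded below by $m(c_A-a\varepsilon)\,\omega\otimes\mathrm{Id} = \tfrac{mc_A}{2}\,\omega\otimes\mathrm{Id}$. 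This is strictly positive and, crucially, grows linearly with $m$. Since the complement of $X^\circ$ has codimension at least two, the metric extends to a singular Hermitian metric on the reflexive sheaf $(\Sym^{am}\cE)^{**}\otimes\cO_X(mA)$ with the same positive lower bound on curvature.

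Finally, I would apply \cite[Theorem 2.5.2]{PaunTakayama2018} (compare \cite[Theorem 2.21]{Paun2016} and \cite[Lemma 2.10]{CaoHoering2019}), which says that a reflexive sheaf equipped with a singular Hermitian metric of curvature bounded below by a K\"ahler form admits, for $m$ sufficiently large, enough global sections to generate its fiber at a general point. This gives the desired surjectivity. The main obstacle is this final step: going from a positively curved singular metric on a reflexive (non-locally-free) sheaf to generic generation of global sections requires an Ohsawa--Takegoshi type $L^2$-extension theorem adapted to reflexive sheaves, together with a careful control of the multiplier ideals attached to the singular metric; these are precisely the deep analytic inputs supplied by \cite{PaunTakayama2018}. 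The steps before this one are essentially bookkeeping: the choice of $\varepsilon$ depending on $a$ and $c_A$ but not on $m$ is what guarantees the positivity accumulates linearly in $m$ and eventually dominates any fixed defect in the $L^2$ argument.
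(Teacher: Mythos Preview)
Your approach is correct in spirit and reaches the same conclusion, but it packages the argument differently from the paper and is imprecise at the decisive step. The paper passes explicitly to a desingularization $Y$ of $\mathbb{P}(\cE)$ and works with the \emph{line bundle} $L = P^{\otimes 2a}\otimes f^*\cO_X(A)$ there: the weakly-positively-curved hypothesis on $\cE$ yields a singular metric $h_L$ on $L|_{Y_1}$ with semipositive curvature which is bounded (hence has trivial multiplier ideals) along general fibers of $f$, and then one runs the Ohsawa--Takegoshi extension argument of \cite[Theorem~2.21]{Paun2016} on $Y$ to produce enough sections of $(\Sym^{2ak}\cE)^{**}\otimes\cO_X(2kA)$; setting $m=2k$ finishes.

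You instead stay on $X$ and build the positively curved metric directly on $(\Sym^{am}\cE)^{**}\otimes\cO_X(mA)$. That construction is fine. The weak point is the final black-box step: the results you cite do not literally say ``a reflexive sheaf with singular metric of curvature $\geq\delta\omega$ is generically globally generated once $\delta$ is large''. What \cite[Theorem~2.5.2]{PaunTakayama2018} and \cite[Theorem~2.21]{Paun2016} actually prove is that metric (semi)positivity implies Viehweg-type weak positivity, and their proofs proceed precisely by passing to the projectivization and applying $L^2$ extension to the tautological line bundle---exactly the step the paper spells out. So if one unwinds your citation, one recovers the paper's argument; the projectivization is not avoided, only hidden. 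Your heuristic that ``the positivity accumulates linearly in $m$ and eventually dominates any fixed defect'' is the right intuition, but the mechanism converting it into global sections is the line-bundle extension on $\mathbb{P}(\cE)$, not a direct $L^2$ estimate on the sheaf over $X$.
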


\begin{proof}
We follow the lines   of \cite[Theorem 2.21]{Paun2016}. 
We denote by $X_\cE$  the locally free locus of $\cE$. 
Let  $\mu\colon Y \to \mathbb{P}(\cE)$ be a desingularization of the main component of  $\mathbb{P}(\cE)$,  
which is an isomorphism over $X_\cE$. 
We set  $P$ the pullback of the tautological line bundle $\cO_{\mathbb{P}(\cE)}(1)$ to $Y$.
Let $f\colon Y \to X$ be the natural morphism and  $Y_1=f^{-1}(X_\cE)$.
Then $Y_1\cong \mathbb{P}(\cE|_{X_\cE})$. 
We consider the line bundle $L=P^{\otimes 2a}\otimes f^*\cO_X(A)$. 

Since $\cE$ is weakly positively curved, $L|_{Y_1}$ admits a possibly singular metric $h_L$ which has semipositive curvature.
Moreover, $h_L$ is bounded along a general fiber $Y_x$ of $f$.
In particular,   the multiplier ideal sheaf of $h_L^{\otimes k}|_{Y_{x}}$ is trivial for any integer $k$ large enough.
Arguing as in \cite[Theorem 2.21]{Paun2016}, we obtain  that  for some integer $k$ large enough, the reflexive sheaf
$$({(\Sym}^{2ak} \cE)^{**} \otimes \cO_X(kA)) \otimes \cO_X(kA) = {(\Sym}^{2ak} \cE)^{**} \otimes \cO_X(2kA) $$
is generated by global sections over some open dense subset of $X$.
We can then deduce the lemma by letting $m=2k$. 
\end{proof}

The following assertion is an extension of \cite[Proposition 2.11]{CaoHoering2019}.

\begin{lemma}
\label{lemma:num-flat-criterion-2}
Let $X$ be a projective manifold and $\cE$ a reflexive sheaf on $X$ with $c_1(\cE)=0$. 
Let $X_0 \subseteq X$  be an open subset whose complement has codimension at least $2$. 
Assume that $\cE$ is locally free over $X_0$.
Let $Y = \mathbb{P}(\cE|_{X_0})$ and $L$ the tautological line bundle.  
If the diminished base locus $\mathbb{B}_-(L)$ is not surjective onto $X_0$, then $\cE$ is a numerically flat vector bundle.
\end{lemma}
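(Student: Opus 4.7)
The plan is to verify the conditions of Theorem~\ref{thm:num-flat-criterion-1} for $\cE$ and some ample $H$ on $X$. Since $c_1(\cE)=0$, all three Chern number conditions reduce to $c_2(\cE) \cdot H^{n-2}=0$, so the task is to show $H$-semistability of $\cE$ together with this one vanishing. Let $r$ denote the rank of $\cE$.

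The first step is to extract a weak positivity property of $\cE$ from the hypothesis on $\mathbb{B}_-(L)$. Fix an ample divisor $A$ on $X$ and a rational $\epsilon>0$. Since $\pi(\mathbb{B}_-(L))$ is a proper subset of $X_0$, there is a dense open $U\subseteq X_0$ with $\pi^{-1}(U)\cap \mathbb{B}_-(L)=\emptyset$. For each $x\in U$ and each sufficiently divisible $m$, $|m(L+\epsilon\pi^*A)|$ has a section not vanishing on $\pi^{-1}(x)$; pushing forward by $\pi$ and extending from $X_0$ to $X$ by reflexivity (using $\codim(X\setminus X_0)\geq 2$) produces a global section of $(\Sym^m\cE)^{**}\otimes\cO_X(m\epsilon A)$ non-vanishing at $x$. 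The same property passes to wedge powers. Combined with $c_1(\cE)=0$, this yields $H$-semistability for every ample $H$: for a saturated subsheaf $\cF\subseteq\cE$ of rank $s$, the reflexive line bundle $\det((\cE/\cF)^{**})$ is a quotient of the weakly positive sheaf $(\bigwedge^s\cE)^{**}$ and is therefore pseudo-effective, so $\mu_H(\cF)\leq 0 = \mu_H(\cE)$.

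For the Chern number vanishing, I would choose a general complete intersection surface $S = H_1\cap\cdots\cap H_{n-2}$ in $|H|$; since $\codim(X\setminus X_0)\geq 2$, we may arrange $S\subseteq X_0$, so $\cE|_S$ is locally free on the smooth surface $S$, is $H|_S$-semistable by Mehta-Ramanathan, and has $c_1(\cE|_S)=0$. The Bogomolov-Gieseker inequality then gives $c_2(\cE)\cdot H^{n-2}=c_2(\cE|_S)\geq 0$. For the reverse, on $Y_S=\mathbb{P}(\cE|_S)$ Grothendieck's Segre class identity yields $L_S^{r+1}=-c_2(\cE|_S)$, where $L_S=L|_{Y_S}$. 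Since $\mathbb{B}_-(L)\cap Y_S$ does not dominate $S$, $L_S$ is pseudo-effective on $Y_S$ with non-nef locus projecting to a proper subset of $S$; a careful analysis then yields $L_S^{r+1}\geq 0$. Hence $c_2(\cE)\cdot H^{n-2}=0$ and Theorem~\ref{thm:num-flat-criterion-1} applies.

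The most delicate step is the last inequality $L_S^{r+1}\geq 0$, since pseudo-effectivity alone does not control top self-intersections. One must exploit both that $L_S$ is ample on every fiber of $\pi|_{Y_S}$ and that the ``negative'' direction of $L_S$ is concentrated over a subset of dimension at most one in $S$, so that cross-terms in a Zariski-type decomposition of $L_S$ vanish after intersecting with $L_S^r$ (whose pushdown to $S$ is zero because $s_1(\cE|_S)=0$).
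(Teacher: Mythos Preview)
Your argument and the paper's diverge at the last step, and that is where the gap lies. The paper's proof is a two-line reduction: pass to a very general complete intersection surface $S$ (after enlarging $X_0$ to the full locally free locus of $\cE$, so that $\codim(X\setminus X_0)\geq 3$ and hence $S\subseteq X_0$), observe that the hypothesis on $\mathbb{B}_-(L)$ restricts to $S$, and then invoke \cite[Proposition~2.11]{CaoHoering2019} as a black box to conclude that $\cE|_S$ is numerically flat. Theorem~\ref{thm:num-flat-criterion-1} together with Mehta--Ramanathan then finishes. In other words, the paper does not attempt to prove the surface case; it cites it.

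You instead try to prove the surface case directly, and your steps (weak positivity, semistability, Bogomolov--Gieseker) are sound up through $c_2(\cE|_S)\geq 0$. But the reverse inequality, i.e.\ $L_S^{r+1}\geq 0$, is not established by your sketch. On the $(r+1)$-fold $Y_S$ there is no Zariski decomposition with nef positive part in general; the divisorial $\sigma$-decomposition $L_S=P_\sigma+N_\sigma$ only gives $P_\sigma$ movable, so $P_\sigma^{r+1}\geq 0$ is not available, and the claimed vanishing of cross-terms via $\pi_*(L_S^r)=s_1(\cE|_S)=0$ does not control products of the form $P_\sigma^{k}\cdot N_\sigma^{\,r+1-k}$. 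The inequality $L_S^{r+1}\geq 0$ is precisely the content of \cite[Proposition~2.11]{CaoHoering2019} (whose proof proceeds by a different mechanism), so at this point you should either cite that result---which collapses your argument to the paper's---or supply a genuine replacement proof. A minor side remark: to ensure $S\subseteq X_0$ you need codimension at least $3$, not $2$; this is fixed, as in the paper, by first replacing $X_0$ with the full locally free locus of the reflexive sheaf $\cE$.
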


\begin{remark}\label{remark:base-locus}
We note that, in the previous lemma, even though $Y$ is only quasi-projective, we can define $\mathbb{B}_-(L)$ in the following way. 
Let $A$ be an ample divisor in $X$ and $f\colon Y \to X$ the natural morphism. 
Then we set 
\[\mathbb{B}_-(L) = \bigcup_{a>0}\bigcap_{m> 0}\mathrm{Bs}( L^{\otimes am} \otimes \cO_{Y}(f^*mA)).\]
Here, for any line bundle $M$ on $Y$,  $\mathrm{Bs}(M)$ is the base locus of $H^0(Y,M)$, regarded as a closed subset of $Y$.
Then we see that, the locus $\mathbb{B}_-(L)$ is not surjective onto $X_0$ if and only if   for any  integer  $a>0$, there exists an integer $m>0$, such that the natural evaluation map
\[H^0(X,{(\Sym}^{am} \cE)^{**} \otimes \cO_X(mA)) \to  ({\Sym}^{am} \cE)^{**}_x \otimes \cO_X(mA)_x \]
is surjective for general $x\in X$.
\end{remark}

\begin{proof}[{ Proof of Lemma \ref{lemma:num-flat-criterion-2} }]
We may assume that $\dim X \geqslant 2$. We may also assume that $X_0$ is the locally free locus of $\cE$. 
Since $\cE$ is reflexive, the complement of $X_0$ has codimension at least 3.
Let $H$ be an ample enough divisor in $X$ and $S$ the complete intersection surface of very general elements of the linear  system $|H|$.
 Then  $S \subseteq X_0$ and $\cE|_S$ is a vector bundle.
By Theorem \ref{thm:num-flat-criterion-1} and Mehta-Ramanathan theorem, it is enough to show that $\cE|_S$ is numerically flat.
The assumptions in the lemma imply that the diminished base locus of $\cO_{\mathbb{P}(\cE|_S)}(1)$ is not surjective onto $S$.
Hence $\cE|_S$ is numerically flat by \cite[Proposition 2.11]{CaoHoering2019}.
\end{proof}

As pointed out in the paragraph after \cite[Corollary 3.10]{Simpson1992},  Simpson proved that a numerically flat vector bundle is indeed flat.
In general, there may exist non isomorphic flat structures on a holomorphic vector bundle (see Example \ref{example:flat-structure} below). 
In the following Theorem \ref{thm:numflat=flat}, we underline a ``canonical'' flat structure on a numerically flat vector bundle.  
In the remainder of this paper, we will work with it for any numerically flat vector bundle. 
We note that, such a flat structure is exactly the one obtained from the correspondence in \cite[Section 3]{Simpson1992}. 

We recall some basic results about connections on complex vector bundles. 
Let  $\nabla$  be a smooth connection on a smooth complex vector bundle $\cQ$ on a complex manifold. 
If the  curvature of $\nabla^{0,1} $  is zero, 
then $\nabla$ determines a  holomorphic bundle structure $\cF$ on $\cQ$ such that $\nabla^{0,1}=\bar{\partial}_{\cF}$.  
Conversely, every holomorphic vector bundle $\cF$ admits a smooth connection such that $\nabla^{0,1}=\bar{\partial}_{\cF}$.  
For more details, see \cite[Section 1.3]{Kobayashi2014}.   

A  holomorphic  vector bundle   on a compact K\"ahler manifold  is called Hermitian flat if it admits a Hermitian metric  whose Chern connection is flat. 
In this case,  this connection is also the Hermitian-Yang-Mills connection, which is  unique.

Let $\cE$ be a numerically flat vector bundle   on a projective manifold $X$. 
Thanks to \cite[Theorem 1.18]{DemaillyPeternellSchneider1994},   there is a sequence of holomorphic  subbundles 
\[0=\cE_0 \subseteq  \cdots \subseteq \cE_k = \cE \]
such that each $\cG_i=\cE_i/\cE_{i-1}$ is  a  Hermitian flat vector bundle with the unique Hermitian flat connection $\nabla_i$.
Such a filtration always splits as smooth complex vector bundle. 
Hence if we set $\cQ=\bigoplus_{i=1}^k \cG_i$, then $\cQ$ is   isomorphic to $\cE$ as smooth vector bundles.
Thus in the following theorem,  to  construct a connection on $\cE$, we work with   $\cQ$.

\begin{thm}\label{thm:numflat=flat}
With the notation above,  there is a unique  flat  connection $\nabla_\cF$ on the complex  vector bundle  $\cQ$   such that 
\begin{enumerate}
\item  $\nabla_\cF$ induces a   holomorphic structure $\cF$ on $\cQ$, which is isomorphic to $\cE$ as holomorphic vector bundles;
\item  if $\{\cF_i\}$ is the filtration of holomorphic  subbundles on $\cF$, induced by $\{\cE_i\}$ and the isomorphism of item (1), then   $\{\cF_{i}\}$ is compatible with $\nabla_\cF$; 
\item  the $(1,0)$-part   $\nabla_\cF^{1,0}$ is    $\bigoplus_{i=1}^k \nabla_i^{1,0}$.
\end{enumerate}
\end{thm}

\begin{proof}
The existence of $\nabla_\cF$ goes back to \cite[Section 3]{Simpson1992}. 
We also refer to \cite[Section 3]{Deng2021} for a more explicit construction. 
For the uniqueness, we will prove by induction on the length $k$ of the filtration. 
If $k=1$, then the connection is the unique Hermitian flat connection.

Now we assume the uniqueness for lengths  smaller than $k$. Let  $\nabla_a$ and $\nabla_b$ be two connections on $\cQ$ which satisfy the conditions. 
By assumption,  $\nabla_a$ and $\nabla_b$ induce  isomorphic holomorphic structures. 
Such an isomorphism corresponds to some smooth automorphism  $\varphi$ of $\cQ$.
By induction, its  restrictions on $\bigoplus_{i=1}^{k-1} \cG_i$ and on $\bigoplus_{i=2}^k \cG_i$ are the identity maps. 
Therefore, item (2) of the theorem implies that  there is some smooth function $\eta$ with values  in $\cG_k^*\otimes \cG_1$ such that $\varphi$ is of the shape 
\[\varphi = \mathrm{Id}_{\cQ} + \eta,\]
where we view $\eta$ as a smooth endomorphism of $\cQ.$
Hence we have  
\[\nabla_b^{0,1} = \nabla_a^{0,1}+ \nabla_{k \to 1}^{0,1} \eta,\] 
where $\nabla_{k\to 1}$ is the Hermitian flat connection on $\cG_k^*\otimes \cG_1$ induced by $\nabla_1$ and $\nabla_k$.
Since $\nabla_a$ and $\nabla_b$ have the same $(1,0)$-part, it follows that 
$\nabla_b = \nabla_a+ \nabla_{k \to 1}^{0,1} \eta$.
We write $\delta = \nabla_{k \to 1}^{0,1} \eta$. 
The conditions on  flatness imply  that \[\nabla_{k \to 1} \delta =0.\]
Since $\nabla_{k \to 1}$ is a Hermitian flat connection, the $\partial \bar{\partial}$-lemma  implies that $\delta =0$. 
Hence $\nabla_a = \nabla_b$.
\end{proof}

\begin{remark}\label{remark:any-filtration}
We note that Theorem  \ref{thm:numflat=flat} holds for any filtration 
\[0=\cE_0 \subseteq  \cdots \subseteq \cE_k = \cE \]
on $\cE$ such that each graded piece $\cE_i/\cE_{i-1}$ is Hermitian flat. 
Indeed, we will prove later that the connection in the theorem is independent of the choice of the filtration, see Corollary \ref{cor:morphism-flat} and Remark \ref{remark:unique-connection} below.
\end{remark}

\begin{example}\label{example:flat-structure}
In general, a holomorphic vector bundle may have non isomorphic flat structures, all compatible with the holomorphic structure.  
Let $X$ be an elliptic curve and $\cE = \cO_X \oplus \cO_X$. We write $\mathbf{e}_1$ and $\mathbf{e}_2$ for the generators of the two summand of $\cE$.
We note that $\Omega_X^1\cong \cO_X$. For any constant holomorphic 1-form $s\in H^0(X, \Omega_X^1)$, we can define a   flat connection $\nabla_s$ on $\cE$ as follows,
\[ \nabla_s (f\mathbf{e}_1 + g\mathbf{e}_2) = \mathrm{d}f \mathbf{e}_1 +\mathrm{d}g\mathbf{e}_2 + gs\mathbf{e}_1,\]
where $f,g$ are arbitrary smooth functions on $X$.
Then $(\cE,\nabla_s)$ is isomorphic to $(\cE,\nabla_0)$ as   flat vector bundles if and only if $s=0$.
Furthermore, the unique connection satisfying the conditions of Theorem \ref{thm:numflat=flat} is $\nabla_0$.
\end{example}

Until the end of this paper, a connection on a holomorphic vector bundle is always assumed to be compatible with the holomorphic structure.  
We say that a flat connection on a numerically flat vector bundle $\cE$ satisfies the conditions of Theorem \ref{thm:numflat=flat} if it is induced by the isomorphism $\cE \cong \cF$ of Theorem \ref{thm:numflat=flat}.


\begin{lemma}\label{lemma:section-flat}
Let $\cE$ be a numerically flat vector bundle on a projective manifold $X$. 
Let $\nabla_\cE$ be  a flat connection on $\cE$  which satisfies the conditions of Theorem \ref{thm:numflat=flat}. 
Then any section $s \in H^0(X,\cE)$ is parallel with respect to $\nabla_\cE$.
\end{lemma}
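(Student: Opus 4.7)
The plan is to use the smooth direct sum decomposition $\cQ = \bigoplus_i \cG_i$ of $\cE$ furnished by Theorem \ref{thm:numflat=flat} and carry out a downward induction on the filtration index. Writing $s = s_1 + \cdots + s_k$ via the smooth isomorphism $\cE \cong \cQ$, property (3), namely $\nabla_\cE^{1,0} = \bigoplus_i \nabla_i^{1,0}$, reduces the claim $\nabla_\cE s = 0$ to showing $\nabla_i^{1,0} s_i = 0$ for every $i$, since $\nabla_\cE^{0,1} s = \bar\partial_\cE s$ vanishes by hypothesis. Compatibility of $\nabla_\cE$ with the filtration forces the holomorphic structure on $\cQ$ to decompose as $\bar\partial_\cE = \bigoplus_i \bar\partial_i + \beta$, where $\beta$ is a strictly triangular $\mathrm{End}(\cQ)$-valued $(0,1)$-form whose blocks $\beta_{ij}\colon \cG_j \to \cG_i$ are nonzero only for $i<j$. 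The equation $\bar\partial_\cE s = 0$ then reads $\bar\partial_i s_i = -\omega_i$ for each $i$, where $\omega_i := \sum_{j>i} \beta_{ij}(s_j) \in A^{0,1}(X,\cG_i)$.

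I will prove by downward induction on $i$ that $\nabla_i s_i = 0$. The base case $i = k$ is immediate: $\omega_k = 0$, so $s_k$ is a holomorphic section of the Hermitian flat bundle $\cG_k$, and the Bochner-Kodaira-Nakano identity, which collapses for a Hermitian flat bundle on a compact K\"ahler manifold to $\Delta_{\bar\partial} = \Delta_{\partial}$ on sections, yields $\nabla_k s_k = 0$ upon integration over $X$.

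For the inductive step, assume $\nabla_j s_j = 0$ for all $j > i$; it suffices to show $\omega_i = 0$, because then $\bar\partial_i s_i = 0$ renders $s_i$ holomorphic in $\cG_i$ and the base-case argument applies again. Setting $\nabla^{\mathrm{ss}} := \bigoplus_i \nabla_i$, the flatness of $\nabla_\cE$ reads $F_{\nabla_\cE} = \nabla^{\mathrm{ss}}\beta + \beta \wedge \beta = 0$; extracting the $(1,1)$-component gives $\nabla^{\mathrm{ss},1,0}\beta = 0$, so each block $\beta_{ij}$ is closed with respect to the $(1,0)$-part of the Chern connection on $\mathrm{Hom}(\cG_j, \cG_i)$. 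Applying the Leibniz rule and using $\nabla_j^{1,0} s_j = 0$ by induction, one obtains $\nabla_i^{1,0}\omega_i = 0$; since $\nabla_i^{0,1}\omega_i = -\bar\partial_i^2 s_i = 0$ is automatic, $\omega_i$ is a $\nabla_i$-closed $\cG_i$-valued $(0,1)$-form. A K\"ahler identity argument, using that $\cG_i$ is Hermitian flat so that Nakano forces $\Delta_{\bar\partial} = \Delta_{\partial}$ on twisted forms, shows that $\omega_i$ is $\bar\partial_i$-harmonic; being also $\bar\partial_i$-exact by construction, its cohomology class is zero, so the unique harmonic representative vanishes and $\omega_i = 0$.

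The main delicate point is the harmonicity step: one must verify carefully that $\nabla_i$-closedness of a twisted form with values in a Hermitian flat bundle on a compact K\"ahler manifold implies $\bar\partial_i$-harmonicity, so that the form is determined by its cohomology class. This is essentially the same $\partial\bar\partial$-type input used in the uniqueness part of Theorem \ref{thm:numflat=flat}, but applied to $(0,1)$-forms rather than to endomorphism-valued $1$-forms. Once the induction terminates, $\nabla_i s_i = 0$ and $\omega_i = 0$ for every $i$, so $(\nabla_\cE s)_i = \nabla_i s_i + \omega_i = 0$ for all $i$, which is exactly $\nabla_\cE s = 0$.
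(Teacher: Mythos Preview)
Your argument is correct and takes a genuinely different route from the paper. The paper proceeds by induction on the filtration length $k$: in the base case an irreducible Hermitian flat bundle is slope stable, so a nonzero holomorphic section forces it to be $\cO_X$ and the section is constant; for the inductive step one looks at the image of $s$ in the top quotient $\cG_k$, and if this image is nonzero then $\cG_k\cong\cO_X$ by stability, the short exact sequence $0\to\cE_{k-1}\to\cE\to\cG_k\to 0$ splits \emph{holomorphically}, and condition~(3) together with the inductive hypothesis on $\cE_{k-1}$ finishes the proof. You instead work entirely in the fixed smooth splitting and run a downward induction using Hodge theory: your key step, showing $\omega_i$ is $\bar\partial_i$-harmonic, is justified because $\partial_i^*\omega_i=0$ for bidegree reasons (it would be a $(-1,1)$-form), so $\Delta_{\partial_i}\omega_i=0$, whence $\Delta_{\bar\partial_i}\omega_i=0$ via the Nakano identity $\Delta_{\bar\partial_i}=\Delta_{\partial_i}$ for Hermitian flat coefficients. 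The paper's approach is more algebraic and makes visible the structural fact that only trivial graded pieces $\cG_i\cong\cO_X$ can contribute to $H^0(X,\cE)$; your approach is more analytic, treats all graded pieces uniformly without ever invoking stability or a holomorphic splitting, and in fact shows the stronger statement that each smooth component $s_i$ is individually holomorphic and $\nabla_i$-parallel in $\cG_i$.
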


\begin{proof}
We first note that $\nabla_\cE^{0,1}s=0$ as $s$ is holomorphic. It remains to show that $\nabla_\cE^{1,0}s = 0$.
There is a sequence of subbundles 
\[0=\cE_0 \subseteq  \cdots \subseteq \cE_k = \cE,\] 
such that each $\cG_i=\cE_i/\cE_{i-1}$ is  an irreducible Hermitian flat vector bundle with connection $\nabla_i$.
We will prove by induction on the length  $k$.
If $\cE$ is irreducible Hermitian flat, then it is slope stable. Hence  either $\cE \cong \cO_X$ or $s=0$. In both cases,  $s$ is constant and $\nabla_\cE s=0$. 

Now we assume the statement is true for lengths smaller than $k$. We have the exact sequence 
\[0\to \cE_{k-1} \to \cE \to \cG_k \to 0.\]
If   $s \in H^0(X, \cE_{k-1})$ then  $\nabla_\cE^{1,0} s= 0$ by induction.
Assume that $s\notin H^0(X, \cE_{k-1})$. 
Then $\cG_k\cong \cO_X$ as it is an irreducible Hermitian flat vector bundle which admits a nonzero global holomorphic section.
In this case,  the exact sequence above splits and  $\cE \cong \cE_{k-1} \oplus \cG_k$ as holomorphic vector bundles.  
We can decompose $s=s'+s''$ according to this direct sum. 
From the item (3) of Theorem \ref{thm:numflat=flat}, we see that
\[\nabla_\cE^{1,0} = (\nabla_\cE|_{\cE_{k-1}})^{1,0} \oplus \nabla_k^{1,0}, \] 
where $\nabla_k$ is the unique Hermitian flat connection on $\cG_k \cong \cO_X$. 
Then we have $\nabla_k^{1,0} s''= 0$, and  $(\nabla_{\cE}|_{\cE_{k-1}})^{1,0}(s')=0$ by induction. Hence $\nabla_\cE^{1,0} s = 0$.
\end{proof}

\begin{remark}
It is indispensable to specific the connection in Lemma \ref{lemma:section-flat}. 
For instance, we consider a  flat vector bundle $(\cE,\nabla_s)$ on an elliptic curve $X$ defined in Example \ref{example:flat-structure}, such that $s$ is a nonzero constant holomorphic 1-form. 
The generator $\mathbf{e}_2$ can be view as a section $\mathbf{e}_2 \in H^0(X,\cE)$. 
Then $\nabla_s \mathbf{e}_2 = s \mathbf{e}_1$ is not zero. That is, $\mathbf{e}_2$ is not parallel with respect to $\nabla_s$.
\end{remark}

\begin{cor}\label{cor:morphism-flat}
Let $\varphi \colon \cE \to \cG$ be a generically surjective morphism between numerically flat vector bundles on a projective manifold $X$.
Then $\varphi$ is surjective. 

Assume  further that   $\cE$ and $\cG$ are equipped with   flat connections  $\nabla_\cE$ and $\nabla_\cG$ respectively,  which satisfy the conditions of Theorem \ref{thm:numflat=flat}. 
Then  $\varphi$  is a morphism of flat vector bundles.
\end{cor}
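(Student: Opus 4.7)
The plan is to view $\varphi$ as a global section of $\cHom{\cE}{\cG} \cong \cE^* \otimes \cG$ and then apply Lemma~\ref{lemma:section-flat} to this section.

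First I would check that $\cE^* \otimes \cG$ is a numerically flat vector bundle. The dual of a numerically flat bundle is numerically flat (numerical flatness is symmetric under dualization), and the tensor product of two numerically flat bundles is numerically flat (nefness is preserved by tensor products). Next, the connections $\nabla_\cE$ and $\nabla_\cG$ induce in the usual way a flat connection $\nabla$ on $\cE^* \otimes \cG$. The key technical step is to identify $\nabla$ with the canonical connection on $\cE^* \otimes \cG$ provided by Theorem~\ref{thm:numflat=flat}. For this, I would combine the filtrations of $\cE$ and $\cG$ from Theorem~\ref{thm:numflat=flat} to produce a filtration of $\cE^* \otimes \cG$ whose graded pieces are tensor products of irreducible Hermitian flat bundles, then refine it so that every graded piece is irreducible Hermitian flat (tensor products of Hermitian flat bundles are Hermitian flat, hence decompose as direct sums of irreducible Hermitian flat factors). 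The connection $\nabla$ is compatible with this refined filtration, its $(0,1)$-part is $\bar{\partial}_{\cE^*\otimes\cG}$, and its $(1,0)$-part restricts on each graded piece to the corresponding Hermitian flat connection, so the three conditions of Theorem~\ref{thm:numflat=flat} hold. By the uniqueness asserted there, $\nabla$ is the canonical connection on $\cE^* \otimes \cG$.

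Once this identification is in place, Lemma~\ref{lemma:section-flat} applied to $\varphi \in H^0(X, \cHom{\cE}{\cG})$ yields $\nabla \varphi = 0$. Unwinding the definition of the induced connection on $\cHom{\cE}{\cG}$, this equation is exactly $\nabla_\cG \circ \varphi = \varphi \circ \nabla_\cE$, i.e., $\varphi$ is a morphism of flat vector bundles, which proves the second assertion. For the surjectivity of $\varphi$, I would use parallel transport: for any continuous path from $x$ to $y$ in $X$, the connections $\nabla_\cE$ and $\nabla_\cG$ induce linear isomorphisms $P_\cE \colon \cE_x \to \cE_y$ and $P_\cG \colon \cG_x \to \cG_y$, and the relation $\nabla\varphi = 0$ translates into $\varphi_y = P_\cG \circ \varphi_x \circ P_\cE^{-1}$. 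Hence $\varphi_x$ and $\varphi_y$ have the same rank, and generic surjectivity implies surjectivity at every point.

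The main obstacle I foresee is the identification of the induced tensor product connection $\nabla$ with the canonical connection of Theorem~\ref{thm:numflat=flat} on $\cE^* \otimes \cG$. This is where a careful verification of the three characterizing conditions is needed; once it is carried out, both assertions follow immediately from Lemma~\ref{lemma:section-flat} and the standard parallel transport argument.
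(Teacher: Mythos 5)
Your proof of the second assertion follows the paper's route exactly: view $\varphi$ as a section of $\cE^*\otimes\cG$, identify the tensor connection with the canonical one from Theorem~\ref{thm:numflat=flat}, and apply Lemma~\ref{lemma:section-flat}. The paper dismisses the identification step in one line; your spelling out of how to combine and refine the two filtrations is the content that line compresses, and it is correct (tensor products of Hermitian flat bundles are Hermitian flat, the tensor connection respects the refinement, and uniqueness from Remark~\ref{remark:unique-connection} then pins down the connection).

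For surjectivity, however, you take a genuinely different path. The paper does not use the connections at all: it passes to the induced section $\cO_X \to (\det\cG)\otimes\bigwedge^{l}\cE^{*}$ (where $l=\mathrm{rk}\,\cG$), observes that this line bundle is numerically flat hence nef together with its dual, and invokes \cite[Lemma 1.16]{DemaillyPeternellSchneider1994} to conclude the section is nowhere zero. Your argument instead derives surjectivity \emph{from} the parallelism established in the second part, via parallel transport and constancy of rank. This is valid — Theorem~\ref{thm:numflat=flat} always furnishes the required connections, so the hypotheses of the second part can be arranged — but it inverts the logical order the corollary suggests (the surjectivity claim precedes and is stated without the connection hypothesis). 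The paper's nefness argument is cleaner in that it proves the first assertion in the generality it is stated, without appeal to flat structures; your parallel-transport argument buys nothing extra here but is a perfectly good alternative once the second part is secured.
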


\begin{proof}
To prove that $\varphi$ is surjective, it is enough to show that the induced morphism 
\[\cO_X \to (\det \cG) \otimes (\bigwedge^l \cE^*)\] does not vanish at any point of $X$.
Here $l$ is the rank of $\cG$.
Since $(\det \cG^*) \otimes (\bigwedge^l \cE)$ is numerically flat, hence nef, the statement   follows from \cite[Lemma 1.16]{DemaillyPeternellSchneider1994}.

For the second part of the corollary, we may identify $\varphi$   as an element of  $H^0(X,\cE^*\otimes \cG)$. 
If 
\[ 0=\cE_0 \subseteq  \cdots \subseteq \cE_k = \cE^*  \mbox{ and }  0=\cF_0 \subseteq  \cdots \subseteq \cF_l = \cF\]
are filtrations whose graded pieces are Hermitian flat vector bundles, 
then  we have a filtration 
\[ 0=\cG_0  \subseteq  \cdots \subseteq \cG_t = \cE^* \otimes \cF \]
such that 
\[\cG_s = \bigoplus_{i+j=s} (\cE_i\otimes \cF_j).\]  
Furthermore, for each $s$ we have, 
\[
\cG_{s}/\cG_{s-1}  \cong \bigoplus_{i+j=s} (\cE_i/\cE_{i-1}\otimes \cF_j/\cF_{j-1}).
\]
As a result, the graded pieces of the filtration $\{\cG_s\}$ are Hermitian flat vector bundles. 
We can then verify that the tensor connection $\nabla_{\cE^*\otimes \cG}$ on $\cE^*\otimes \cG$ satisfies the conditions of 
Theorem \ref{thm:numflat=flat} as well.  
Hence Lemma \ref{lemma:section-flat} implies that 
$\varphi$ is parallel with respect to $\nabla_{\cE^*\otimes \cG}$.  
It follows that, for any local smooth section $\sigma$ of $\cE$, we have 
\[ \nabla_\cG (\varphi(\sigma)) 
= \varphi(\nabla_\cE \sigma) + (\nabla_{\cE^*\otimes \cG}) \varphi (\sigma)  
= \varphi(\nabla_\cE \sigma).
\]
In another word,  $\varphi$  is a morphism of flat vector bundles.
\end{proof}

\begin{remark}\label{remark:unique-connection}
If we take $\varphi$  as the identity endomorphism of $\cE$ in the previous corollary, then we may deduce that a connection satisfying the conditions of Theorem \ref{thm:numflat=flat} is unique, independent of the choice of the filtration $\{\cE_i\}$.
\end{remark}



The   lemma  below reveals a relation between locally trivial families and flat  holomorphic vector bundles. 
 
\begin{lemma}\label{lemma:loc-trivial} 
Let $(\cE, \nabla_\cE)$ be a flat   vector bundle on a complex manifold $X$. 
Assume   that  there is a  surjective morphism of  graded commutative $\cO_X$-algebras   
$$\bigoplus_{p\geqslant 0} {\Sym}^p \cE \to \bigoplus_{p\geqslant 0} \cQ_p,$$ 
such  that $\cQ_0 = \cO_X$ and  that each graded piece  ${\Sym}^{p}\cE \to \cQ_p$  is a  morphism of   flat   vector bundles. 
Then 
$$f\colon Z= \mathrm{Proj}_{\cO_X} \bigoplus_{p\geqslant 0} \cQ_p \to X$$ is a locally trivial family over $X$.
Moreover, the connection $\nabla_{\cE}$ induces a foliation $\cG$ on $Z$ such that $T_Z = \cG \oplus \cF$, where $\cF$ is the foliation induced by $f$.
\end{lemma}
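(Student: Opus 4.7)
The plan is to use the flat connection $\nabla_\cE$ to produce canonical local product trivializations of $f\colon Z\to X$, then read off the foliation $\cG$ as the horizontal foliation of these trivializations. Around any point $x_0\in X$, choose a simply connected open neighborhood $U$. The flatness of $\nabla_\cE$ yields a frame of parallel sections $(s_1,\dots,s_r)$ of $\cE$ on $U$, giving a trivialization $\cE|_U \cong \cO_U\otimes_{\mathbb{C}} V$ with $V:=\cE_{x_0}$, under which $\nabla_\cE$ becomes the exterior derivative. This induces flat trivializations $\Sym^p \cE|_U \cong \cO_U\otimes\Sym^pV$. Since each $\Sym^p\cE\to\cQ_p$ is a morphism of flat bundles, it carries parallel sections to parallel sections and hence, in suitable flat trivializations $\cQ_p|_U\cong \cO_U\otimes W_p$ with $W_p:=(\cQ_p)_{x_0}$, becomes the constant family of surjections $\Sym^pV\to W_p$. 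The algebra multiplication on $\bigoplus\cQ_p$, being induced from $\Sym^\bullet\cE$, is also the constant family; thus $\bigoplus_{p\geqslant 0}\cQ_p|_U$ is identified canonically with $\cO_U\otimes W_\bullet$ for the graded $\mathbb{C}$-algebra $W_\bullet:=\bigoplus W_p$. Taking relative Proj yields $Z|_U \cong F\times U$ with $F:=\mathrm{Proj}(W_\bullet)\cong Z_{x_0}$.

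Next I would verify that these local product structures patch. If $U,U'$ are two such neighborhoods, the two flat frames of $\cE$ differ on $U\cap U'$ by a locally constant $\mathrm{GL}(V)$-valued transition matrix (this is the rigidity of flat vector bundles on simply connected opens, and is implicit in Corollary \ref{cor:morphism-flat}). Because each $\Sym^p\cE\to\cQ_p$ and each multiplication map is a morphism of flat bundles, the induced change of flat trivialization on $\cQ_p|_{U\cap U'}$ is also locally constant, and respects the graded algebra structure on $W_\bullet$. Therefore the two isomorphisms $Z|_U\cong F\times U$ and $Z|_{U'}\cong F\times U'$ differ on $U\cap U'$ by a locally constant cocycle with values in $\mathrm{Aut}(F)$. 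This exhibits $f\colon Z\to X$ as a locally trivial analytic family with typical fiber $F$.

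Finally I would define $\cG$: on each trivialization $Z|_U\cong F\times U$ let $\cG|_{Z|_U}$ be the pullback of $T_U$ under the second projection. Since the transition automorphisms on $F\times(U\cap U')$ are constant along $U\cap U'$, they preserve the horizontal foliation $\{F\times\{u\}\}_{u\in U\cap U'}$, so the local pieces glue to a well-defined integrable subsheaf $\cG\subseteq T_Z$ of rank $\dim X$, whose leaves are locally of the form $\{y\}\times U$. By construction $\mathrm{d}f$ maps $\cG$ isomorphically onto $f^*T_X$ at every point, hence $\cG\cap\cF=0$ where $\cF=T_{Z/X}=\ker(\mathrm{d}f)$, and a rank count gives $T_Z=\cF\oplus\cG$.

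The main obstacle is Step 2: the compatibility of the local product structures depends crucially on the fact that in flat trivializations, morphisms of flat vector bundles are given by locally constant matrices. This is exactly where the hypothesis that each graded piece $\Sym^p\cE\to\cQ_p$ is a morphism of holomorphic flat vector bundles is used; without it, the trivializations of $\cQ_p$ would have holomorphic, rather than locally constant, transition functions, and the horizontal foliation would fail to patch globally.
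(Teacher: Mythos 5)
Your proof is correct and takes essentially the same approach as the paper: both exhibit $Z$ as a flat fiber bundle with typical fiber $F=\mathrm{Proj}(W_\bullet)$, the horizontal distribution of which furnishes $\cG$. The only difference is presentation: the paper pulls back to the universal cover $\widetilde X\to X$, upgrades the algebra morphism to a $\pi_1(X)$-equivariant surjection $\bigoplus{\Sym}^p E\to\bigoplus Q_p$ of $\mathbb{C}$-algebras, takes $\mathrm{Proj}$ there, and descends $F\times\widetilde X$ together with its horizontal foliation by equivariance, whereas you work directly with parallel frames on simply connected charts and check that the transition functions are locally constant automorphisms of $F$ compatible with the grading; these are the two standard equivalent encodings of a flat $\mathrm{Aut}(F)$-bundle.
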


\begin{proof}
We consider  the following  commutative diagram, where the vertical arrows are  product maps  
\begin{equation*} 
\begin{tikzcd}[column sep=large, row sep=large]
    {\Sym}^i \cE \otimes {\Sym}^j \cE   \ar[d," "]      \ar[r," "]  & \cQ_i\otimes \cQ_j  \ar[d," "]   \\
  {\Sym}^{i+j} \cE \ar[r," "] &   \cQ_{i+j}
\end{tikzcd}
\end{equation*} 
Since the horizontal maps and the first vertical map are   morphisms of flat vector bundles, and since the horizontal maps are surjective,  we see  that   $\cQ_i\otimes \cQ_j \to \cQ_{i+j}$
is a  morphism  of flat vector bundles as well.

Let $\pi\colon \widetilde{X} \to X$ be the universal cover of $X$. 
Then the connection $\nabla_{\cE}$ determines   isomorphisms  
$$\pi^*\cE \cong E \otimes \cO_{\widetilde{X}}  \mbox{ and } \pi^*\cQ_p \cong Q_p \otimes \cO_{\widetilde{X}},$$
where $E$ and $Q_p$ are complex vector spaces equipped with $\pi_1(X)$-actions.
Moreover, under these isomorphisms, the natural morphism 
$$\bigoplus_{p\geqslant 0} {\Sym}^{p}\pi^*\cE \to \bigoplus_{p\geqslant 0} \pi^*\cQ_p$$ 
is  induced by a surjective $\pi_1(X)$-equivariant morphism of  graded commutative $\mathbb{C}$-algebras   
$$\bigoplus_{p\geqslant 0} {\Sym}^p  E \to \bigoplus_{p\geqslant 0}  Q_p.$$

Let  $I$ be the kernel of the previous morphism and  $$F = \mathrm{Proj} \bigoplus_{p\geqslant 0} Q_p.$$ 
Then  $F$ is the subvariety of  $\mathbb{P}(E)$ with graded ideal $I$.  
Since $I$ is stable under the $\pi_1(X)$-action, we see that $F$ is stable under the natural $\pi_1(X)$-action on $\mathbb{P}(E)$. 
Hence  we  obtain  an induced   polarized $\pi_1(X)$-action on $F$.
Moreover, there is a $\pi_1(X)$-equivariant isomorphism $Z\times_X \widetilde{X} \cong F\times \widetilde{X}$. 
This shows that $Z$ is locally trivial.
The relative tangent bundle $\widetilde{\cG}$ of the  natural projection  $F\times \widetilde{X} \to F$ is $\pi_1(X)$-equivariant. 
Hence it descends to a foliation $\cG$ on $Z$.  Furthermore, we have  $T_Z = \cG \oplus \cF$.
\end{proof}

\begin{remark}\label{remark:Ehresmann-connection} 
The foliation $\cG$ in the previous lemma has the following alternative description. 
The flat connection $\nabla_\cE$ induces a flat Ehresmann connection on $\mathbb{P}(\cE)$, which can be viewed as a foliation $\cH$ on $\mathbb{P}(\cE)$. 
By construction, we have a  natural embedding $Z \to \mathbb{P}(\cE)$ of  fiber bundles over $X$. 
Then $\cH$ induces an  Ehresmann connection  on $Z$, which is exactly $\cG$. 
\end{remark}

The following proposition on  representations of fundamental groups is an application of \cite[Theorem 4.8]{Kollar1993}.

\begin{prop}\label{prop:flat-descend}
Let $f\colon X\to Y$ be a proper fibration between  smooth complex algebraic varieties. 
Assume that for every prime divisor $B$ in $Y$, there is an irreducible component of $f^*B$ which is reduced and dominates $B$. 
Let  $\xi$ be a linear representation of the fundamental group $\pi_1(X)$.
If $\xi$ induces trivial representation on general fibers of $f$, then $\xi$ factors through $\pi_1(Y)$.
\end{prop}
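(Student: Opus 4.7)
The plan is to reduce the proposition to a direct application of \cite[Theorem 4.8]{Kollar1993}. Let $F$ denote a general fiber of $f$. Since $X$ is path-connected, the images in $\pi_1(X)$ of the fundamental groups of any two general fibers are conjugate, so the hypothesis on $\xi$ is equivalent to the statement that $\xi$ kills the image of $i_*\colon \pi_1(F)\to \pi_1(X)$ for one (hence any) general fiber. It therefore suffices to show that the natural sequence
\[ \pi_1(F) \xrightarrow{\; i_* \;} \pi_1(X) \xrightarrow{\; f_* \;} \pi_1(Y) \longrightarrow 1 \]
is exact, because then $\ker(f_*) = \operatorname{Im}(i_*) \subseteq \ker(\xi)$, so $\xi$ descends to $\pi_1(Y)$.

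Surjectivity of $f_*$ is standard: since $f$ is a proper fibration between smooth varieties and hence has connected fibers, one can apply the Stein factorization argument to conclude that $\pi_1(X) \twoheadrightarrow \pi_1(Y)$. The substantive point is the exactness at $\pi_1(X)$. In general, even for proper surjections with connected fibers, $\ker(f_*)$ can be strictly larger than the normal closure of $\operatorname{Im}(i_*)$: extra contributions come from loops around multiple fibers over divisors in $Y$, which give rise to the orbifold fundamental group in Kollár's framework. The precise condition that eliminates these extra generators is exactly the one assumed here: that every prime divisor $B\subset Y$ has at least one reduced component in $f^*B$ that dominates $B$. This is the hypothesis of \cite[Theorem 4.8]{Kollar1993}, whose conclusion is precisely the exactness of the displayed sequence.

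Consequently, my proof would consist of two short steps. First, verify that the hypothesis of the proposition matches the hypothesis of \cite[Theorem 4.8]{Kollar1993} (the condition on prime divisors being literally the Kollár condition of ``no multiple fibers in codimension one''). Second, invoke that theorem to obtain exactness, and then conclude as above that $\xi$ factors through $\pi_1(Y)$. The only potential subtlety, and thus the main thing to double-check, is the compatibility of conventions: namely that ``proper fibration'' in this excerpt means surjective with connected fibers, and that Kollár's formulation (stated for normal varieties) applies verbatim in our smooth setting. Once this dictionary is fixed, no further work is needed.
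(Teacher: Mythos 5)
Your proposal rests on the claim that \cite[Theorem 4.8]{Kollar1993} gives the \emph{topological} exactness of
$\pi_1(F) \to \pi_1(X) \to \pi_1(Y) \to 1$,
i.e.\ that $\ker(f_*)$ equals the normal closure of $\operatorname{Im}(i_*)$. That is not what Koll\'ar's theorem says, and the gap is precisely the one the paper's proof is designed to bridge. Koll\'ar's result (and the auxiliary Lemma 4.8.4 that the paper invokes) lives in the category of finite \'etale covers: under the no-multiple-fiber-in-codimension-one hypothesis, every normal finite-index subgroup $G \subseteq \pi_1(X)$ containing $\operatorname{Im}(\pi_1(F))$ is the preimage of a finite-index subgroup of $\pi_1(Y)$. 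Equivalently, one only learns that $\ker(f_*)$ is contained in $H$, the \emph{profinite closure} of the normal subgroup generated by $\operatorname{Im}(i_*)$ (the intersection of all finite-index normal subgroups containing it). In general $H$ can be strictly larger than the normal closure, so profinite exactness does not imply topological exactness.

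This is why the Proposition restricts $\xi$ to be a \emph{linear} representation, and why the paper's argument has an extra step your proposal omits: since $\xi(\pi_1(X))$ is a linear group, it is residually finite (Malcev), so $\ker(\xi)$ is an intersection of finite-index normal subgroups; combined with $\operatorname{Im}(i_*) \subseteq \ker(\xi)$ this gives $H \subseteq \ker(\xi)$, and then the Koll\'ar-type finite-cover argument shows $\ker(f_*) \subseteq H$. If the exactness you assert were available, the linearity hypothesis would be superfluous and the factorization would hold for an arbitrary representation — a strong hint that the direct exactness is neither what Koll\'ar proves nor what is needed. To repair your argument, replace ``the sequence is exact'' with ``the sequence is exact after profinite completion / at the level of finite quotients,'' then add the residual-finiteness step to descend to $\ker(\xi)$.
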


\begin{proof}
By removing some closed subset of codimension at least $2$ in $Y$, we may assume that $f$ is equidimensional. 
Let $F$ be a general fiber of $f$, and $H \subseteq \pi_1^{}(X)$ the intersection of all normal subgroups of finite index which contain the image of $\pi_1(F)$. 
Since a linear group is residually finite, and since   $\mathrm{Ker}(\xi)$ contains the image of $\pi_1(F)$, we obtain that  
$H \subseteq \mathrm{Ker}(\xi)$.

We write $K$ the kernel of the surjective morphism  $\theta \colon \pi_1^{}(X) \to \pi_1^{}(Y)$. The proposition is equivalent to show that $$K \subseteq \mathrm{Ker}(\xi).$$ From the previous discussion, it is enough to show that $K \subseteq H$.
Let $G\subseteq \pi^{}_1(X)$ be a normal subgroup of finite index  which contains $H$. 
We need to show that it contains $K$.
Let $p\colon X(G) \to X$ be the finite \'etale cover corresponding  to $G$, and $\pi\colon Y(G)\to Y$ the normalization of $Y$ in function field of $X(G)$. 
Since $G$ contains the image of $\pi_1(F)$, the morphism $p$ is a trivial cover over $F$.  

By assumption,  for any prime divisor $B$ in $Y$,  the pullback  $f^*B$ has a reduced component.  
It follows from \cite[Lemma 4.8.4]{Kollar1993} that $\pi$ is \'etale (we note that the proof of \cite[Lemma 4.8.4]{Kollar1993} does not require $Y$ to be proper). 
Then $\pi$ corresponds to a   subgroup $M \subseteq \pi_1^{}(Y)$ of finite index .  
Since $p$ is a trivial cover over $F$, it follows that $X(G)\cong X\times_Y Y(G)$.
Therefore $G = \theta^{-1}(M)$, and hence it  contains $K$.
This completes the proof.  
\end{proof}

We also need the following lemma for the proofs of the main theorems.

\begin{lemma}\label{lemma:iso-descend}
Let $f\colon X\to Y$ be a surjective morphism between smooth  quasi-projective varieties.  Assume that its general  fibers are proper connected.
Let $\cV$ and $\cW$ be two  vector bundles on $Y$. If there is an isomorphism  $\varphi\colon f^*\cV \cong f^*\cW$ on $X$, then $\varphi$ descends to an isomorphism  $\eta\colon \cV\cong \cW$ on $Y$.
\end{lemma}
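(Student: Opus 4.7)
The plan is to construct $\eta$ over a large open subset of $Y$ using the projection formula and then extend it to all of $Y$ via a boundedness argument.

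Set $\cE = \cV^* \otimes \cW$, so that $\varphi$ becomes an element of $H^0(X, f^*\cE)$. Let $Y^\circ \subseteq Y$ be the largest open subset over which $f$ is proper with geometrically connected fibers; this is dense by hypothesis. Setting $X^\circ = f^{-1}(Y^\circ)$ and $f^\circ = f|_{X^\circ}$, the projection formula combined with $f^\circ_*\cO_{X^\circ} = \cO_{Y^\circ}$ (which holds because $f^\circ$ is proper with geometrically connected fibers) gives $f^\circ_*(f^\circ)^*\cE = \cE|_{Y^\circ}$. Taking global sections, $\varphi|_{X^\circ}$ corresponds to a unique morphism $\eta^\circ \colon \cV|_{Y^\circ} \to \cW|_{Y^\circ}$ satisfying $(f^\circ)^*\eta^\circ = \varphi|_{X^\circ}$. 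In particular $\eta^\circ$ defines a rational section of $\cE$ on $Y$.

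The main step is to extend $\eta^\circ$ to a holomorphic section on all of $Y$. Since $Y$ is smooth and $\cE$ is locally free, Hartogs' principle reduces this to extension across every codimension-one point. Suppose for contradiction that some prime divisor $B \subseteq Y$ lies in the pole locus of $\eta^\circ$, and pick a very general $y_0 \in B$ together with, by surjectivity of $f$, a point $x_0 \in f^{-1}(y_0)$. Since $X^\circ$ is open and dense in $X$, one can choose an analytic disc $\gamma \colon (\Delta, 0) \to (X, x_0)$ with $\gamma(t) \in X^\circ$ for $t \neq 0$. On $\Delta \setminus \{0\}$ the identity $\varphi \circ \gamma = \eta^\circ \circ f \circ \gamma$ holds. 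The left-hand side extends holomorphically to the whole disc because $\varphi$ is holomorphic on $X$, hence is bounded near $t = 0$. On the other hand, in a local trivialization of $\cE$ near $y_0$, some entry of $\eta^\circ$ has the form $h/g^k$ with $g$ a local equation of $B$, $k \geq 1$, and $h(y_0) \neq 0$ (ensured by taking $y_0$ very general on $B$); since $f(\gamma(t)) \to y_0$ and $f(\gamma(t)) \notin B$ for $t \neq 0$, this entry evaluated at $f(\gamma(t))$ is unbounded, a contradiction.

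Therefore $\eta^\circ$ extends to a morphism $\eta \colon \cV \to \cW$ on $Y$ with $f^*\eta = \varphi$. Since $\varphi$ is an isomorphism, $f^*\det\eta = \det\varphi$ is nowhere vanishing; surjectivity of $f$ then forces $\det\eta$ to be nowhere vanishing, so $\eta$ is an isomorphism. The main obstacle is the pole analysis, which requires producing an analytic disc through a given point of $f^{-1}(B)$ that leaves $f^{-1}(B)$ immediately; this is guaranteed because $X^\circ = f^{-1}(Y^\circ)$ is open and dense in $X$, a consequence of surjectivity of $f$ and density of $Y^\circ$ in $Y$. Both the initial descent on $Y^\circ$ and the final upgrade to an isomorphism are then formal.
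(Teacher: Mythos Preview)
Your overall plan---descend on the open locus via the projection formula and then extend across the boundary---is the same as the paper's, and the final step (passing from a morphism to an isomorphism via $\det$) is fine. The gap is in the extension step. You assert that if $\eta^\circ$ fails to extend across a prime divisor $B$, then near a very general $y_0\in B$ some entry of $\eta^\circ$ has the form $h/g^k$ with $h(y_0)\neq 0$. This presupposes that $\eta^\circ$ is \emph{meromorphic} along $B$. But the lemma is stated for holomorphic bundles $\cV,\cW$ and a holomorphic $\varphi$, so $\eta^\circ$ is only known to be a holomorphic section on the Zariski-open set $Y^\circ$; a priori it could have an essential singularity along $B$ (think of $e^{1/g}$). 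Your single analytic disc then only shows that $\eta^\circ$ is bounded along the one curve $f\circ\gamma$, which neither rules out an essential singularity nor yields boundedness on a full neighbourhood of $y_0$. (If one assumes $\cV,\cW,\varphi$ are algebraic, your ``rational section'' claim becomes valid and the pole argument works; but that is a stronger hypothesis than the lemma gives.)

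The paper avoids this issue by proving directly that the descended section is \emph{locally bounded} near every boundary point and then applying Riemann's removable singularity theorem, with no meromorphicity assumption. The key trick is to slice $X$ by a general complete intersection $Z$ of dimension equal to $\dim Y$; then $f|_{Z}$ is quasi-finite near the chosen point, hence open in the analytic topology, so a small Euclidean neighbourhood of $x_0$ in $Z$ maps onto an open neighbourhood of $y_0$ in $Y$, and boundedness of $\varphi$ there transfers to boundedness of $\eta^\circ$. Your one-dimensional disc cannot produce an open image in $Y$ when $\dim Y>1$; replacing it by a $(\dim Y)$-dimensional slice transverse to the fibre is exactly the missing ingredient.
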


\begin{proof}
We only need to show that $\varphi$ descends to a morphism $\eta \colon \cV\to \cW$ on $Y$. 
Indeed,  since $f\colon X\to Y$ is surjective, if $\eta$ exists, then it must be  an isomorphism.

The  morphism $\varphi$ can be viewed as an element in $H^0(X, f^*(\cV^* \otimes \cW))$.
By the  projection formula,  we have 
\[ f_*f^*(\cV^* \otimes \cW) \cong \cV^*\otimes \cW \otimes f_*\cO_X. \]
Hence it is enough to show that $f_*\cO_X\cong \cO_Y$. 

Let $V\subseteq Y$ be any non empty open subset and $U=f^{-1}(V)$.
We consider a holomorphic function $\sigma$ on $U$.
By assumption, there is an open dense subset $V_0\subseteq V$ such that $f$ has proper connected fibers over $V_0$. 
Hence $\sigma|_{U_0}$  descends to a holomorphic function $\mu_0$ on $V_0$, where $U_0=f^{-1}(V_0).$ 
We have to show that $\mu_0$ can be extended to a holomorphic function $\mu$ on $V$. 
By Riemann's Removable Singularity theorem, it is enough to show that $\mu_0$  is locally bounded over $V$. 
We also note that, by  Hartogs' theorem, we may shrink $V$ and assume that $f$ is equidimensional.

Let $y\in V$ be a point. Let $Z\subseteq X$ be the complete intersection of general very ample divisors such that $\dim Z = \dim Y$. 
By shrinking $V$ around $y$ if necessary, we may assume that  $Z\cap U$ is quasi-finite over $V$. 
Then the   morphism $$f|_{Z\cap U} \colon Z\cap U \to V$$ is an open morphism, with respect to the analytic Euclidean topology. 
Let $x\in Z\cap U$ be a point lying over $y$, and $U' \subseteq Z\cap U$ an Euclidean open neighborhood of $x$. 
Then $V' = f(U')$ is an  Euclidean open subset  of $V$ containing $y$. 
Since $\sigma$ is holomorphic, we may assume that $\sigma|_{U'}$ is bounded. 
Hence $\mu_0|_{V_0 \cap V'}$ is bounded. That is, $\mu_0$ is   bounded around $y$. 
This completes the proof.
\end{proof}

\section{Algebraically integrable foliations with semistable leaves and nef anticanonical classes}\label{section:semistable}

Algebraically integrable foliations with nef anticanonical classes induced by rational maps which are semistable in codimension one are well studied in \cite{Cao19}, \cite{CaoHoering2019} and \cite{CCM19}. 
Their results are crucial for the proofs of the main theorems in the present paper.
For the reader's convenience, we summarize  some of them here. 


Throughout this section, we consider the following situation. The morphism $\varphi\colon \Gamma \to Y$ is a fibration between projective manifold, and $\pi$ is a birational morphism onto a normal projective variety $X$. 
\begin{equation*} 
\begin{tikzcd}[column sep=large, row sep=large]
  \Gamma \ar[d,"{\varphi}"]      \ar[r,"\pi"]  & X   \\
 Y &  
\end{tikzcd}
\end{equation*}
We make the following  assumptions.
\begin{enumerate}
\item The $\pi$-exceptional locus $E$ is pure of codimension one, and it does not dominate the base $Y$. 
\item Every $\varphi$-exceptional divisor is $\pi$-exceptional.
\item There is a $\pi$-exceptional $\mathbb{Q}$-divisor $E''$ such that 
$$-K_{\Gamma/Y}+ E''$$
is   nef.
\item For every prime divisor $B\subseteq Y$, any non reduced component of $\varphi^*B$ is $\pi$-exceptional. 
\item There is a finite group $G$ acting on the varieties $\Gamma$, $Y$ and $X$, such that the morphisms $\varphi$ and $\pi$ are $G$-equivariant. 
Furthermore, the quotient $X/G$ is $\mathbb{Q}$-factorial.
\end{enumerate}

We recall that a prime divisor in $\Gamma$ is $\varphi$-exceptional if its image has codimension at least two in $Y$.  
If $E''=\sum a_iE_i$ is the decomposition into prime divisors, then we set  $|E''| = \sum |a_i|E_i$. 
The main objective of this section is to prove the following proposition.

\begin{prop}\label{prop:irred-fiber-semistable}
With the assumption above, for every prime divisor $B\subseteq Y$, there is at most one irreducible component of $\varphi^{-1}(B)$ which is not contained in the $\pi$-exceptional locus.
\end{prop}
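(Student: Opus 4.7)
The plan is to argue by contradiction. Suppose $\varphi^{-1}(B)$ contains two distinct non-$\pi$-exceptional components $D_1, D_2$. By hypothesis (4), both appear with multiplicity one in $\varphi^*B$, so
\[
\varphi^*B = D_1 + D_2 + \sum_j b_j E_j
\]
with the $E_j$'s $\pi$-exceptional. The crucial structural observation is that $\pi_*E'' = 0$, so by the projection formula $E''$ intersects trivially with any complete-intersection $\pi$-pullback class of complementary dimension; this will let us isolate $-K_{\Gamma/Y}$ in numerical tests.

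I would first cut down to a surface. Let $H$ be a very ample divisor on $X$, and let $S \subseteq \Gamma$ be a general complete intersection of $n-2$ members of $|\pi^*H|$, where $n = \dim \Gamma$. By Bertini together with hypothesis (1), $S$ is smooth, disjoint from $\supp E''$ and from every $E_j$, and meets each $D_i$ transversally in a reduced irreducible curve $C_i = D_i \cap S$. Then $\varphi|_S \colon S \to \varphi(S) =: T$ is a fibration onto a smooth curve, and over $B \cap T$ the fiber is exactly the reducible divisor $C_1 + C_2$.

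Next I would apply Zariski's lemma to the fiber $F = C_1 + C_2$: its intersection form on $\{C_1, C_2\}$ is negative semidefinite with kernel spanned by $F$, so $C_1^2, C_2^2 < 0$ and connectedness of $F$ forces $C_1 \cdot C_2 > 0$. Combining this with the adjunction formula $2 p_a(C_i) - 2 = K_S \cdot C_i + C_i^2$ and the iterated adjunction identity $K_S = K_\Gamma|_S + (n-2)\, \pi^*H|_S$, one writes each $K_{\Gamma/Y}|_S \cdot C_i$ in closed form. The nefness of $-K_{\Gamma/Y}|_S$, which holds because $E''|_S = 0$, then yields explicit numerical constraints on these intersection numbers.

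The main obstacle is that the pointwise inequalities $-K_{\Gamma/Y}|_S \cdot C_i \geq 0$ are consistent with the $C_i$'s being $(-1)$- or $(-2)$-curves, so the contradiction must come from a global comparison. The plan is to exploit the fact that, because both $D_1, D_2$ are non-$\pi$-exceptional, the scheme-theoretic intersection $D_1 \cap D_2$ has non-trivial pushforward to $X$, so the positive contribution $C_1 \cdot C_2 > 0$ cannot be absorbed by the $\pi$-exceptional correction $E''$; making this rigorous requires testing $-K_{\Gamma/Y}+E''$ against the codimension $n-2$ class $(\pi^*H)^{n-2}$ and tracking how $D_1 \cdot D_2 \cdot (\pi^*H)^{n-2}$ enters via adjunction. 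Alternatively, one may short-circuit the surface computation by invoking the semistability-in-codimension-one results of \cite{CaoHoering2019} and \cite{CCM19} for algebraically integrable foliations with nef anticanonical class, applied to the induced foliation on $X$, after checking that their hypotheses follow from (3) and (4).
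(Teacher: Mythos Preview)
Your surface approach has a genuine gap, and the difficulty is essentially the one you yourself flag: the inequalities $(-K_{\Gamma/Y}+E'')\cdot C_i \geqslant 0$ are perfectly compatible with the $C_i$ being $(-1)$- or $(-2)$-curves, and no amount of adjunction bookkeeping on $S$ produces a contradiction from nefness alone. Your attempt to close it (``$D_1\cap D_2$ has non-trivial pushforward to $X$, so $C_1\cdot C_2>0$ cannot be absorbed by $E''$'') does not lead anywhere concrete: $E''$ is only constrained to be $\pi$-exceptional, and $D_1\cap D_2$ can easily lie inside the exceptional locus even when $D_1,D_2$ do not. There are also preliminary problems with the reduction. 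Your $S$ is the $\pi$-preimage of a general complete-intersection surface $\bar S\subseteq X$; since $\pi(E_j)$ and $\pi(\supp E'')$ can have codimension exactly $2$, the surface $\bar S$ meets them in points and $S$ meets $E_j$, $\supp E''$ along curves, so the claim that $S$ avoids them is false. Finally, $\varphi|_S$ is a fibration onto a curve only when $\dim Y=1$; for $\dim Y\geqslant 2$ the map $\varphi|_S$ is generically finite onto a surface and Zariski's lemma does not apply. Your alternative of ``invoking the semistability-in-codimension-one results of \cite{CaoHoering2019} and \cite{CCM19}'' is essentially circular: those are exactly the results this section is reproving in the present generality.

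The paper's argument is of a completely different nature, and the missing idea is positivity of direct images. One constructs weakly positively curved sheaves $\cV_p=\varphi_*\cO_\Gamma(L_p)$ on $Y$ whose pulled-back $c_1$ is supported in the $\pi$-exceptional locus (Lemmas~\ref{lemma:positivity-direct-image}--\ref{lemma:V}, Corollary~\ref{cor:Vp}). Restricting along $j\colon C\to Y$, where $C$ is the strict transform of a very general complete-intersection curve in $X$, the $j^*\cV_p$ become numerically flat vector bundles, and Corollary~\ref{cor:morphism-flat} forces the multiplication maps $\Sym^p j^*\cV_1\to j^*\cV_p$ to be surjective. By Lemma~\ref{lemma:loc-trivial}, $U_C=\mathrm{Proj}_{\cO_C}\bigoplus_p j^*\cU_p$ is then a locally trivial family over $C$, hence has irreducible fibres. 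The linear system $j^*\cV_1$ defines a birational map $\Gamma_C\dashrightarrow U_C$ that separates points outside the $\pi$-exceptional locus (because $L_1$ is relatively very ample plus $\pi$-exceptional), so two non-$\pi$-exceptional components of a fibre of $\varphi$ over $C$ would map to two components of a fibre of $U_C\to C$, contradicting irreducibility. This is where the contradiction actually lives; it genuinely requires the analytic input of Lemma~\ref{lemma:positivity-direct-image}, not surface numerics.
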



The proof of the proposition will be postponed to the end of the section.   
We fix  a sufficiently ample, $G$-invariant   divisor $A$ in $\Gamma$, so that it is $\varphi$-relatively very ample, and that for each $p\geqslant 1$, the natural morphism 
\[ \mathrm{Sym}^p \varphi_{*} \cO_\Gamma(A) \to \varphi_* \cO_{\Gamma}(pA)  \]
is  surjective. 
Let $Y_0 \subseteq Y$ be a Zariski open subset such that the following properties hold.
\begin{enumerate}
\item[$\bullet$] The fibration $\varphi$ is equidimensional over $Y_0$.
\item[$\bullet$] For any prime divisor $B\subseteq Y$, the preimage $\varphi^{-1}(B)$ is   contained in the $\pi$-exceptional locus if and only if $B\subseteq Y\setminus Y_0$. 
\end{enumerate}
 
\begin{remark}
In \cite[Section 3]{CCM19}, $Y_0$ is defined as the largest open subset which satisfies the properties. However, for later use in this paper, we need to remove from $Y_0$ some closed subset of codimension at least two. Therefore we define $Y_0$ in this way.
\end{remark}

\begin{lemma}[{\cite[Lemma 3.5]{CCM19} }]
\label{lemma:positivity-direct-image}
Let $L$ be a $\varphi$-relatively big Cartier divisor in $\Gamma$ and $P$ a $\mathbb{Q}$-divisor in $Y$. Assume that $L-\varphi^*P$ is pseudoeffective. Then for any large enough integer $c$, 
\[\varphi_*\cO_{\Gamma}(L+cE'') - P\] is weakly positively curved if it is not zero. 
That is, for a fixed K\"ahler form $\omega_Y$ on Y, for a real  number $\epsilon>0$, there is a possibly singular Hermitian metric $h_\epsilon$ on $\varphi_*\cO_{\Gamma}(L+cE'')$ such that 
 \[  \sqrt{-1}\Theta_{h_\epsilon }(\varphi_*\cO_{\Gamma} (L+ cE''))  \succeq (\beta - \epsilon \omega_Y) \otimes \mathrm{Id}_{\varphi_*\cO_{\Gamma} (L + cE'')}\] over the locally free locus of $\varphi_*\cO_{\Gamma}(L+cE'')$. 
 Here $\beta$ is a smooth closed $(1,1)$-form representing $P$.
\end{lemma}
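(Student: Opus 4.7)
The plan is to argue by contradiction. Suppose that for some prime divisor $B\subseteq Y$ with $B\cap Y_0\neq\emptyset$, the preimage $\varphi^{-1}(B)$ contains two distinct non-$\pi$-exceptional irreducible components $D_1$ and $D_2$. Assumption~(4) forces both to appear with multiplicity one in $\varphi^*B$, so we may write
\[\varphi^*B \;=\; D_1+D_2+E_B,\]
where $E_B$ is an effective $\mathbb{Q}$-divisor whose support lies in the $\pi$-exceptional locus.

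The strategy is to feed this decomposition into Lemma~\ref{lemma:positivity-direct-image} via a carefully chosen test pair $(L,P)$ and then to contradict the resulting pseudoeffectivity by an explicit computation of the direct image near the generic point of $B$. Specifically, I would take $L=A+D_1$ with $A$ a sufficiently ample Cartier divisor on $\Gamma$, and $P=\epsilon B$ for a small positive rational number $\epsilon$. Then $L$ is $\varphi$-ample since $D_1$ is $\varphi$-vertical and $A|_F$ is ample on a general fiber $F$; in particular $L$ is $\varphi$-relatively big. The difference
\[L-\varphi^*P \;=\; A+(1-\epsilon)D_1-\epsilon D_2-\epsilon E_B\]
is pseudoeffective for $\epsilon$ small. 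Lemma~\ref{lemma:positivity-direct-image} therefore shows, for $c$ large enough, that $\varphi_*\mathcal{O}_\Gamma(L+cE'')\otimes\mathcal{O}_Y(-\epsilon B)$ is weakly positively curved, and passing to determinants yields a pseudoeffective class of the form $\det\varphi_*\mathcal{O}_\Gamma(L+cE'')-\epsilon\, r\, B$ on $Y$, where $r$ is the generic rank.

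The second step is to compute $\det\varphi_*\mathcal{O}_\Gamma(L+cE'')$ along $B$ directly. From the short exact sequence
\[0\to \mathcal{O}_\Gamma(A+cE'')\to \mathcal{O}_\Gamma(A+D_1+cE'')\to \mathcal{O}_{D_1}((A+D_1+cE'')|_{D_1})\to 0,\]
together with the analogous sequence obtained by swapping the roles of $D_1$ and $D_2$, and working over the equidimensional locus $Y_0$, one can isolate the contributions of $D_1$ and $D_2$ to the pushforward. The presence of a second non-$\pi$-exceptional component $D_2$ forces an extra cokernel supported on $B$ whose effect on the determinant is strictly smaller than the multiple of $B$ demanded by the pseudoeffectivity coming from Lemma~\ref{lemma:positivity-direct-image}. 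Combining the two calculations produces a relation of the form $-\alpha B+(\text{divisor not dominating }B)\succeq 0$ in the pseudoeffective cone of $Y$ for some $\alpha>0$, contradicting the fact that $B$ is a prime divisor meeting $Y_0$.

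The main obstacle I anticipate is the bookkeeping in the determinant computation: one must verify that the $\pi$-exceptional corrections $E''$ and $E_B$ do not contaminate the $B$-coefficient of $\det\varphi_*\mathcal{O}_\Gamma(L+cE'')$. This hinges on assumption~(1), that the $\pi$-exceptional locus does not dominate $Y$, together with the semistability refinements in \cite{CCM19} on which this section's results are modeled.
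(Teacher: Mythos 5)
You have not written a proof of the statement you were asked to prove. The lemma in question is an analytic positivity result: it asserts that, for $c$ large, the direct image $\varphi_*\cO_\Gamma(L+cE'')$ twisted by $-P$ admits a family of possibly singular Hermitian metrics with curvature bounded below by $\beta - \epsilon\omega_Y$, where $\beta$ represents $P$. A proof of this must produce such metrics — typically by appealing to the positivity of relative Bergman kernels and the Ohsawa--Takegoshi extension theorem in the form developed by P\u aun--Takayama, applied fiberwise and then twisted to absorb the pseudoeffective class $L - \varphi^*P$. Nothing in your write-up touches metrics, curvature, extension theorems, or the structure of the direct image as an analytic object.

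What you have actually drafted is an attack on Proposition~\ref{prop:irred-fiber-semistable} (the uniqueness of the non-$\pi$-exceptional component of $\varphi^{-1}(B)$), and moreover your argument \emph{invokes} Lemma~\ref{lemma:positivity-direct-image} as a tool in its second paragraph — so it is circular as a proof of that lemma. You have, in effect, swapped the role of hypothesis and conclusion. Note also that the paper does not reprove this lemma; it cites it verbatim from \cite[Lemma~3.5]{CCM19}, and the proof there goes through the machinery of singular metrics on relative pluricanonical bundles, not through any divisor-counting contradiction. To answer the actual question you would need to reproduce or adapt that analytic argument: show $L+cE'' - \varphi^*P$ is effective up to $\varphi$-vertical and $\pi$-exceptional corrections for $c$ large, construct a metric on $\cO_\Gamma(L+cE'')$ with curvature $\geqslant \varphi^*\beta - \epsilon'\omega_\Gamma$ outside a controlled locus, and push forward the resulting $L^2$-metric on the direct image, verifying the required curvature inequality on the locally free locus.
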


\begin{lemma}
\label{lemma:normalized-c_1}
Let $L$ be a $\varphi$-relatively big, $G$-invariant   divisor in $\Gamma$, and let $m>0$ be an integer. 
Then the $\mathbb{Q}$-divisor 
\[
L-\frac{1}{r}\varphi^*c_1(\varphi_*\cO_\Gamma(L+mE))
\] 
is the sum of a pseudoeffective divisor and a  $\pi$-exceptional divisor. 
Here $r$ is the rank of   $\varphi_*\cO_\Gamma(L+mE)$.
\end{lemma}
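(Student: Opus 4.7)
The plan is to argue by contradiction and derive a numerical contradiction using the positivity tools of Lemmas~\ref{lemma:positivity-direct-image} and~\ref{lemma:normalized-c_1}. Suppose $\varphi^{-1}(B)$ contains two distinct irreducible components $D_1$ and $D_2$ that are not $\pi$-exceptional. By hypothesis~(4) both appear with multiplicity one in $\varphi^*B$, so
\[
\varphi^*B = D_1 + D_2 + E_B,
\]
with $E_B$ an effective $\pi$-exceptional $\bbZ$-divisor. I will extract incompatible numerical data about $D_2$ on a suitably chosen test curve.

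First I fix a $\varphi$-ample divisor $A$ on $\Gamma$ and, for $m\gg 0$, consider the $\varphi$-relatively big divisor $L_m := mA+D_1$. Since $D_1$ is $\varphi$-vertical, the direct image $\cE_m := \varphi_*\cO_\Gamma(L_m)$ has the same generic rank $r$ as $\varphi_*\cO_\Gamma(mA)$. The identity $D_1 = \varphi^*B-D_2-E_B$ together with the projection formula identifies $\cE_m \cong \cO_Y(B)\otimes\varphi_*\cO_\Gamma(mA-D_2-E_B)$, hence
\[
c_1(\cE_m) = r\cdot B + c_1\bigl(\varphi_*\cO_\Gamma(mA-D_2-E_B)\bigr).
\]
Plugging this into Lemma~\ref{lemma:normalized-c_1} applied to $L_m$ and absorbing $E_B$ into the $\pi$-exceptional part yields
\[
mA - D_2 - \tfrac{1}{r}\varphi^*c_1\bigl(\varphi_*\cO_\Gamma(mA-D_2-E_B)\bigr) = P + N,
\]
where $P$ is pseudoeffective and $N$ is $\pi$-exceptional.

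Next I promote the pseudoeffective decomposition to a strict positivity statement. Observe that $(mA-D_2) - \varphi^*(-B) = mA + D_1 + E_B$ is effective for $m\gg 0$, so Lemma~\ref{lemma:positivity-direct-image} (applied with $L=mA-D_2$ and the auxiliary divisor $-B$) shows that $\varphi_*\cO_\Gamma(mA-D_2+cE'')\otimes \cO_Y(B)$ is weakly positively curved. Translating the weak positivity into a numerical inequality, this forces $c_1\bigl(\varphi_*\cO_\Gamma(mA-D_2-E_B)\bigr)$ to be, up to $\pi$-exceptional corrections, nef on curves avoiding the $\pi$-exceptional locus. Combined with the displayed relation of the previous paragraph, the left-hand side there must have nonnegative intersection with any such movable curve.

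The hard part will be the final intersection-theoretic step: I construct a movable curve $C$ on $\Gamma$ that (i)~avoids the $\pi$-exceptional locus, (ii)~meets $D_2$ transversely at a general point, and (iii)~has image in $Y$ disjoint from $\varphi(E_B)$. Such a $C$ exists because $D_2$ is not $\pi$-exceptional and dominates $B$, so $D_2$ moves in a family sweeping out an open subset of $\Gamma$. Intersecting the displayed relation with $C$ isolates the strictly negative term $-D_2\cdot C < 0$ on the left, in contradiction with the nonnegativity established via Lemma~\ref{lemma:positivity-direct-image}. The principal obstacle is the careful bookkeeping required to separate the contributions of $D_2$, of $\pi$-exceptional divisors, and of the direct-image Chern class in this comparison; the weak positivity must be sharp enough to overcome the difference $mA\cdot C - \tfrac{1}{r}\varphi^*c_1(\varphi_*\cO_\Gamma(mA))\cdot C$, which by Lemma~\ref{lemma:normalized-c_1} applied to $mA$ alone is itself only pseudoeffective plus $\pi$-exceptional.
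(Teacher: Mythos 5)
Your proposal does not address the stated lemma at all. Lemma~\ref{lemma:normalized-c_1} asserts that for any $\varphi$-relatively big Cartier divisor $L$, the $\mathbb{Q}$-divisor $L-\frac{1}{r}\varphi^*c_1(\varphi_*\cO_\Gamma(L))$ is pseudoeffective plus $\pi$-exceptional; it is a positivity property of a normalized relative polarization, quoted from \cite[Lemma 3.6]{CCM19} without proof. What you have written instead is an argument by contradiction whose hypothesis is ``$\varphi^{-1}(B)$ contains two distinct non-$\pi$-exceptional irreducible components $D_1$ and $D_2$'' and whose goal is to derive a numerical contradiction---that is, a proof sketch of Proposition~\ref{prop:irred-fiber-semistable}, a different statement further down in the same section. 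You have misidentified the target.

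The clearest symptom of the mix-up is that your argument invokes Lemma~\ref{lemma:normalized-c_1} itself as a tool---twice---on the way to the desired contradiction. One cannot use the statement to be proved as an ingredient in its own proof. Beyond the mislabeling, the route you sketch is also not the one the paper takes for Proposition~\ref{prop:irred-fiber-semistable}: there the argument restricts to a very general complete intersection curve $C$ in $X$, pulls the weakly positively curved sheaves $\cV_p$ back to a numerically flat setting on $C$, applies Corollary~\ref{cor:morphism-flat} and Lemma~\ref{lemma:loc-trivial} to build a locally trivial family $U_C$, and shows the induced birational map $g_C\colon \Gamma_C\dashrightarrow U_C$ only contracts $\pi$-exceptional divisors---no intersection-theoretic contradiction with a carefully chosen test curve is involved. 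To repair your submission you would need to start over and actually prove the quoted positivity statement, or, if the intent was to prove Proposition~\ref{prop:irred-fiber-semistable}, to relabel and then fill in the ``hard part'' you flag at the end, which as written is a heuristic, not an argument.
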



\begin{proof}
Let $A_Y$ be a $G$-invariant ample divisor in $Y$.
At the end of the proof of {\cite[Lemma 3.6]{CCM19}}, it was shown that,  for any integer $p >0$, there is an effective $\pi$-exceptional divisor $F_p$ such that the $\mathbb{Q}$-divisor
\[  
 L- \frac{1}{r}\varphi^*c_1(\varphi_*\cO_\Gamma(L+mE)) + \frac{1}{rp}\varphi^*A_Y + F_p
\]
is  $\mathbb{Q}$-linearly equivalent to an effective divisor $H_p$. 
Replacing   $F_p$ by the sum of its $G$-orbits, we may assume that  $F_p$ is $G$-invariant.  
Since $L$ and $E$ are $G$-invariant, and since $\varphi$ is $G$-equivariant, we see that, for any element $g\in G$, there is a linear equivalence 
\[ c_1(\varphi_*\cO_\Gamma(L+mE)) \sim  g^* c_1(\varphi_*\cO_\Gamma(L+mE)). \]
Hence,  replacing  $c_1(\varphi_*\cO_\Gamma(L+mE))$ by 
\[
\frac{1}{|G|} \sum_{g\in G} g^*c_1(\varphi_*\cO_\Gamma(L+mE)),
\] 
we may assume that it is represented by a $G$-invariant $\mathbb{Q}$-divisor.  
It then follows that 
$ H_p \sim_\mathbb{Q}  g^* H_p$ for any $g\in G$. 
Replacing $H_p$ by $\frac{1}{|G|} \sum_{g\in G} g^*H_p$, we may assume that  it is $G$-invariant as well.

Let $\rho\colon \Gamma \to X/G$  and $q\colon X \to X/G$  be the natural morphisms. 
We  see   that 
\begin{eqnarray*}
&& \pi_*( L- \frac{1}{r} \varphi^*c_1(\varphi_*\cO_\Gamma(L+mE))   + \frac{1}{rp} F_p) \\
&=& 
\pi_*( L- \frac{1}{r} \varphi^*c_1(\varphi_*\cO_\Gamma(L+mE)) )
\end{eqnarray*}
is a $G$-invariant $\mathbb{Q}$-divisor in $X$. 
Hence there are $\mathbb{Q}$-divisors  $D$, $A'_Y$ and $H'_p$ in $X/G$ such that  
$q^*H'_p = \pi_*H_p$, that $q^*A'_Y = \pi_* (\varphi^*A_Y)$ and that 
\[
q^*D  = \pi_*( L- \frac{1}{r} \varphi^*c_1(\varphi_*\cO_\Gamma(L+mE)) ).
\]
It follows that
\[ 
 D + \frac{1}{rp}A'_Y \sim_\mathbb{Q} H'_p \geqslant 0. 
\]
By tending $p$ to  the infinity, we deduce that   $D$ is pseudoeffective.  
Since $X/G$ is $\mathbb{Q}$-factorial, we conclude that there is some $\pi$-exceptional divisor $F$ such that 
\[
 L- \frac{1}{r} \varphi^*c_1(\varphi_*\cO_\Gamma(L+mE)) = \rho^*D + F.
\]
This completes the proof of the lemma.
\end{proof}

\begin{lemma}
\label{lemma:A-tilde}
There is some positive integer $m_0$ such that for  any integer $m\geqslant m_0$, the torsion-free sheaf
$\varphi_*\cO_\Gamma (A+mE)$ has the same rank $r_{m_0}$.
Moreover, for any effective $\pi$-exceptional divisor $\widetilde{E}$,  
 the natural morphism
\[ \det(\varphi_*\cO_\Gamma (A+m_0 E)) \to \det(\varphi_*\cO_\Gamma (A+m_0E + \widetilde{E}))\]
is an isomorphism over $Y_0.$
\end{lemma}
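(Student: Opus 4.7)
The plan is to analyze the increasing chain of subsheaves $\cE_m := \varphi_*\cO_\Gamma(A+mE)$ on $Y$ and show that the associated determinant chain stabilizes over $Y_0$.

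For the rank assertion, I will note that since $E$ is $\pi$-exceptional and hence does not dominate $Y$, its restriction $E|_{\Gamma_\eta}$ to the generic fiber $\Gamma_\eta$ vanishes. Thus $\cE_m \otimes k(Y) \cong H^0(\Gamma_\eta, A|_{\Gamma_\eta})$, a fixed finite-dimensional space whose dimension $r$ is independent of $m$ and positive for $A$ sufficiently ample. Any $m_0 \geqslant 0$ then meets the first requirement with $r_{m_0} = r$.

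For the determinant assertion, the inclusions $\cE_m \hookrightarrow \cE_{m+1}$ (obtained from multiplication by a local section of $\cO_\Gamma(E)$) induce inclusions $\det\cE_m \hookrightarrow \det\cE_{m+1}$ of rank-one torsion-free sheaves, producing an increasing chain of effective Weil divisor classes on the smooth open $Y_0$. My plan is to bound these classes uniformly in $m$ by applying Lemma \ref{lemma:normalized-c_1} to $L = A+mE$, which yields a decomposition
\begin{equation*}
A + mE - \tfrac{1}{r}\varphi^*c_1(\cE_m) \;=\; P_m + F_m
\end{equation*}
with $P_m$ pseudoeffective and $F_m$ a $\pi$-exceptional $\mathbb{Q}$-divisor. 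Intersecting this identity with curves cut out by very general complete intersections of hyperplanes, and using the defining property of $Y_0$ (every divisorial component of $\varphi(E) \cap Y_0$ carries a non-$\pi$-exceptional preimage and so already contributes to the support of $\det\cE_0$), should control the growth of $c_1(\cE_m)|_{Y_0}$. An increasing chain of effective Weil divisor classes on the Noetherian scheme $Y_0$, once bounded above, must stabilize, which yields the desired index $m_0$.

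Finally, for an arbitrary effective $\pi$-exceptional $\widetilde{E}$, one has $\widetilde{E} \leqslant kE$ for some integer $k > 0$ (since $E$ is reduced and supports every $\pi$-exceptional prime divisor), yielding the sandwich
\begin{equation*}
\det\cE_{m_0} \hookrightarrow \det\varphi_*\cO_\Gamma(A+m_0E+\widetilde{E}) \hookrightarrow \det\cE_{m_0+k}.
\end{equation*}
If $m_0$ is chosen in the stable range (and $m_0+k$ a fortiori is), the outer inclusion restricts to an isomorphism on $Y_0$, forcing both middle arrows to be isomorphisms there as well. The main obstacle I expect is extracting the uniform upper bound on $c_1(\cE_m)|_{Y_0}$: disentangling the divisorial contribution of $mE$ over $Y_0$ from its genuinely $\pi$-exceptional components, and doing so uniformly in $m$, is where the pseudoeffective-plus-exceptional decomposition of Lemma \ref{lemma:normalized-c_1} must be combined with the full strength of the defining hypotheses on $Y_0$ and on the commutative diagram $\Gamma \to X,\ \Gamma \to Y$.
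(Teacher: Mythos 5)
This lemma is cited from \cite[Proposition 3.7]{CCM19} and is not proved in the present paper, so there is no internal proof to compare your attempt against; I will therefore evaluate the plan on its own merits.

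Your treatment of the rank assertion is correct: since $E$ is $\pi$-exceptional and $\pi$ does not contract any fiber of $\varphi$ entirely, $E$ is vertical over $Y$ and vanishes on the generic fiber, so the rank is constant for all $m\geqslant 0$. Your reduction of the general $\widetilde E$ to the case $\widetilde E = kE$ by sandwiching is also fine.

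The gap is in the determinant stabilization, and it is concentrated in the sentence ``An increasing chain of effective Weil divisor classes on the Noetherian scheme $Y_0$, once bounded above, must stabilize.'' As written this begs the question: bounded above in what sense, and by what? If you only know that the intersection numbers $D_m\cdot\gamma$ are bounded for a fixed $1$-cycle class $\gamma$, that does \emph{not} force the chain of effective divisors $D_m = c_1(\cE_m)-c_1(\cE_{m_0})$ to stabilize on $Y_0$, because new components could keep appearing as long as they are $\gamma$-trivial. The missing (and essential) point is a positivity statement for a well-chosen curve class. Concretely, one should take $H_1,\dots,H_{n-1}$ very ample on $X$, let $\hat{C}\subseteq\Gamma$ be the strict transform of a very general complete intersection $H_1\cap\cdots\cap H_{n-1}$, and set $\gamma=\varphi_*\hat{C}$. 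This choice does three things simultaneously: (i) $\hat{C}$ misses the $\pi$-exceptional locus entirely, so $E\cdot\hat{C}=F_m\cdot\hat{C}=0$ and the identity of Lemma \ref{lemma:normalized-c_1} yields $\tfrac{1}{r}c_1(\cE_m)\cdot\gamma\leqslant A\cdot\hat{C}$ uniformly in $m$ (using $P_m\cdot\hat{C}\geqslant 0$ since $\pi^*H_i$ is nef); (ii) for any prime divisor $B$ meeting $Y_0$, the component of $\varphi^{-1}(B)$ that is \emph{not} $\pi$-exceptional maps to a divisor in $X$, which $\hat{C}$ meets, and since $\hat{C}$ has nonnegative intersection with every prime divisor one concludes $B\cdot\gamma>0$; (iii) $B\cdot\gamma\geqslant0$ for all prime $B$. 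Only with both the uniform bound from (i) and the strict positivity from (ii) does the increasing chain of effective divisors $D_m|_{Y_0}$ stabilize. You specified neither which ambient variety your ``hyperplanes'' live on (this matters: hyperplanes of $\Gamma$ would meet $E$ and the computation breaks), nor the positivity of $\gamma$ against divisors through $Y_0$. You flagged this as the ``main obstacle,'' correctly, but the obstacle is precisely where the proof lives, and the ``Noetherian'' shortcut you propose in its place is not a valid substitute.
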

 
\begin{proof}
The fact that the ranks are constant follows from the assumption that $E$ does not dominates $Y$. 
For the second assertion, it is enough to apply item (ii) of {\cite[Proposition 3.7]{CCM19}} by letting $p=1$.  
\end{proof}

\begin{lemma}
 \label{lemma:symA}
We set  $M= A+m_0 E$. 
Let $p>0$ be an integer and  $s$ the rank of  $\varphi_*\cO_{\Gamma}(pM))$. 
Then for any effective $\pi$-exceptional divisor $\widetilde{E}$,  
$$ \frac{1}{r_{m_0}} c_1(\varphi_* \cO_{\Gamma}(M)) - \frac{1}{ps}c_1(\varphi_* \cO_{\Gamma}(pM + \widetilde{E}))$$ 
is the sum of a pseudoeffective divisor and a divisor supported in $Y\setminus Y_0$.
\end{lemma}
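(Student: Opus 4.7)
The goal is a slope-like inequality: up to divisors supported in $Y \setminus Y_0$, the slope of $\varphi_*\cO_\Gamma(M)$ dominates $1/p$ times the slope of $\varphi_*\cO_\Gamma(pM+\widetilde{E})$. The plan is to extract a pseudoeffective class on $\Gamma$ from Lemma~\ref{lemma:normalized-c_1} applied to $pM+\widetilde{E}$, divide by $p$ so that the resulting Cartier divisor is of the form ``$M + (\pi$-exceptional$)$'', and then push forward via Lemma~\ref{lemma:positivity-direct-image}. The crucial asymmetry between $p$ and $1$ on the two sides of the inequality is produced by the fact that the pushforward of $M + (\pi$-exceptional$)$ has generic rank $r_{m_0}$ (by Lemma~\ref{lemma:A-tilde}) rather than $s$.

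More concretely, first I would apply Lemma~\ref{lemma:normalized-c_1} to the $\varphi$-relatively big Cartier divisor $pM+\widetilde{E}$, whose pushforward has generic rank $s$ (since $\widetilde{E}$ does not dominate $Y$). This produces an effective $\pi$-exceptional $\mathbb{Q}$-divisor $F$ such that
\[
M + F - \tfrac{1}{ps}\,\varphi^* c_1(\varphi_*\cO_\Gamma(pM+\widetilde{E}))
\]
is pseudoeffective. Rounding $F$ up to an effective integer $\pi$-exceptional divisor $\widetilde{E}''$ preserves pseudoeffectivity and makes $L \defeq M + \widetilde{E}''$ an honest $\varphi$-relatively big Cartier divisor. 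Choosing a positive integer $c$ sufficiently large and divisible enough that $cE''$ is an integer divisor, Lemma~\ref{lemma:positivity-direct-image} then gives that
\[
\varphi_*\cO_\Gamma(M + \widetilde{E}'' + cE'') - \tfrac{1}{ps}c_1(\varphi_*\cO_\Gamma(pM+\widetilde{E}))
\]
is weakly positively curved.

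To conclude, I would apply Lemma~\ref{lemma:A-tilde} to the effective $\pi$-exceptional divisor $\widetilde{E}'' + cE''$: the pushforward $\varphi_*\cO_\Gamma(M + \widetilde{E}'' + cE'')$ has generic rank $r_{m_0}$, and its determinant agrees with $\det\varphi_*\cO_\Gamma(M)$ over $Y_0$, so $c_1(\varphi_*\cO_\Gamma(M + \widetilde{E}'' + cE'')) - c_1(\varphi_*\cO_\Gamma(M))$ is supported in $Y\setminus Y_0$. Taking the first Chern class of the weakly positively curved twisted sheaf (which is pseudoeffective, with multiplicity of the twist equal to the rank $r_{m_0}$) and dividing by $r_{m_0}$, we obtain exactly
\[
\tfrac{1}{r_{m_0}} c_1(\varphi_*\cO_\Gamma(M)) - \tfrac{1}{ps} c_1(\varphi_*\cO_\Gamma(pM+\widetilde{E})) = \text{pseudoeffective} + \text{(supported in } Y\setminus Y_0\text{)}.
\]

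The main obstacle is keeping the $\mathbb{Q}$-divisor bookkeeping clean: one must land on a Cartier divisor whose pushforward has generic rank exactly $r_{m_0}$ rather than some larger number coming from a higher multiple of $M$, so one cannot simply clear denominators by passing to $nM$. The trick above---replace $F$ by a round-up $\widetilde{E}''$ and choose $c$ divisible by the denominator of $E''$---circumvents this because adding any further effective $\pi$-exceptional divisor to $M$ neither destroys pseudoeffectivity nor changes the generic rank $r_{m_0}$ or the determinant on $Y_0$, by Lemma~\ref{lemma:A-tilde}.
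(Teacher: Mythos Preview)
Your strategy is the paper's strategy: apply Lemma~\ref{lemma:normalized-c_1} to $pM+\widetilde{E}$, divide by $p$, push forward via Lemma~\ref{lemma:positivity-direct-image}, take determinants, and then invoke Lemma~\ref{lemma:A-tilde} to identify $c_1$ over $Y_0$. (The paper routes through $pM+pkE$ first and passes to a general $\widetilde{E}$ only at the end, but this difference is cosmetic.)

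One point needs repair. You apply Lemma~\ref{lemma:A-tilde} to ``the effective $\pi$-exceptional divisor $\widetilde{E}''+cE''$'', but $E''$ is only a $\pi$-exceptional $\mathbb{Q}$-divisor and may well have negative coefficients; since $c$ is taken large, there is no reason for $\widetilde{E}''+cE''$ to be effective, and making $c$ divisible by the denominators of $E''$ does not help. The paper patches this by replacing $cE''$ with $c|E''|$ \emph{after} taking determinants: the inclusion
\[
\varphi_*\cO_\Gamma(M+\widetilde{E}''+cE'') \hookrightarrow \varphi_*\cO_\Gamma(M+\widetilde{E}''+c|E''|)
\]
of torsion-free sheaves of the same generic rank $r_{m_0}$ can only increase $c_1$, so the pseudoeffectivity of the twisted determinant persists, and now $\widetilde{E}''+c|E''|$ is genuinely effective and Lemma~\ref{lemma:A-tilde} applies. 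With this one-line fix your argument goes through.
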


\begin{proof}
By replacing $A$ by $pA$ in Lemma \ref{lemma:A-tilde}, we deduce that there is an integer $k>0$ such that if $m\geqslant k$, then the morphism
\[ \det(\varphi_*\cO_\Gamma (pM+kE)) \to \det(\varphi_*\cO_\Gamma (pM+mE))\]
is an isomorphism  over $Y_0$.  

Since $E$ is  $\pi$-exceptional, it does not dominate $Y$ by assumption. 
Therefore $\varphi_*\cO_\Gamma (pM + pkE)$ has the same rank as $\varphi_*\cO_\Gamma (pM)$, which is  $s$. 
By applying Lemma \ref{lemma:normalized-c_1} (with $L=pM$), there is some integral  effective $\pi$-exceptional divisor $Q$ such that the $\mathbb{Q}$-divisor 
\[M + kE + Q  - \frac{1}{ps} \varphi^*c_1(\varphi_*\cO_\Gamma (pM + pkE))\] 
 is pseudoeffective. 
Hence by  Lemma \ref{lemma:positivity-direct-image}, there is some integer $c>0$ such that 
$$\varphi_*\cO_{\Gamma}(M + kE + Q + cE'') - \frac{1}{ps} c_1(\varphi_*\cO_\Gamma (pM + pkE))$$
is weakly positively curved. 
That is, for a fixed K\"ahler form $\omega$ on $Y$, for each $\epsilon > 0$, there is some possibly singular Hermitian metric $h_\epsilon$ on $\varphi_*\cO_{\Gamma}(M + kE + Q + cE'')$
such that \[  \sqrt{-1}\Theta_{h_\epsilon }(\varphi_*\cO_{\Gamma} (M + kE + Q + cE''))  \succeq (\beta - \epsilon \omega_Y) \otimes \mathrm{Id}_{\varphi_*\cO_{\Gamma} (M + kE + Q + cE'')}\]
over the locally free locus of $\varphi_*\cO_{\Gamma} (M + kE + Q + cE'')$, 
where $\beta$ is a smooth $(1,1)$-from representing $\frac{1}{ps} c_1(\varphi_*\cO_\Gamma (pM + pkE))$.
By taking the determinant, we obtain that 
\[ c_1(\varphi_*\cO_{\Gamma} (M + kE + Q + cE'')) - \frac{r_{m_{0}}}{ps} c_1(\varphi_*\cO_\Gamma (pM + pkE))\]
is pseudoeffective.
Then \[ c_1(\varphi_*\cO_{\Gamma} (M + kE + Q + c|E''|)) - \frac{r_{m_0}}{ps} c_1(\varphi_*\cO_\Gamma (pM + pkE))\]
is pseudoeffective as well.

By Lemma \ref{lemma:A-tilde}, we see that $c_1(\varphi_*\cO_{\Gamma} (M + kE + Q + c|E''|)) - c_1(\varphi_*\cO_{\Gamma} (M))$ is supported in $Y\setminus Y_0$. 
Hence \[ c_1(\varphi_*\cO_{\Gamma} (M)) - \frac{r_{m_0}}{ps} c_1(\varphi_*\cO_\Gamma (pM + pkE))\] is the sum of a pseudoeffective divisor and a divisor supported in $Y\setminus Y_0$.

Finally, if  $\widetilde{E}$ is an arbitrary effective $\pi$-exceptional divisor, then there is some integer $k'\geqslant k$ such that $\widetilde{E}\leqslant pk'E.$ 
It follows that $$c_1(\varphi_*\cO_\Gamma (pM + pk'E)) - c_1(\varphi_*\cO_\Gamma (pM + \widetilde{E}))$$ is effective. 
From the assumption of the first paragraph, we see that 
\[c_1(\varphi_*\cO_\Gamma (pM + pkE)) - c_1(\varphi_*\cO_\Gamma (pM + pk'E))\] 
is supported in $Y\setminus Y_0$. 
Thus we deduce that 
$$ c_1(\varphi_* \cO_{\Gamma}(M)) - \frac{r_{m_0}}{ps}c_1(\varphi_* \cO_{\Gamma}(pM + \widetilde{E}))$$ 
is the sum of a pseudoeffective divisor and a divisor supported in $Y\setminus Y_0$. 
\end{proof}

We define the following $\mathbb{Q}$-divisor on $\Gamma$ 
\[\widetilde{A} = A+m_0E - \frac{1}{r_{m_0}}\varphi^*c_1(\varphi_*\cO_\Gamma (A+m_0E)) = M-\frac{1}{r_{m_0}}\varphi^*c_1(\varphi_*\cO_\Gamma (M)).\] 
By Lemma \ref{lemma:normalized-c_1}, there is some integral $\pi$-exceptional effective divisor $F$ such that $\widetilde{A}+F$ is pseudoeffective.

\begin{lemma}\label{lemma:V}
With the notation above, let $p> 0$ be an integer. 
Then   the direct image 
\[\cV = \varphi_*\cO_\Gamma (c|E''|+pr_{m_0}(\widetilde{A}+F))\]
is  weakly positively curved for any integer $c$ large enough. 
Moreover, $c_1((\varphi^*\cV)^{**})$ is supported in the $\pi$-exceptional locus, in the following sense. 
For any ample  divisors $D_1,...,D_{n-1}$ on $X$, we have $$c_1((\varphi^*\cV)^{**})\cdot \pi^*D_1 \cdots \pi^*D_{n-1} =0,$$ where $n$ is the dimension of $\Gamma$. 
\end{lemma}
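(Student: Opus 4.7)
The plan is to deduce the weak positive curvature of $\cV$ from Lemma~\ref{lemma:positivity-direct-image}, and then combine the resulting pseudoeffectivity of $c_1(\cV)$ with Lemma~\ref{lemma:symA} and the definition of $Y_0$ to obtain the intersection-theoretic vanishing.

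For the first assertion, write
$$pr_{m_0}(\widetilde{A}+F) \;=\; pr_{m_0}(M+F) - p\,\varphi^*c_1(\varphi_*\cO_\Gamma(M)),$$
set $L := pr_{m_0}(M+F) + c(|E''|-E'')$, which differs from $pr_{m_0}(M+F)$ by the effective $\pi$-exceptional correction $c(|E''|-E'')$, and put $P := p\,c_1(\varphi_*\cO_\Gamma(M))$. Then $L-\varphi^*P = pr_{m_0}(\widetilde{A}+F)+c(|E''|-E'')$ is pseudoeffective by the construction of $F$. Lemma~\ref{lemma:positivity-direct-image} then yields, for $c$ sufficiently large, that $\varphi_*\cO_\Gamma(L+cE'')-P$ is weakly positively curved. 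But $L+cE'' = pr_{m_0}(M+F)+c|E''|$, so the projection formula on the smooth base $Y$ identifies $\varphi_*\cO_\Gamma(L+cE'')\otimes \cO_Y(-P)$ with $\cV$, proving the first assertion.

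For the second assertion, the weak positive curvature of $\cV$ implies that $c_1(\cV)$ is pseudoeffective. On the other hand, apply Lemma~\ref{lemma:symA} with $p$ replaced by $pr_{m_0}$ and with $\widetilde{E}:=c|E''|+pr_{m_0}F$, an effective $\pi$-exceptional divisor. Writing $r=\rk\cV=\rk\varphi_*\cO_\Gamma(pr_{m_0}M)$ (the $\pi$-exceptional terms do not alter the generic rank), the lemma yields
$$\tfrac{1}{r_{m_0}}c_1(\varphi_*\cO_\Gamma(M))-\tfrac{1}{pr_{m_0}r}c_1(\varphi_*\cO_\Gamma(pr_{m_0}M+\widetilde{E})) = P_0+S_0,$$
with $P_0$ pseudoeffective and $S_0$ supported in $Y\setminus Y_0$. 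Multiplying by $pr_{m_0}r$ and invoking the projection-formula identity $c_1(\cV)=c_1(\varphi_*\cO_\Gamma(pr_{m_0}M+\widetilde{E}))-pr\cdot c_1(\varphi_*\cO_\Gamma(M))$, we obtain $-c_1(\cV)=P_1+S_1$ with $P_1$ pseudoeffective and $S_1$ a divisor supported in $Y\setminus Y_0$.

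To finish, pull back to $\Gamma$. By the definition of $Y_0$, the divisor $\varphi^*S_1$ is supported in the $\pi$-exceptional locus, and any $\pi$-exceptional divisor has trivial intersection with $\pi^*D_1\cdots\pi^*D_{n-1}$ (since $\pi_*$ annihilates $\pi$-exceptional divisors). Consequently
$$(\varphi^*c_1(\cV)+\varphi^*P_1)\cdot \pi^*D_1\cdots\pi^*D_{n-1}=0.$$
Combining this with the pseudoeffectivity of $c_1(\cV)$ and of $P_1$, each paired against the nef class $\pi^*D_1\cdots\pi^*D_{n-1}$, forces $\varphi^*c_1(\cV)\cdot\pi^*D_1\cdots\pi^*D_{n-1}=0$; smoothness of $\Gamma$ then gives $c_1((\varphi^*\cV)^{**})=\varphi^*c_1(\cV)$ and completes the proof. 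The main technical subtlety is the absorption of $c(|E''|-E'')$ into $L$ in the first step: one must verify that the threshold for $c$ in Lemma~\ref{lemma:positivity-direct-image} can be chosen consistently with the modification, which is workable since the modification is effective and $\pi$-exceptional and therefore only strengthens the pseudoeffectivity hypothesis.
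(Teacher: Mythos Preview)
Your overall strategy matches the paper's, and the second half (the intersection-theoretic vanishing) is essentially the paper's argument: $c_1(\cV)$ is pseudoeffective from weak positive curvature, Lemma~\ref{lemma:symA} expresses $-c_1(\cV)$ as pseudoeffective plus something supported in $Y\setminus Y_0$, and pulling back to $\Gamma$ kills the latter against $\pi^*D_1\cdots\pi^*D_{n-1}$. One small inaccuracy: the equality $c_1((\varphi^*\cV)^{**})=\varphi^*c_1(\cV)$ is not literally true, since $\cV$ may fail to be locally free over a codimension-two set of $Y$ whose preimage contains $\varphi$-exceptional divisors; the two classes differ by something $\varphi$-exceptional, hence $\pi$-exceptional by assumption, so the conclusion is unaffected. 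The paper phrases this correctly by saying both $\pm c_1((\varphi^*\cV)^{**})$ are pseudoeffective plus $\pi$-exceptional.

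The first half, however, has a real gap. You set $L=pr_{m_0}(M+F)+c(|E''|-E'')$, so $L$ already depends on $c$, and then invoke Lemma~\ref{lemma:positivity-direct-image} to conclude that $\varphi_*\cO_\Gamma(L+cE'')-P$ is weakly positively curved ``for $c$ sufficiently large.'' But the lemma, as stated, gives a threshold depending on the fixed divisor $L$; you are asking that the threshold for $L_c$ be at most $c$ itself, and nothing in the hypotheses guarantees this. Your closing remark that the modification ``only strengthens the pseudoeffectivity hypothesis'' addresses the wrong issue: the hypothesis $L_c-\varphi^*P$ pseudoeffective is indeed fine for every $c$, but the problem is the dependence of the \emph{conclusion's} threshold on $L_c$.

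The paper sidesteps this cleanly: it applies Lemma~\ref{lemma:positivity-direct-image} with the $c$-independent divisor $pr_{m_0}(\widetilde{A}+F)$ and $P=0$, obtaining that $\varphi_*\cO_\Gamma(cE''+pr_{m_0}(\widetilde{A}+F))$ is weakly positively curved for large $c$. It then observes that the natural inclusion
\[
\varphi_*\cO_\Gamma\bigl(cE''+pr_{m_0}(\widetilde{A}+F)\bigr)\hookrightarrow \varphi_*\cO_\Gamma\bigl(c|E''|+pr_{m_0}(\widetilde{A}+F)\bigr)=\cV
\]
is a generic isomorphism (since $|E''|-E''$ is effective $\pi$-exceptional, hence vertical), and invokes \cite[Proposition~2.5]{CaoHoering2019} to transfer weak positive curvature across such an injection. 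This is the step your argument is missing; once you insert it, the circular dependence on $c$ disappears.
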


\begin{proof}
Since $\widetilde{A}+F$ is pseudoeffective, by  Lemma \ref{lemma:positivity-direct-image}, the direct image  
$$\varphi_*\cO_\Gamma (cE''+pr_{m_0}(\widetilde{A}+F))$$ 
is weakly positively curved for any large enough integer $c$. 
We note that there is a natural injective morphism   
$$\varphi_*\cO_\Gamma (cE''+pr_{m_0}(\widetilde{A}+F)) \to \varphi_*\cO_\Gamma (c|E''|+pr_{m_0}(\widetilde{A}+F)),$$ 
which is generically an isomorphism.
Hence $\cV$  is weakly positively curved as well by \cite[Proposition 2.5]{CaoHoering2019}. 

To finish the proof, we fix some $c$ so that $\cV$ is weakly positively curved. 
Then $c_1( \cV)$ is pseudoeffective.  
By definition,   we have
\[
c|E''|+pr_{m_0}(\widetilde{A}+F) = c|E''| + pr_{m_0}F + pr_{m_0} M - p\varphi^*c_1(\varphi_*(\cO_\Gamma(M) )). 
\]
By applying the projection formula, this  implies that 
\[-c_1(\cV) = sp c_1(\varphi_*\cO_{\Gamma}(M)) -c_1(\varphi_*\cO_{\Gamma}(c|E''|+pr_{m_0}F +pr_{m_0}M)),\]
where $s$ is the rank of $\cV$. 
By Lemma \ref{lemma:symA},  we obtain that
 $-c_1(\cV)$ is the sum of a pseudoeffective divisor and some divisor supported in $Y\setminus Y_0$. 
Since every $\varphi$-exceptional divisor is  also $\pi$-exceptional, and since every divisor in $\Gamma$ lying over $Y\setminus Y_0$ is $\pi$-exceptional,  it follows that   both 
$c_1((\varphi^*\cV)^{**})$  and $-c_1((\varphi^*\cV)^{**})$ can be written as the sum of a pseudoeffective divisor and a $\pi$-exceptional divisor.
Therefore $$c_1((\varphi^*\cV)^{**})\cdot \pi^*D_1 \cdots \pi^*D_{n-1} =0$$  for any ample  divisors $D_1,...,D_{n-1}$ on $X$. 
\end{proof}

We can then deduce the following corollary. 
The additional condition $c_i+c_j \leqslant c_{i+j}$ can be obtained by induction on $p$.

\begin{cor}\label{cor:Vp}
We set $c_0=0$. There is a sequence $\{c_p\}_{p> 0}$ of nonnegative integers such that  the direct image sheaves 
\[\mathcal{V}_{p} = \varphi_*\cO_\Gamma (c_{p}|E''|+pr_{m_0}(\widetilde{A}+F))\]
are weakly positively curved.
Moreover, $c_1((\varphi^*\cV_p)^{**})$ is supported in the $\pi$-exceptional locus, and $c_i+c_j \leqslant c_{i+j}$ for any $i,j \geqslant  0$. 
\end{cor}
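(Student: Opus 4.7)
The plan is to construct $\{c_p\}_{p > 0}$ by induction on $p$, with the induction amounting to bookkeeping only. For each $p \geq 1$, Lemma \ref{lemma:V} (applied to the fixed pseudoeffective $\widetilde{A}+F$ with the parameter $p$) provides a threshold $c(p) > 0$ such that whenever $c \geq c(p)$, the direct image
\[ \varphi_*\cO_\Gamma\bigl(c\,|E''| + p\,r_{m_0}(\widetilde{A}+F)\bigr) \]
is weakly positively curved, and moreover $c_1$ of its reflexive pullback to $\Gamma$ is $\pi$-exceptional in the intersection-theoretic sense of the lemma. Both conclusions of the corollary about $\cV_p$ will then be automatic as soon as we arrange $c_p \geq c(p)$.

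I would set $c_1 \defeq c(1)$ and, for $p \geq 2$, define
\[ c_p \defeq \max\!\left( c(p),\ \max_{0 < i < p}\,\bigl( c_i + c_{p-i} \bigr) \right). \]
The first argument of the outer maximum enforces the analytic hypothesis of Lemma \ref{lemma:V}, while the second forces $c_i + c_{p-i} \leq c_p$ for every $0 < i < p$. For the full superadditivity $c_i + c_j \leq c_{i+j}$ with $i, j \geq 0$, the cases $i = 0$ or $j = 0$ reduce to $c_p \leq c_p$ since $c_0 = 0$, and the cases $i, j > 0$ are built into the definition of $c_{i+j}$. The weak positivity of $\cV_p$ and the vanishing of $c_1((\varphi^*\cV_p)^{**})$ against products $\pi^*D_1 \cdots \pi^*D_{n-1}$ of ample classes then follow directly from the second part of Lemma \ref{lemma:V}.

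No substantive obstacle arises, because Lemma \ref{lemma:V} already contains the analytic content (positivity of the direct image via the singular metric coming from $\cO_\Gamma(c|E''|)$, together with the determinant estimate from Lemma \ref{lemma:symA}). The only task left for the corollary is to choose the thresholds so that they are simultaneously large enough for each $p$ \emph{and} superadditive, and the straightforward inductive maximum above accomplishes exactly this.
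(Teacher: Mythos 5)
Your proof is correct and is exactly the intended argument behind the paper's one-line remark ``By induction on $p$''; the recursive definition $c_p = \max\bigl(c(p),\ \max_{0<i<p}(c_i+c_{p-i})\bigr)$ is the natural way to obtain a superadditive sequence while staying above the thresholds supplied by Lemma~\ref{lemma:V}. Since Lemma~\ref{lemma:V} guarantees both the weak positive curvature and the $\pi$-exceptional support of $c_1((\varphi^*\cV_p)^{**})$ for \emph{every} $c$ above the threshold $c(p)$, both conclusions of the corollary carry over automatically to this choice of $c_p$.
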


\begin{remark}
\label{remark:linear-system}
The sheaves $\cV_p$ carry geometric properties of $\Gamma$ as follows.  
Let 
\[
L_p=c_p|E''|+pr_{m_0}(\widetilde{A}+F)
\] 
for all $p\geqslant 0$.
Then  $\cV_p$ can be viewed as the complete $\varphi$-relative linear system of the divisor $L_p$.
In particular, $\cV_1$ induces a rational map $g\colon \Gamma \dashrightarrow \mathbb{P}(\cV_1)$. 
Since $\widetilde{A}$ is sufficiently ample on general fiber of $\varphi$, and since neither $E''$ nor $F$ dominates $Y$, we see that $L_1$ is sufficiently ample on general fibers of $\varphi$. 
It follows that $g$ is a birational map to its image. We denote by $Z$ the (closure of the) image of $g$.
We have the following morphisms of graded $\cO_Y$-algebras
\[\bigoplus_{p\geqslant 0}{\Sym}^p \cV_1 \to \bigoplus_{p\geqslant 0} \varphi_*\cO_\Gamma (pL_1)  \to  \bigoplus_{p\geqslant 0} \cV_p.\]
If $\cA$ is the image of the first arrow above, then $Z \cong \mathrm{Proj}_{\cO_Y}\cA$.
In general,  it is   not trivial to compute $\cA$ and hence $Z$. 
In the situations considered in this paper, we manage to prove that  $\mathcal{A} =  \bigoplus_{p\geqslant 0} \mathcal{V}_p$  over some open dense subset of $Y$.
\end{remark}

\begin{cor}\label{cor:smooth-morphism}
Assume  that $\Gamma = X$ and that $\varphi$ is equidimensional. Then $X$ is   a locally trivial family over $Y$.
\end{cor}

\begin{proof} 
Under the assumption,  $F$ and  $E''$ are all zero. 
Let  $L_p=pr_{m_0}(\widetilde{A})$  for all $p\geqslant 0$.
Then  $L_1$ is $\varphi$-relatively very ample, $L_p = pL_1$, and $\Gamma \cong  \mathrm{Proj}_{\cO_{Y}} \bigoplus_{p\geqslant 0} \cV_p$. 
Moreover, thanks to Corollary \ref{cor:Vp},  each $\cV_p$ is reflexive, weakly positively curved, and  with zero first Chern class. Hence they are numerically flat vector bundles by  \cite[Proposition 2.6]{CCM19}. 
We note that  the morphism 
$ {\Sym}^p \cV_1 \to   \cV_p $
is   surjective for all $p$ since $A$ is sufficiently ample.  
Thus  $\Gamma    = \mathrm{Proj}_{\cO_{Y}} \bigoplus_{p\geqslant 0} \cV_p$ 
  is a locally trivial family after  Lemma \ref{lemma:loc-trivial}.
This completes the proof of the corollary.
\end{proof}

Now we will prove Proposition \ref{prop:irred-fiber-semistable}.

\begin{proof}[{Proof of Proposition \ref{prop:irred-fiber-semistable} }]

We write $L_p=c_p|E''|+pr_{m_0}(\widetilde{A}+F)$ for every integer $p \geqslant  0$. Then $pL_1 \leqslant L_p$ for all $p$ by construction. 
We denote by $\cU_p$ the direct image sheaf $\varphi_*\cO_\Gamma(pL_1)$.
Then there are natural morphisms of coherent sheaves on $Y$
\[{\Sym}^p \cV_1 \to \cU_p \to \cV_p.\]
Since $L_1$ is sufficiently ample on general fiber of $\varphi$, and since $E''$ does not dominate $Y$, we see that the morphisms above are generically surjective.


We recall the assumption that every $\varphi$-exceptional divisor is $\pi$-exceptional. 
Let $C\subseteq \Gamma$ be the strict transform of a very general complete intersection curve in $X$. 
Then $\varphi^*\mathcal{V}_{p}$ and $\varphi^*\cU_p$ are all locally free around $C$ and $c_1(\varphi^*\mathcal{V}_{p}|_C)=0$.
Let $j\colon C\to Y$ be the natural morphism. 
Then the weakly positively curved  sheaves $j^*\mathcal{V}_{p}$ are numerically flat vector bundles by \cite[Proposition 2.11]{CaoHoering2019}.  
We may assume that they are flat bundles with connections satisfying the conditions of Theorem \ref{thm:numflat=flat}.  
Therefore the generically surjective morphisms
\[ j^*{\Sym}^p\mathcal{V}_{1} \to j^*\cV_{p} \]
must be surjective by Corollary \ref{cor:morphism-flat}.  
As a consequence, the morphisms $$j^*{\Sym}^p \cV_1 \to j^*\cU_p$$ are  surjective morphisms between numerically flat vector bundles as well, and $j^*\cU_p \cong j^*\cV_p$.
Then by Corollary \ref{cor:morphism-flat} and Lemma \ref{lemma:loc-trivial},  the variety
\[U_C=  \mathrm{Proj}_{\cO_{C}} \bigoplus_{p \geqslant 0} j^*\cU_p\]
is a locally trivial  family over $C$.

Let $\Gamma_C=\Gamma \times_Y C$. We consider the following commutative diagram.
\begin{equation*} 
\begin{tikzcd}[column sep=large, row sep=large]
  \Gamma_C \ar[d,"{\varphi}_C"]      \ar[r,"i"]  & \Gamma \ar[d,"\varphi"]   \\
 C \ar[r,"j"] &   Y
\end{tikzcd}
\end{equation*}
There are natural injective morphisms
\[ j^*\cV_p \to (\varphi_C)_*(i^*\cO_\Gamma(L_p))  \mbox{ and }  j^*\cU_p \to (\varphi_C)_*(i^*\cO_\Gamma(pL_1)),\]
which are all generically surjective.
Indeed, $j^*\cU_p$ can be viewed as a linear subsystem of  the $\varphi$-relative complete linear system $(\varphi_C)_*(i^*\cO_\Gamma(pL_1))$.
We obtain hence a  rational map
$$g_C\colon \Gamma_C \dashrightarrow U_C $$ which is induced by the  $\varphi$-relative linear subsystem $j^*\cU_1 \subseteq (\varphi_C)_*(i^*\cO_\Gamma(L_1))$.

Since $L_1$ is the sum of a $\varphi$-relatively sufficiently ample divisor, and an effective $\pi$-exceptional divisor, it follows that the  $\varphi$-relative linear system $\cU_1$ separates points of $\Gamma$ lying  outside the $\pi$-exceptional locus.
Thus, if $D_C$ is a prime divisor in $\Gamma_C$ which is contracted by $g_C$, then $i(D_C)$ is contained in the $\pi$-exceptional locus. 
We also remark that general fibers of $\varphi_C$ and general fibers of $U_C \to C$ are the same. 
Hence every fibers of the locally trivial family $U_C \to C$ is irreducible.  
Thus, for every point $c\in C$, the fiber $\varphi_C^{-1}\{c\}$ has at most one irreducible component which is not $g_C$-exceptional.   
In conclusion, such a fiber has at most  one irreducible component which is not contained in the $\pi$-exceptional locus.    

For a  prime divisor $B \subset Y$, if $B \cap Y_0 \neq \emptyset$, we denote by $B_1,...,B_t$  the irreducible components of $\varphi^*B.$ 
Assume that $B_1$ is not contained in the $\pi$-exceptional locus.  
Then $B_1$ dominates $B$. 
Since $C$ is very general, we may assume that $j(C)$ meets a general point of $B$.   
The previous paragraph then implies that, for $i\neq 1$, either $B_i\cap \Gamma_C$ is contained in the $\pi$-exceptional locus, or $B_i$ is $\varphi$-exceptional. 
In the later case, $B_i$ is also $\pi$-exceptional. 
If $B$ is  contained in $Y\setminus Y_0$, then $\varphi^{-1}(B)$ is contained in the $\pi$-exceptional locus by assumption. 
This completes the proof. 
\end{proof}

\section{Construction of  semistable reductions} \label{section:construction}
Throughout this section, let $\cF$ be an algebraically integrable foliation on a projective manifold $X$ with  $-K_\cF$ nef.  
Assume that $\cF$ has a compact leaf.
Let $f\colon U\to V$ be the family of leaves, with $U$ and $V$ normal.
\begin{equation*} 
\begin{tikzcd}[column sep=large, row sep=large]
  U  \ar[d,"f"]\ar[r,"e"] & X\\
  V &
\end{tikzcd}
\end{equation*} 
Then $f$ is smooth over some open dense subset of $V$. 
The goal of this section is to construct an appropriate semistable reduction, which will serve the proofs  of the main theorems.
We first observe the following statement.

\begin{lemma}\label{lemma:irreducible:fiber}
With the notation above, for any prime divisor $B\subseteq V$, then  $e(f^{-1}(B))$ has at most one irreducible component  of codimension one. 
\end{lemma}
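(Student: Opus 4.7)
The plan is to apply Proposition~\ref{prop:irred-fiber-semistable} to a suitably constructed birational model of the family of leaves. Starting from $f\colon U\to V$ and $e\colon U\to X$, I would first take a resolution of singularities $\nu\colon Y\to V$ with $Y$ a smooth projective variety, and then, using Raynaud--Gruson flattening followed by a further desingularization, construct a smooth projective variety $\Gamma$ together with a birational morphism $\mu\colon \Gamma\to U$ and an equidimensional morphism $\varphi\colon \Gamma\to Y$ making the square $f\circ\mu=\nu\circ\varphi$ commute. Setting $\pi=e\circ\mu\colon \Gamma\to X$ gives a birational morphism onto the projective manifold $X$, so one lands in the general setup considered at the beginning of Section~\ref{section:semistable}.

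Hypotheses (1) and (2) of Proposition~\ref{prop:irred-fiber-semistable} are then immediate from the construction: since $X$ is smooth and $\pi$ is birational, the $\pi$-exceptional locus is pure of codimension one; a general fibre of $\varphi$ is the strict transform of a general leaf of $\cF$ and is therefore not $\pi$-exceptional; and flattening arranges that every $\varphi$-exceptional divisor is mapped by $\mu$ to a subvariety of codimension at least two in $U$, hence is $\pi$-exceptional. For hypothesis (3), I would use the identity
\[
K_{\Gamma/Y}=\pi^{*}K_{\cF}+E_{1}+R(\varphi),
\]
where $E_{1}$ is the $\pi$-exceptional discrepancy divisor produced by the foliation pullback formula applied to $\pi^{-1}\cF=\varphi^{-1}0_{Y}$, and $R(\varphi)=\sum_{B,D}(\mathrm{mult}_{D}(\varphi^{*}B)-1)D$ is the ramification divisor of $\varphi$. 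Since $-\pi^{*}K_{\cF}$ is nef, taking $E''=E_{1}+R(\varphi)$ gives $-K_{\Gamma/Y}+E''=-\pi^{*}K_{\cF}$ nef, so it remains exactly to guarantee that $E_{1}+R(\varphi)$ is $\pi$-exceptional, which is precisely hypothesis (4).

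The delicate point is hypothesis (4), demanding that every non-reduced component of $\varphi^{*}B$, for $B\subseteq Y$ prime, is $\pi$-exceptional. Since a base change on $V$ would destroy the birationality of $\pi$, the classical Abramovich--Karu semistable reduction cannot be applied directly. I would instead arrange (4) by performing further toroidal blowups of $\Gamma$ and $Y$ that preserve the birationality of $\pi$, designed so that every non-reduced component not yet $\pi$-exceptional becomes absorbed into the exceptional locus of a subsequent modification. Checking that such a construction exists without spoiling hypotheses (1)--(3) is the main technical obstacle of the proof and is what the remainder of Section~\ref{section:construction} should be equipped to handle.

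Once the four hypotheses are in place, Proposition~\ref{prop:irred-fiber-semistable} yields, for every prime divisor $B'\subseteq Y$, that $\varphi^{-1}(B')$ has at most one irreducible component not contained in the $\pi$-exceptional locus. Applying this with $B'$ the strict transform under $\nu$ of a given prime divisor $B\subseteq V$, and observing that the birational morphism $\mu\colon \Gamma\to U$ sets up a bijection between the non-$\pi$-exceptional components of $\varphi^{-1}(B')$ and the non-$e$-exceptional components of $f^{-1}(B)$, we obtain that $f^{-1}(B)$ has at most one non-$e$-exceptional component of codimension one, as required.
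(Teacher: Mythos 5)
Your plan correctly identifies that Lemma~\ref{lemma:irreducible:fiber} should be deduced from Proposition~\ref{prop:irred-fiber-semistable}, and you correctly pinpoint hypothesis~(4) as the sticking point. But the fix you propose cannot work: multiplicities of fibre components are birational invariants in the relevant sense. If $f^{*}B$ has a non-reduced component $D$ with generic multiplicity $m>1$ that is \emph{not} $e$-exceptional, no sequence of blowups of $\Gamma$ and $Y$ that keeps $\nu$ and $\mu$ (hence $\pi$) birational will ever make the strict transform of $D$ either reduced or $\pi$-exceptional -- the strict transform of a non-exceptional divisor stays non-exceptional, and its multiplicity in the pulled-back fibre is unchanged along the birational modifications. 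Reducing multiplicities genuinely requires a ramified base change on $V$, which is exactly why Abramovich--Karu semistable reduction takes a cover. So ``further toroidal blowups designed to absorb non-reduced components into the exceptional locus'' is not an available move, and this is where your argument breaks.

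The idea you are missing is that Proposition~\ref{prop:irred-fiber-semistable} does not require $\pi$ to land in $X$ itself -- it only requires $\pi\colon\Gamma\to X$ to be a birational morphism onto \emph{some} normal variety, together with hypotheses~(1)--(4). The paper therefore allows $\iota\colon Y\to V$ to be a \emph{generically finite} surjective morphism (a genuine cover, produced by weak semistable reduction), lets $\Gamma$ be a desingularization of the main component of $U\times_{V}Y$, sets $\rho=e\circ(\Gamma\to U)\colon\Gamma\to X$ (now generically finite, not birational), and takes $\pi\colon\Gamma\to X'$ to be the \emph{Stein factorization} of $\rho$. Then $\pi$ is birational onto the normal variety $X'$, hypothesis~(4) holds because weak semistable reduction makes all non-reduced fibre components $\rho$-exceptional, and hypothesis~(3) follows by comparing $K_{\cF'}=\rho^{*}K_{\cF}+E_{1}$ (from Lemma~\ref{lemma:basechange-canonical}) with $K_{\cF'}=K_{\Gamma/Y}+E_{2}$ (from the semistability), both corrections being $\pi$-exceptional. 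Proposition~\ref{prop:irred-fiber-semistable} then controls components of $\varphi^{-1}(B')$ for a component $B'$ of $\iota^{-1}(B)$ dominating $B$, and since $\Gamma$ dominates the main component of $U\times_{V}Y$ while $X'\to X$ is finite, the count of non-$\pi$-exceptional divisorial components transfers back to a count of non-$e$-exceptional divisorial components of $f^{-1}(B)$. Without the Stein-factorization device your final bijection between components has no reason to hold, and without the cover hypothesis~(4) simply fails.
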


\begin{proof}
By \cite[Theorem 0.3]{AbramovichKaru2000}, weak semistable reductions for the fibration $f$ exist. 
In particular, there is a generically finite projective surjective morphism $\iota\colon Y \to V$ with $Y$ smooth,  and there is  a desingularization $\Gamma$ of the main component of $U\times_V Y$, 
\begin{equation*} 
\begin{tikzcd}[column sep=large, row sep=large]
  \Gamma \ar[d,"{\varphi}"]  \arrow[bend left]{rr}{\rho}   \ar[r,"{}"]  & U  \ar[d,"f"]\ar[r,"e"] & X\\
 Y \ar[r,"\iota"] & V &
\end{tikzcd}
\end{equation*} 
such that for every prime divisor divisor $B_Y \subseteq Y$, any non reduced component of $\varphi^*B_Y$ is $\rho$-exceptional.

We may assume that morphism $\iota$ is  Galois of group $G$ over the generic point of $V$, and there is a natural action of $G$ on $Y$.  
We  assume further that   $\Gamma$ is a $G$-equivariant desingularization, and  the morphisms $\varphi$ is $G$-equivariant. 
We also assume that the $\rho$-exceptional locus is pure of codimension one, and we denote it as a reduced divisor $E$. 
Particularly, $E$ is $G$-invariant.  
Since $f$ is smooth over the generic point of $V$, we may assume that  $\Gamma \to U\times_V Y$ is an isomorphism over the smooth locus. 
In particular, $E$ does not dominate $Y$.

Let $\cF'$ be the foliation on $\Gamma$ induced by $\varphi$. 
On the one hand, by Lemma \ref{lemma:basechange-canonical}, there is some $\pi$-exceptional divisor $E_1$ such that 
\[K_{\cF'} = \rho^* K_{\cF} + E_1.\] 
On the other hand, the semistable assumption implies that, there is an open subset $W\subseteq \Gamma$ on which $\varphi$ is smooth, such that every component of $\Gamma \setminus W$  is either a $\pi$-exceptional divisor, or of codimension at least two in $\Gamma$. 
Thus there is some $\pi$-exceptional divisor $E_2$ such that 
\[K_{\cF'} = K_{\Gamma/Y} + E_2.\]
In conclusion, there is a  $\pi$-exceptional divisor $E''$ such that 
\begin{equation*}
-K_{\Gamma/Y}+ E''=\rho^*(-K_\cF), 
\end{equation*}
which is  nef by assumption. 
Let $\pi\colon \Gamma \to X'$ be the Stein factorization of $\rho$.  
We note that $X'/G = X$, which is smooth. 
Therefore,  we can now apply the techniques in Section \ref{section:semistable} to the diagram 
\begin{equation*} 
\begin{tikzcd}[column sep=large, row sep=large]
  \Gamma \ar[d,"{\varphi}"]     \ar[r,"\pi"]  & X'  \\
 Y  & 
\end{tikzcd}
\end{equation*}

Let $B\subseteq V$ be a prime divisor and $B'$ be a component of $\iota^{-1}(B)$ which dominates $B$. 
By Proposition \ref{prop:irred-fiber-semistable},   there is at most one component of $\varphi^{-1}(B')$ which is not in the $\pi$-exceptional locus.  
Since $\Gamma$ is a desingualrization  of the main component of the fiber product $Y\times_V U$, this implies that  the preimage $f^{-1}(B)$ has at most one  divisorial component which is not contained in the  $e$-exceptional locus.
\end{proof}

Next we will prove the following construction.

\begin{lemma}
\label{lemma:good-semistable-reduction}
There is a generically finite, projective surjective morphism $\iota\colon Y \to V$ with $Y$ smooth, such that if $\Gamma$ is  a desingularization  of the main component of $U\times_V Y$,  
\begin{equation*} 
\begin{tikzcd}[column sep=large, row sep=large]
  \Gamma \ar[d,"{\varphi}"]  \arrow[bend left]{rr}{\rho}   \ar[r,"{}"]  & U  \ar[d,"f"]\ar[r,"e"] & X\\
 Y \ar[r,"\iota"] & V &
\end{tikzcd}
\end{equation*} 
then  for every prime divisor $B_Y\subseteq Y$, the preimage $\varphi^*B_Y$ has at least one reduced component which dominates $B_Y$,  and any non reduced component of $\varphi^*B_Y$ is $\rho$-exceptional. 
Moreover, for any prime divisor $B$ in $V$,  contained in the discriminant of $\iota$,
\begin{enumerate}
\item either $e(f^{-1}(B))$ has codimension at least two in $X$,
\item or $f$ is smooth over the generic point of $B$,
\item or $e(f^{-1}(B))$ has a unique  irreducible component $D$ of codimension one, and $\rho$ is \'etale  over the generic point of $D$.
\end{enumerate} 
Furthermore, $\iota$ is Galois of group $G$ over the generic point of $V$, and there is a natural action of $G$ on $Y$.  
\end{lemma}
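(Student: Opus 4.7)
The strategy is to build $\iota\colon Y\to V$ as a composition of a Kawamata-type abelian cover matching the fiber multiplicities of $f$ with a subsequent Abramovich-Karu semistable modification, and then read off the trichotomy from Lemma \ref{lemma:irreducible:fiber} together with an explicit local calculation.

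For each prime divisor $B\subseteq V$ in the discriminant of $f$ such that $e(f^{-1}(B))$ has codimension one in $X$ and $f$ is not smooth at the generic point of $B$, Lemma \ref{lemma:irreducible:fiber} produces a unique non-$e$-exceptional component $D'_B\subseteq f^{-1}(B)$; denote by $m_B>1$ the multiplicity of $f$ along $D'_B$. First I would take an abelian Galois cover $Y_1\to V$ ramified of order exactly $m_B$ along each such $B$, obtained by Kawamata's cyclic cover trick applied factor by factor and composed. Then I would apply \cite[Theorem 0.3]{AbramovichKaru2000} equivariantly to obtain a further generically finite modification $Y\to Y_1$ with $Y$ smooth, so that $\iota\colon Y\to V$ is Galois over the generic point of $V$ with group $G$ acting on $Y$, and a suitable desingularization $\Gamma$ of the main component of $U\times_V Y$ enjoys the weak semistability property: every non-reduced component of $\varphi^{*}B_Y$ is $\rho$-exceptional, and some reduced component dominates each prime divisor $B_Y\subseteq Y$.

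For the trichotomy, fix a prime divisor $B\subseteq V$ in the discriminant of $\iota$. If $e(f^{-1}(B))$ has codimension at least two in $X$, case (1) holds. Otherwise Lemma \ref{lemma:irreducible:fiber} supplies a unique component $D'\subseteq f^{-1}(B)$ not $e$-exceptional, with image $D\defeq e(D')\subseteq X$. If $f$ is smooth at the generic point of $D'$, case (2) holds. Otherwise $f$ has multiplicity $m=m_B>1$ along $D'$ and, by construction of $\iota$, the ramification of $\iota$ along the preimage of $B$ is exactly $m$. In local \'etale coordinates where $f$ reads $u=t^{m}$ and $\iota$ reads $u=s^{m}$, the fiber product $\mathrm{Spec}\,\mathbb{C}[t,s]/(t^{m}-s^{m})$ decomposes, in characteristic zero, into $m$ smooth branches $\{t=\zeta s\}$ each mapping isomorphically onto $U$; after normalization $\rho$ is therefore \'etale over the generic point of $D$, which is case (3).

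The main obstacle will be to ensure that the equivariant Abramovich-Karu modification $Y\to Y_1$ does not increase the ramification along preimages of any $B$ of the first type beyond $m_B$; otherwise the local calculation would show that $\rho$ picks up ramification of order $>1$ over $D$ and case (3) would fail. This is handled by the fact that Abramovich-Karu's toroidal blow-ups only modify $Y_1$ above the singular or non-flat locus of the fiber product, which is a proper closed subset not meeting the generic points of the divisors above a $B$ of the first type, since over such generic points $U\times_V Y_1$ is already a disjoint union of smooth sheets each \'etale over $U$. Hence the ramification of $\iota$ along these generic points coincides with that of $Y_1\to V$, which is precisely $m_B$.
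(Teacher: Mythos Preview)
Your overall strategy is in the right spirit, but the paper's proof is both simpler and avoids the very obstacle you flag in your last paragraph. The paper does \emph{not} invoke Abramovich--Karu here at all. Instead it first replaces $V$ by a desingularization $V'$ so that the discriminant of $f'\colon U'\to V'$ is simple normal crossing, and then applies Kawamata's covering trick once to produce a finite Galois cover $\sigma\colon Y\to V'$ with $Y$ smooth whose ramification along each component $B_i$ of the discriminant equals the multiplicity $m_i$ of a chosen component $D_i\subseteq f'^*B_i$ (the non-exceptional one when it exists, an arbitrary one otherwise). Taking $\Gamma$ to be any desingularization of $Y\times_{V'}U'$, the existence of a reduced dominating component of $\varphi^*B_Y$ follows directly from this multiplicity matching. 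The key point is that Lemma~\ref{lemma:irreducible:fiber} already forces every non-$\rho$-exceptional divisorial component of $\varphi^{-1}(B_Y)$ to lie over the unique non-$e$-exceptional component of $f^{-1}(\iota(B_Y))$, and the Kawamata ramification makes all of those reduced; hence every non-reduced component is automatically $\rho$-exceptional. No weak semistable reduction step is needed.

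Your last paragraph is where your approach has a genuine gap. The Abramovich--Karu base alteration is in general a composition of toroidal modifications \emph{and a further finite cover}, not just blow-ups; that finite part is a global construction and can introduce extra ramification over divisors of $Y_1$, including those lying over your distinguished $B$'s, regardless of whether the fiber product is already smooth there. So your assertion that the modification is an isomorphism near those generic points does not follow from what you wrote. The paper's route sidesteps this entirely: since only Kawamata's cover is used, the ramification of $\iota$ along the relevant divisors is exactly $m_B$ by construction, and your local computation for case~(3) then goes through cleanly. (A minor technical slip as well: you apply Kawamata's cover directly to $V$, but $V$ is only normal; one needs the preliminary resolution $V'\to V$ to arrange an SNC branch locus before the cyclic cover construction applies.)
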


We recall that the discriminant of $\iota$ is the locus in $V$ over which $\iota$ is not smooth.  
With this construction, we have the following property. 

\begin{cor}\label{cor:good-semistable-reduction} 
If $D$ is a prime divisor in $X$ contained in the discriminant of $\rho$, then  there is a prime divisor $B$ in $V$ such that  $f$ is smooth over the generic point of $B$, and that $D$ is the unique component of $e(f^{-1}(B))$ which has codimension one.
\end{cor}

\begin{proof}
Let $D_U$ be the strict transform of $D$ in $U$. 
Then the natural morphism $\Gamma \to U$ is branched at the generic point of $D_U$. 
Since $\Gamma$ is a desingularization of the fiber product $Y \times_V U$, we see that $D_U$ is vertical over $V$. 
Then $B=f(D_U)$ is a prime divisor in $V$ since $f$ is equidimensional.  
Thus  we are in the case of item (2) of   Lemma \ref{lemma:good-semistable-reduction}. 
This completes the proof of the corollary.  
\end{proof}

\begin{proof}[{ Proof of Lemma \ref{lemma:good-semistable-reduction} }]
We note that $f$ is smooth over the generic point of $V$. 
Let $V' \to V$ be a desingularization such that  there is a simple normal crossing divisor $\Delta$ in $V'$ such that  the natural morphism $f'\colon U' \to V'$ is smooth over $V' \setminus \mathrm{Supp}\,\Delta$. Here $U'$ is the normalization of the main component of $U\times_V V'$. 

Let $B_1,...,B_k$ be all the components of $\Delta$ such that $f'^*B_i$ has a component $D_i$  which is not exceptional over $X$.  
Lemma \ref{lemma:irreducible:fiber} shows that such a component $D_i$ is unique in $f'^*B_i$. 
Let $B_{k+1},...,B_{l}$ be the other components of $\Delta$. Let $D_{j}$ be an irreducible component of $f'^*B_j$   for $j=k+1,...,l$. 
We denote by $m_i$ the multiplicity of $D_i$ in $f'^*B_i$ for $i=1,...,l$.

By Kawamata's covering trick (see \cite[Theorem 17]{Kawamata1981} or \cite[Theorem 1-1-1]{KMM87}),  there is a finite Galois cover $\sigma\colon Y \to V'$ with $Y$ smooth such that 
$\sigma^*B_i$ is pure of multiplicity $m_i$ for $i=1,...,l$.
Let $\Gamma$ be a desingularization of   $Y\times_{V'} U'$. 
Then by construction,  the natural morphism $\Gamma \to U'$ is \'etale over the generic point of $D_i$ for all $i$ (see for example \cite[Lemma A.3]{DasOu2022}). 
Moreover,  for every prime divisor $B_Y\subseteq Y$, the preimage $\varphi^*B_Y$ has at least one reduced component which dominates $B_Y$. 
We also see that any non reduced component of $\varphi^*B_Y$ is $\rho$-exceptional.

Let $\iota\colon Y \to V$ be the morphism. 
To complete the proof, let  $B$ be a prime divisor in $V$, contained in the discriminant of $\iota$. Assume  that $e(f^{-1}(B))$ has an irreducible component $D$  of codimension one. Then $D$ is unique by Lemma  \ref{lemma:irreducible:fiber}. 
Assume furthermore that $f$ is not smooth over the generic point of $B$.  
Then we need to prove that $B$ satisfies item (3) of the Lemma.
From the second paragraph,  the strict transform of $B$ in $V'$ is one of the element in $\{B_1,...,B_k\}$, say $B_1$. 
The previous paragraph implies that $\Gamma \to U'$ is \'etale over the generic point of $D_1$. 
Since $D_1$ is the strict transform of $D$ in $U'$, we conclude that $\rho\colon \Gamma \to X$ is \'etale over the generic point of $D$.  
\end{proof}

\section{Proofs of the main theorems}\label{section:proof}

In this section, we will finish the proofs of the theorems in the introduction.  We first recall the demonstration of Theorem \ref{thm:rc-reduction}. 

\begin{proof}[{Proof of Theorem \ref{thm:rc-reduction}}]
Thanks to \cite[Lemma 6.2]{Druel2017}, the algebraic part $\cF_{alg}$ of $\cF$ has a compact leaf.  
This proves item (1).   
We can now apply \cite[Proposition 6.1]{Druel2017} to show that 
$K_{\cF} \equiv  K_{\cF_{alg}}$.

As in  \cite[Claim 4.3]{Druel2019}, we can then deduce that $K_{\cF} \equiv K_{\cF_{rc}}$.  
We note that the foliation $\cF_{rc}$ here  corresponds to the foliation $\cH$ in \cite[Claim 4.3]{Druel2019}, which was defined at the end of \cite[page 316]{Druel2019}.    
Furthermore, in  \cite[Claim 4.3]{Druel2019}, the foliation $\cF$ is assumed to have semipositive anticanoncial class.  
Nevertheless, its proof only uses the fact that $-K_\cF$ is nef.  
\end{proof}

We will give the proofs of  Corollary \ref{cor:structure}, Theorem \ref{thm:main-theorem} and Corollary \ref{cor:regularity}  by  assuming Theorem \ref{thm:decomposition}.

\begin{proof}[{Proof of Corollary \ref{cor:structure}}]
By Theorem \ref{thm:decomposition}, $\cF$ is a direct summand of $T_X$.  
Then \cite[Proposition 2.5]{HwangViehweg2010} implies that, there is a morphism  $\mu$ from $X$ to  the Chow scheme of $X$, such that $\mu$ sends a leaf $C$ of $\cF$ to the  cycle class $[|G_C|C]$, where $|G_C|$ is the cardinality of the holonomy group of $C$.   
Since $X$ is irreducible, the image of $\mu$ is irreducible as well. 
This shows that every leaf of $\cF$  has the same dimension.  

If we assume further that general leaves of $\cF$ are rationally connected, then $f$ is a smooth morphism by \cite[Corollary 2.11]{Hoering07}.  
Then this fibration  is  a locally trivial family over $Y$  (see Corollary \ref{cor:smooth-morphism}).
\end{proof}

\begin{example}\label{example:multiple-fibers}
If the leaves of $\cF$ are not rationally connected, then the fibration $f\colon X \to Y$ in Corollary \ref{cor:structure} might not be smooth. Let $G=\mathbb{Z}/2\mathbb{Z}$. 
Then there is a $G$-action on $\mathbb{P}^1$ which has exactly two fixed points. 
Let $E$ be an elliptic curve equipped with a free $G$-action. 
We set $X=(E\times\mathbb{P}^1)/{G}$, where the quotient is with respect to the diagonal action. 
Then $X$ is smooth. There is a natural fibration $f\colon X\to Y=\mathbb{P}^1/G$. 
Then $f$ is not smooth and the foliation in curves  induced by $f$ is nef. 
\end{example}

\begin{proof}[{ Proof of Theorem \ref{thm:main-theorem}}]
We note that $\cF_{rc}$ has a compact leaf. 
By Theorem \ref{thm:rc-reduction}, we get  $-K_{\cF_{rc}} \equiv -K_{\cF}$ is nef. 
Hence by Corollary \ref{cor:structure},   $\cF_{rc}$ is induced by a smooth fibration $f\colon X\to Y$.  
Moreover, this fibration  is a locally trivial family over $Y$. 
It follows that there is a foliation $\cG$ on $Y$ such that $\cF=f^{-1}\cG$ (see for example \cite[Lemma 6.7]{AraujoDruel2013}).
From $-K_{\cF_{rc}} \equiv -K_{\cF}$, we deduce that $K_\cG \equiv 0$. This completes the proof of the theorem.
\end{proof}

\begin{proof}[{Proof of Corollary \ref{cor:regularity}}]
By Theorem \ref{thm:main-theorem},  there is a locally trivial fibration $f\colon X\to Y$  
and  a foliation $\cG$ on $Y$ with $K_{\cG} \equiv 0$ such that $\cF = f^{-1}\cG$. 
Since $\cF$ has a compact leaf, so has  $\cG$. 
Hence  by \cite[Theorem 5.6]{LorayPereiraTouzet2018}, $\cG$ is regular, and $T_Y=\cG \oplus \cH$ for some regular foliation $\cH$ on $Y$.  
In particular, $\cF$ is regular as well. 

Let $\cF_f$ be the foliation induced by $f$. We observe that $-K_{\cF_f} \equiv -K_\cF$  is  nef. 
By Theorem \ref{thm:decomposition}, there is a regular foliation $\cK$ on $X$ such that $T_X=\cK \oplus \cF_f$. 
Then the natural map $\cK\to f^*T_Y$ is an isomorphism. 
We set $\cE=\cK\cap f^{-1}\cH$. 
Then $\cE$ is a regular foliation and we verify that $T_X=\cF \oplus \cE$.   
\end{proof}

The remainder of this section is then devoted to the proof of Theorem \ref{thm:decomposition}.  We will divide it into several steps.

\subsection{Setup}\label{section:setting}
Until the end of the section, we will consider  the following situation.  Let $\cF$ be an algebraically integrable foliation with nef anticanoncial class $-K_{\cF}$, on  a projective manifold $X$.  
Assume that $\cF$ has a compact leaf.   
Let $f\colon U\to V$ be the family of leaves  with $U$ and $V$ normal.  
We take  a generically finite  morphism  $\iota \colon Y\to V$, and a  desingularization $\Gamma$ of the main component of $U\times_V Y$,  as in Lemma \ref{lemma:good-semistable-reduction}.  
\begin{equation*} 
\begin{tikzcd}[column sep=large, row sep=large]
  \Gamma \ar[d,"{\varphi}"]  \arrow[bend left]{rr}{\rho}   \ar[r,"{}"]  & U  \ar[d,"f"]\ar[r,"e"] & X\\
 Y \ar[r,"\iota"] & V &
\end{tikzcd}
\end{equation*}
Then for every prime divisor $B_Y\subseteq Y$, the preimage $\varphi^*B_Y$ has at least one reduced component which dominates $B_Y$, and any non reduced component of $\varphi^*B_Y$ is $\rho$-exceptional. 
Moreover, for any prime divisor $B$ in $V$,  contained in the discriminant of $\iota$,
\begin{enumerate} 
\item either $e(f^{-1}(B))$ has codimension at least two in $X$,
\item or $f$ is smooth over the generic point of $B$,
\item or $e(f^{-1}(B))$ has a unique  irreducible component $D$ of codimension one, and $\rho$ is \'etale  over the generic point of $D$.
\end{enumerate}

We note that, by construction, every $\varphi$-exceptional divisor is  $\rho$-exceptional  as well.  
The morphism $\iota$ is  Galois of group $G$ over the generic point of $V$, and there is a natural action of $G$ on $Y$.  
We  assume further that   $\Gamma$ is a $G$-equivariant desingularization, and  the morphisms $\varphi$ is $G$-equivariant. 
We also assume that the $\rho$-exceptional locus is pure of codimension one, and we denote it as a reduced divisor $E$.
Since $f$ is smooth over the generic point of $V$, we may assume that  $\Gamma \to U\times_V Y$ is an isomorphism over the smooth locus. 
In particular, $E$ does not dominate $Y$.
Let $\pi\colon \Gamma \to X'$ the  Stein factorization of $\rho$.  
We note that $X'/G = X$, and we have the following diagram.

\begin{equation*} 
\begin{tikzcd}[column sep=large, row sep=large]
  \Gamma \ar[d,"{\varphi}"]  \arrow[bend left]{rr}{\rho}   \ar[r,"\pi"]  & X' \ar[r,"q"] & X\\
 Y  & &
\end{tikzcd}
\end{equation*}

 We denote by $\cF'$ the foliation on $\Gamma$ induced by $\varphi$. On the one hand, by Lemma \ref{lemma:basechange-canonical}, there is some $\pi$-exceptional divisor $E_1$ such that 
\[K_{\cF'} = \rho^* K_{\cF} + E_1.\] 
On the other hand, by the semistable assumption, there is some $\pi$-exceptional divisor $E_2$ such that 
\[K_{\cF'} = K_{\Gamma/Y} + E_2.\]
In conclusion, there is a $G$-invariant $\pi$-exceptional divisor $E''$ such that 
\begin{equation*}
-K_{\Gamma/Y}+ E''=\rho^*(-K_\cF).
\end{equation*}
Since $-K_{\cF}$ is assumed to be nef, we can   apply the techniques in Section \ref{section:semistable} to the diagram 
\begin{equation*} 
\begin{tikzcd}[column sep=large, row sep=large]
  \Gamma \ar[d,"{\varphi}"]     \ar[r,"\pi"]  & X'  \\
 Y  & 
\end{tikzcd}
\end{equation*}

Let  $V_0 \subseteq V$ be a  Zariski open  subset such that the following properties hold,
\begin{enumerate} 
\item[$\bullet$] $\iota$ is finite over $V_0$;
\item[$\bullet$] for any prime divisor $B\subseteq V $, the preimage $f^{-1}(B)$ is   contained in the $e$-exceptional locus  if and only if $B\subseteq V \setminus V_0$.  
\end{enumerate}
Let $Y_0=\iota^{-1}(V_0)$.  We pick a sufficiently ample   divisor $A$ on $\Gamma$.   
We will assume that $A$ is $G$-invariant. 
More precisely, we assume that  \[A=\rho^*N + E_A\] where $N$ is a sufficiently ample divisor on $X$, and $E_A$ is $\rho$-exceptional and $G$-invariant.
We define the following $\mathbb{Q}$-divisor, with $m_0$ and $r_{m_0}$ as in Lemma \ref{lemma:A-tilde},  
\[\widetilde{A} = A+m_0E - \frac{1}{r_{m_0}}\varphi^*c_1(\varphi_*\cO_\Gamma (A+m_0E)).\]
We remark that, $\widetilde{A}$ is $G$-invariant as well.  
By Lemma \ref{lemma:normalized-c_1}, there is a an effective divisor such that $\widetilde{A}+F$ is pseudoeffective. 
Replacing $F$ by the sum of its $G$-orbit, we may assume that $F$ is 
$G$-invariant. 

As in Corollary \ref{cor:Vp}, we choose a sequence  $\{c_p\}_{p\geqslant 0}$ of nonnegative integers with $c_0=0$ such that  the  direct image sheaves 
\[\mathcal{V}_{p} = \varphi_*\cO_\Gamma (c_{p}|E''|+pr_{m_0}(\widetilde{A}+F))\]
are weakly positively curved. 
The first Chern  class   $c_1((\varphi^*\cV_p)^{**})$ is supported in the $\rho$-exceptional locus for every $p$. 
Furthermore  $c_i+c_j \leqslant c_{i+j}$ for any $i,j \geqslant 0$.
By construction, every $\cV_p$ is $G$-linearized.

\subsection{Flatness about $\cV_p$} \label{section:flatness}
 
If  $X'$ is smooth, then the reflexive hull $(\pi_*(\varphi^*\cV_p))^{**}$ will be a numerically flat vector bundle, as it is weakly positively curved with zero first Chern class (see  \cite[Proposition 2.6]{CCM19}).
In general  $X'$  is not smooth, and the smoothness condition lies on $X$ instead. 
Hence, we aim to descend $(\varphi^*\cV_p)^{**}$ to $X$.
 
\begin{equation*} 
\begin{tikzcd}[column sep=large, row sep=large]
  \Gamma \ar[d,"{\varphi}"]  \arrow[bend left]{rr}{\rho}   \ar[r,"\pi"]  & X' \ar[r,"q"] & X\\
 Y  & &
\end{tikzcd}
\end{equation*}

\begin{lemma}
\label{lemma:descend-V_p}
 Let $X^\circ \subseteq X$ be the largest Zariski open subset such that $\rho$ is a finite morphism over $X^\circ$. 
 Let $\Gamma^\circ = \rho^{-1}(X^\circ)$.
Then there is a reflexive sheaf $\cE_p$ on $X$ such that $(\rho^*\cE_p)^{**}|_{\Gamma^\circ} \cong (\varphi^*\cV_p)^{**}|_{\Gamma^\circ}$. 
\end{lemma}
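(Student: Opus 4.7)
My plan hinges on the observation that $\rho(E)$ has codimension at least two in $X$: since $\rho$ is generically finite and $E$ is a divisor in $\Gamma$, every component of $E$ maps to a strictly lower-dimensional subvariety, of dimension at most $\dim X - 2$. Hence $X \setminus X^\circ \subseteq \rho(E)$ has codimension at least two in the smooth variety $X$, and any reflexive sheaf on $X^\circ$ extends uniquely to a reflexive sheaf on $X$. It thus suffices to construct $\cE_p$ as a reflexive sheaf on $X^\circ$.

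First I would descend from $\Gamma^\circ$ to $X'^\circ := q^{-1}(X^\circ)$ using the Stein factorization. Over $X^\circ$ the morphism $\rho|_{\Gamma^\circ}$ is finite, so its Stein factorization $\pi|_{\Gamma^\circ} \colon \Gamma^\circ \to X'^\circ$ is finite and birational; as $\Gamma^\circ$ is smooth and $X'^\circ$ is normal, it is in fact an isomorphism. Transporting $(\varphi^*\cV_p)^{**}|_{\Gamma^\circ}$ across this isomorphism yields a reflexive sheaf $\cE'_p$ on $X'^\circ$, which inherits a $G$-linearization from the $G$-equivariance of $\varphi$ and the $G$-linearization of $\cV_p$ on $Y$.

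Next I would descend from $X'^\circ$ to $X^\circ$ using the $G$-action. Since $q \colon X' \to X$ is finite, $G$-invariant, and of generic degree $|G|$ (because $e$ is birational and $\iota$ has degree $|G|$), normality of $X$ forces the induced morphism $X'/G \to X$ to be an isomorphism, so $q|_{X'^\circ}$ is the $G$-quotient map. I then set $\cE^\circ_p := \bigl(((q|_{X'^\circ})_* \cE'_p)^G\bigr)^{**}$, a reflexive sheaf on $X^\circ$, and extend it uniquely to the sought $\cE_p$ on $X$.

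The main technical obstacle will be verifying the compatibility $(\rho^*\cE_p)^{**}|_{\Gamma^\circ} \cong (\varphi^*\cV_p)^{**}|_{\Gamma^\circ}$, which via $\pi|_{\Gamma^\circ}$ reduces to showing $(q^*\cE^\circ_p)^{**} \cong \cE'_p$ on $X'^\circ$. The natural adjunction morphism $q^*\cE^\circ_p \to \cE'_p$ is an isomorphism on the étale locus of $q|_{X'^\circ}$; to conclude the isomorphism of reflexive sheaves on all of $X'^\circ$ I would analyze the codimension-one ramification via Lemma \ref{lemma:good-semistable-reduction}. Such ramification divisors come from prime divisors $B \subset V$ in the discriminant of $\iota$; cases (1) and (3) of the lemma either push the image to codimension at least two in $X^\circ$ or ensure that $\rho$ (hence $q$) is already étale over the unique codimension-one component, while case (2)—where $f$ is smooth over the generic point of $B$—is the delicate one, which I expect to handle by combining that smoothness with the $G$-linearization of $\cV_p$ to force stabilizers to act trivially on stalks of $\cE'_p$ along the relevant ramification locus, so that the adjunction morphism remains an isomorphism in codimension one and the reflexive-hull identification follows.
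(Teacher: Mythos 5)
Your framework matches the paper's: both reduce to descending a $G$-linearized reflexive sheaf along the quotient $\Gamma^\circ \to X^\circ$ (your detour through $X'^\circ$ is a mild reorganization, since $\pi|_{\Gamma^\circ}$ is an isomorphism), extend across the codimension-$\geqslant 2$ locus $X\setminus X^\circ$ by reflexivity, and the whole thing hinges on checking that stabilizers act trivially on the relevant fibers in codimension one. The paper phrases this as Kempf's descent criterion \cite[Th\'eor\`eme 2.3]{DrezetNarasimhan1989} applied directly to $(\varphi^*\cV_p)^{**}$ on $\Gamma^\circ$, rather than taking $G$-invariants of a pushforward and then verifying the adjunction is an isomorphism, but the mathematical content is the same.

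The gap is precisely in the step you flag as ``the delicate one'' and then defer with ``I expect to handle.'' That is the entire content of the lemma, and the $G$-linearization by itself does \emph{not} force stabilizers to act trivially — that is what needs proof, and it fails for a generic $G$-linearized sheaf. The paper's argument is the following concrete computation, which you do not supply: for a prime divisor $\Delta\subseteq X$ in the discriminant of $\rho$, set $\Lambda = f(e_*^{-1}\Delta)$; by Remark~\ref{remark:good-semistable-reduction} $f$ is smooth over the generic point of $\Lambda$, so there is a Zariski neighborhood $S$ of that generic point over which $\Gamma$ actually \emph{equals} the fiber product $U\times_V Y$ (no blow-up needed). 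Writing $T=\iota^{-1}(S)$ and using $A=\rho^*N+E_A$ together with the flatness of $T\to S$ and the fact that the $\rho$-exceptional locus misses $\varphi^{-1}(T)$, one gets a $G$-equivariant isomorphism
\[
\cV_p|_T \;\cong\; \iota^*\bigl(f_*\cO_U(pr_{m_0}\widetilde N)\bigr)\big|_T ,
\qquad
\widetilde N = N - \tfrac{1}{r_{m_0}} f^*c_1\bigl(f_*\cO_U(e^*N)\bigr).
\]
Because the right-hand side is a pullback along $\iota$ from $V$, the $G$-action on its fibers along $\varphi^{-1}(T)$ factors through the action on $Y$, and hence every stabilizer $G_\gamma$ acts trivially on $(\varphi^*\cV_p)^{**}_\gamma$. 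Without this identification there is no reason your adjunction morphism $q^*\cE_p^\circ \to \cE'_p$ should be an isomorphism in codimension one, so the proof is incomplete as written.
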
 
 
\begin{proof}
We note that $\Gamma^\circ/G \cong X^\circ$ and that $(\varphi^*\cV_p)^{**}$ is $G$-linearized.    
To prove the existence of $\cE_p$, we need to verify that $(\varphi^*\cV_p)^{**}$ satisfies Kempf's descending condition (see \cite[Th\'eor\`eme 2.3]{DrezetNarasimhan1989}).  
More precisely, let $\Delta$ be a prime divisor in $X$, contained in the discriminant of  $\rho$.  
It is enough to show that, for a general point $x\in \Delta$,  a point $\gamma \in \Gamma$ lying over $x$, and a stabilizer $\theta \in G_\gamma$, the action of $\theta$ on  the fiber $(\varphi^*\cV_p)^{**}_\gamma$ is trivial.

We note that,  the divisor $\Lambda=f(e_*^{-1}\Delta)$ is contained in the discriminant of $\iota\colon Y \to V$. By construction (see Lemma \ref{lemma:good-semistable-reduction} and Corollary \ref{cor:good-semistable-reduction}), the fibration $f$ is smooth over the generic point of $\Lambda$. Particularly, there is an open neighborhood $S$ of the generic point of $\Lambda$ such that $\Gamma$ is equal to the fiber product $U\times_V Y$ over $S$. Let $T = \iota^{-1}(S)$. 
We may assume further that $\cV_p$ is locally free on $T$.

\begin{equation*} 
\begin{tikzcd}[column sep=large, row sep=large]
  \Gamma \ar[d,"{\varphi}"]  \arrow[bend left]{rr}{\rho}   \ar[r,"{}"]  & U  \ar[d,"f"]\ar[r,"e"] & X\\
 Y \ar[r,"\iota"] & V &
\end{tikzcd}
\end{equation*}

We recall that,  
$$\cV_p=\varphi_*\cO_{\Gamma}(c_p|E''|+pr_{m_0}(\widetilde{A}+F)),$$  
$$\widetilde{A}=A+m_0E-\frac{1}{r_{m_0}}\varphi^*c_1(\varphi_*\cO_\Gamma (A+m_0E)),$$ 
and $A=\rho^*N + E_A$. 
Let $$\widetilde{N}= N  -\frac{1}{r_{m_0}} f^* c_1(f_*\cO_U (e^*N)) $$
be a $\mathbb{Q}$-divisor in $U$.
Since $T\to S$ is flat, and since the $\rho$-exceptional locus  does not meet $\varphi^{-1}(T)$,  we obtain that 
\[\cV_p|_T \cong \iota^*(f_*\cO_U (pr_{m_0}\widetilde{N}))|_T.\] 
Moreover, such an isomorphism is $G$-equivariant. 
Since $x$ is  general in $\Delta$, the point $\gamma$ lies in $\varphi^{-1}(T)$. 
Hence the action of $\theta$ on $(\varphi^*\cV_p)^{**}_\gamma$ is trivial. This completes the proof  of the lemma.
\end{proof}

\begin{lemma}\label{lemma:E_p-Num-flat}
With the notation above, each $\cE_p$ is a numerically flat vector bundle. In particular, we may  assume that $\cE_p$ is equipped with the flat connection satisfying the conditions of Theorem \ref{thm:numflat=flat}.
\end{lemma}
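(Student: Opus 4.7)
The plan is to apply Lemma~\ref{lemma:num-flat-criterion-2} to $\cE_p$, taking $X_0 \subseteq X$ to be its locally free locus. Since $\cE_p$ is reflexive on the smooth variety $X$, the complement $X\setminus X_0$ has codimension at least two. Invoking the lemma reduces the problem to verifying that (i) $c_1(\cE_p)$ is numerically trivial and (ii) the diminished base locus of the tautological line bundle on $\mathbb{P}(\cE_p|_{X_0})$ is not surjective onto $X_0$. Once these are established, numerical flatness of $\cE_p$ follows (and in particular $\cE_p$ is a vector bundle), and Theorem~\ref{thm:numflat=flat}, together with Remark~\ref{remark:unique-connection}, supplies the canonical flat connection referred to in the statement.

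For (i), I would combine the isomorphism $(\rho^*\cE_p)^{**}|_{\Gamma^\circ} \cong (\varphi^*\cV_p)^{**}|_{\Gamma^\circ}$ from Lemma~\ref{lemma:descend-V_p} with the assertion of Corollary~\ref{cor:Vp} that $c_1((\varphi^*\cV_p)^{**})$ is supported in the $\pi$-exceptional locus, meaning it vanishes numerically against products $\pi^*D_1\cdots \pi^*D_{n-1}$ of pullbacks of ample classes from $X'$. Since the $\rho$-exceptional locus $E$ is pure of codimension one in $\Gamma$ while $\rho(E)$ has codimension at least two in $X$, a very general complete intersection curve $C$ in a large multiple of $|H|$ on $X$ avoids $\rho(E)$, so its preimage $C' = \rho^{-1}(C)$ lies entirely in $\Gamma^\circ$. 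Writing $\rho^*H = \pi^*(q^*H)$ and using that $q^*H$ is ample on $X'$, the projection formula identifies $(\deg\rho)\cdot c_1(\cE_p) \cdot H^{n-1}$ with $c_1((\varphi^*\cV_p)^{**}) \cdot (\rho^*H)^{n-1}$, which is zero. This gives $c_1(\cE_p)\cdot H^{n-1} = 0$ for every ample $H$, hence $c_1(\cE_p) \equiv 0$.

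For (ii), the strategy is to propagate the weak positive curvature of $\cV_p$ on $Y$ down to $\cE_p$ on $X$. Fix an ample divisor $A_X$ on $X$ and choose an ample $G$-invariant divisor $A_Y$ on $Y$ so that $\rho^*A_X \leqslant \varphi^*A_Y$ up to an effective $\rho$-exceptional divisor. By Lemma~\ref{lemma:weakly-positive} applied to $\cV_p$, for every $a>0$ there exists $m>0$ such that $({\Sym}^{am}\cV_p)^{**}\otimes\cO_Y(mA_Y)$ is generated by global sections at a general point of $Y$. Pulling these sections back via $\varphi$, averaging over the $G$-action (using that both $\cV_p$ and $A_Y$ are $G$-linearized), and then descending via Kempf's criterion---applied as in Lemma~\ref{lemma:descend-V_p} to the $G$-linearized sheaf $({\Sym}^{am}\varphi^*\cV_p)^{**}\otimes\rho^*\cO_X(mA_X)$---produces sections of $({\Sym}^{am}\cE_p)^{**}\otimes\cO_X(mA_X)$ separating general points of $X$. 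By Remark~\ref{remark:base-locus} this is exactly the desired statement about the diminished base locus.

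The hardest step will be the descent in~(ii): one has to choose $A_Y$ and $A_X$ compatibly, ensure that $G$-averaging preserves non-vanishing at a general point, and verify that the stabilizer-triviality required by Kempf's descent extends from $(\varphi^*\cV_p)^{**}$ to its symmetric powers and to the chosen twist---this last point is inherited from the same ``$f$ is smooth over the generic point of the branch divisor'' input used in the proof of Lemma~\ref{lemma:descend-V_p}. Once (i) and (ii) are in place, Lemma~\ref{lemma:num-flat-criterion-2} yields numerical flatness of $\cE_p$, and Theorem~\ref{thm:numflat=flat} equips it with the flat connection asserted in the statement.
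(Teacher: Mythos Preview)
Your verification of $c_1(\cE_p)\equiv 0$ in (i) is fine and matches the paper's one-line remark.

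The gap is in (ii). You propose choosing an ample $A_Y$ on $Y$ with $\rho^*A_X \leqslant \varphi^*A_Y$ up to an effective $\rho$-exceptional divisor, but this is impossible: $\rho\colon\Gamma\to X$ is generically finite, so $\rho^*A_X$ is big on $\Gamma$, whereas $\varphi^*A_Y$ is pulled back from the lower-dimensional $Y$ and adding a $\rho$-exceptional divisor cannot make it big. Thus there is no way to turn sections twisted by $\varphi^*A_Y$ into sections twisted by $\rho^*A_X$ via an effective comparison on $\Gamma$. The subsequent $G$-averaging/Kempf step is also shaky, since $\Gamma/G$ is only birational to $X$, not isomorphic to it.

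The paper sidesteps this by \emph{not} working on $\Gamma$ at all for (ii). Instead it passes to the Stein factorization $q\colon X'\to X$: on a large open set $X'_0\subseteq X'$ (complement of codimension $\geqslant 2$) the rational map to $Y$ becomes an honest morphism $\psi\colon X'_0\to Y_0$, and there one has the identification $q^*\cE_p|_{X'_0}\cong \psi^*\cV_p|_{X'_0}$. Weak positivity of $\cV_p$ then pulls back along $\psi$ directly to $q^*\cE_p|_{X'_0}$, so the diminished base locus of the tautological bundle on $\mathbb{P}(q^*\cE_p|_{X'_0})$ does not surject onto $X'_0$; since $q$ is \emph{finite}, this descends to $\mathbb{P}(\cE_p|_{X_0})$ over $X_0=q(X'_0)$, and Lemma~\ref{lemma:num-flat-criterion-2} applies. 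The point is that on $X'_0$ one has a genuine morphism to $Y$, so no comparison of ample twists on $\Gamma$ is needed; and finiteness of $q$ replaces the descent you attempted via $G$-averaging.
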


\begin{proof}
We fix some $p\geqslant 0$. For simplicity, we write $\cE=\cE_p$ and $\cV=\cV_p$. 
We first remark that $c_1(\cE)=0$ since $c_1((\varphi^*\cV)^{**})$ is supported in the $\rho$-exceptional locus.

By shrinking $Y_0$ if necessary, we may assume that $\cV$ is locally free on $Y_0$. 
We recall that $\Gamma^\circ\subseteq \Gamma$ is the largest open subset on which $\rho$ is finite.
By assumption, $Y_0\setminus \varphi(\Gamma^\circ)$ has codimension at least $2$, and $\varphi$ is equidimensional over $Y_0$.
Set $\Gamma^\circ_0 = \Gamma^\circ \cap \varphi^{-1}(Y_0)$. 
Then the complement of $\pi(\Gamma^\circ_0)$ has codimension at least two in $X'$.
Let $X_0'$ be the largest $G$-invariant open subset of $\pi(\Gamma^\circ_0)$, on which $q^*\cE$ is locally free. 
Then its complement has codimension at least two in $X'$ as well. 
Moreover, the rational map $\psi \colon X' \dashrightarrow Y$ induces a morphism $\psi\colon X'_0 \to Y_0$, such that we have an isomorphism  $(q^*\cE)|_{X_0'} \cong (\psi^*\cV)|_{X_0'}$. Let $X_0 = q(X'_0).$  We have the following commutative diagram.
\begin{equation*} 
\begin{tikzcd}[column sep=large, row sep=large]
    \mathbb{P}(\cV|_{Y_0}) \ar[d] &  \mathbb{P}(q^*\cE|_{X'_0}) \ar[r ] \ar[l]  \ar[d  ] \ar[r,"\mathrm{finite}"] &  \mathbb{P}(\cE|_{X_0})  \ar[d] \\
  Y_0      & X'_0\ar[l,"\psi"] \ar[r,"q"'] & X_0 
\end{tikzcd}
\end{equation*}  
Since $\cV$ is weakly positively curved, by Lemma \ref{lemma:weakly-positive}, for any ample divisor $A_Y$ in $Y$, any positive integer $a$, there is some integer $m>0$ such that  the natural evaluation map
\[H^0(Y,{(\Sym}^{am} \cV)^{**} \otimes \cO_X(mA_Y)) \to  ({\Sym}^{am} \cV)^{**}_y \otimes \cO_X(mA_Y)_y \]
 is surjective  for general $y\in Y$.
 
Therefore, for every ample divisor $A_X$ in $X$, any positive integer $a$, there is some integer $m>0$ such that   
\[H^0(X'_0, {\Sym}^{am} (q^*\cE) \otimes \cO_X(mq^*A_X)) \to  {\Sym}^{am} (q^*\cE)_x \otimes \cO_X(mq^*A_X)_x \]
 is surjective  for general $x\in X'_0$.
As a consequence, the diminished base locus of $\cO_{\mathbb{P}(q^*\cE|_{X'_0})}(1)$ is not surjective onto $X'_0$.  
Hence the diminished base locus of $\cO_{\mathbb{P}(\cE|_{X_0})}(1)$ is not surjective onto $X_0$ neither, for $q$ is a finite morphism.  
Thus $\cE$ is numerically flat by Lemma \ref{lemma:num-flat-criterion-2}.  
\end{proof}

\begin{lemma}\label{lemma:Vp-Wp}
There is a  flat vector bundle $\cW_p$ on $Y$ such that $\rho^*\cE_p = \varphi^*\cW_p$. Moreover, there is a natural isomorphism between $\cW_p|_{Y_0}$  and $\cV_p^{**}|_{Y_0}$.
\end{lemma}

\begin{proof}
By assumption  $\rho^*\cE_p$ is equipped with the   flat connection $\nabla_{\rho^*\cE_p}$ of Theorem \ref{thm:numflat=flat}.
Since $\Gamma^\circ$ contains general fibers of $\varphi$,  by the definition of $\cE_p$ (see Lemma \ref{lemma:descend-V_p}),  the restriction of $\rho^*\cE_p$ to a general fiber of $\varphi$  is isomorphic  to a  trivial bundle  as holomorphic vector bundles.
From the uniqueness of Theorem \ref{thm:numflat=flat}, we obtain that $\nabla_{\rho^*\cE_p}$ induces the trivial connection on the restriction of $\rho^*\cE_p$ to a general fiber of $\varphi$.
Therefore, the representation of the fundamental  group  $\pi_1(\Gamma)$ corresponding to  $(\rho^*\cE_p,\nabla_{\rho^*\cE_p})$ induces trivial representation on the fundamental group of a general fiber of $\varphi$.
Then Proposition  \ref{prop:flat-descend} implies that there is a  flat vector bundle $\cW_p$ on $Y$ such that $\rho^*\cE_p = \varphi^*\cW_p$.

Finally, we remark that   $Y_0 \setminus \varphi(\Gamma^\circ)$   has codimension at least $2$ in $Y_0$. Hence  Lemma \ref{lemma:iso-descend} implies that there is a natural isomorphism between $\cW_p|_{Y_0}$  and $\cV_p^{**}|_{Y_0}$.
\end{proof}

\subsection{Decomposition of the tangent bundle}
Let $\nabla_{\cW_p}$ be the  flat connection on $\cW_p$, induced by the flat connection on $\cE_p$.  
Then it satisfies the condition of Theorem \ref{thm:numflat=flat}, and is $G$-equivariant.

\begin{equation*} 
\begin{tikzcd}[column sep=large, row sep=large]
  \Gamma \ar[d,"{\varphi}"]  \arrow[bend left]{rr}{\rho}   \ar[r,"\pi"]  & X' \ar[r,"q"] & X\\
 Y  & &
\end{tikzcd}
\end{equation*}

\begin{lemma}\label{lemma:morphism-Wp}
There is a surjective morphism of commutative graded $\cO_Y$-algebras 
\[ \bigoplus_{p\geqslant 0} {\Sym}^p\cW_1 \to \bigoplus_{p\geqslant 0} \cW_p.\]
Moreover, each graded piece is a morphism of flat bundles.  
\end{lemma}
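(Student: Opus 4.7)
The plan is to construct each graded piece $\Sym^p \cW_1 \to \cW_p$ by first building an analogous morphism $\tau_p \colon \Sym^p \cE_1 \to \cE_p$ on the projective manifold $X$, where Corollary~\ref{cor:morphism-flat} will supply flatness for free, and then transferring it to $Y$ via $\pi_1$-equivariant descent through $\varphi$.

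First, the inequalities $c_i+c_j \leq c_{i+j}$ give $L_i+L_j \leq L_{i+j}$, so multiplication of sections endows $\bigoplus_p \cV_p$ with the structure of a commutative graded $\cO_Y$-algebra; the universal property of the symmetric algebra then produces morphisms $\sigma_p \colon \Sym^p \cV_1 \to \cV_p$. I would pull $\sigma_p$ back to $\Gamma$ and restrict to the open subset $\Omega = \Gamma^\circ \cap \{\varphi^*\cV_1,\,\varphi^*\cV_p \text{ locally free}\}$; the identification $(\varphi^*\cV_q)^{**}|_{\Gamma^\circ} \cong \rho^*\cE_q$ from Lemma~\ref{lemma:descend-V_p} converts this into a morphism $\Sym^p(\rho^*\cE_1) \to \rho^*\cE_p$ of locally free sheaves on $\Omega$. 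The complement $\Gamma \setminus \Omega$ has codimension $\geq 2$ image in $X$: the piece $\Gamma \setminus \Gamma^\circ$ maps into $X \setminus X^\circ$, which is codim $\geq 2$ by construction, while each non-locally-free locus of $\varphi^*\cV_q$ has codim $\geq 2$ in $\Gamma$ (since $\cV_q$ is torsion-free on the smooth variety $Y$) and hence codim $\geq 2$ image under the generically finite $\rho$.

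These morphisms are $G$-equivariant by naturality of multiplication and the $G$-linearization of the $\cV_q$, so they descend along $\rho$ via Kempf's criterion to morphisms $\Sym^p \cE_1 \to \cE_p$ on an open subset of $X$ with codim $\geq 2$ complement. Since each $\cE_p$ is locally free on $X$ by Lemma~\ref{lemma:E_p-Num-flat}, Hartogs will extend these uniquely to global morphisms $\tau_p \colon \Sym^p \cE_1 \to \cE_p$ on $X$. Both source and target are numerically flat vector bundles on the projective manifold $X$ carrying the canonical flat connections of Theorem~\ref{thm:numflat=flat}, so applying Corollary~\ref{cor:morphism-flat} to $\tau_p$ viewed as a global section of the numerically flat bundle $(\Sym^p \cE_1)^* \otimes \cE_p$ will show that $\tau_p$ is a morphism of flat vector bundles.

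Pulling $\tau_p$ back to $\Gamma$ produces a morphism of flat bundles $\Sym^p \varphi^*\cW_1 \to \varphi^*\cW_p$. Connected fibers of $\varphi$ make $\pi_1(\Gamma) \to \pi_1(Y)$ surjective, so the induced $\pi_1(\Gamma)$-equivariant map at a fiber is automatically $\pi_1(Y)$-equivariant and descends to a morphism of flat bundles $\Sym^p \cW_1 \to \cW_p$ on $Y$. Assembling the graded pieces, with associativity and commutativity inherited from $\bigoplus_p \cV_p$ through the chain of descents, will give the desired morphism of commutative graded $\cO_Y$-algebras. For surjectivity I would pull back to the image $C \subset Y$ of a very general complete intersection curve $C' \subset \Gamma$: as in the proof of Proposition~\ref{prop:irred-fiber-semistable}, the morphism restricts to a surjective morphism between numerically flat bundles on $C$, hence is surjective at some fiber in $Y$, and a morphism of flat bundles surjective at one fiber is surjective everywhere by parallel transport. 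The main technical difficulty is that the multiplication on $\bigoplus \cV_p$ does not extend naturally across the divisorial $\rho$-exceptional locus in $\Gamma$; descent to $X$ converts this codim-$1$ obstruction into a codim $\geq 2$ one that Hartogs handles, after which Corollary~\ref{cor:morphism-flat} supplies flatness on the projective manifold $X$ without further effort.
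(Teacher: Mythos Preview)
Your approach is essentially the same as the paper's: build the natural morphisms $\Sym^p\cV_1 \to \cV_p$, transfer them to $\Sym^p\cE_1 \to \cE_p$ on the projective manifold $X$, apply Corollary~\ref{cor:morphism-flat} there, and then descend back to $Y$ via the identification $\rho^*\cE_p = \varphi^*\cW_p$. One simplification you overlook: since $\tau_p$ is generically surjective (because $\widetilde{A}$ is very ample on general fibers of $\varphi$), the first part of Corollary~\ref{cor:morphism-flat} already yields surjectivity of $\tau_p$ on $X$, and hence of $\Sym^p\cW_1 \to \cW_p$ on $Y$ after pullback and descent, so your separate curve argument for surjectivity is unnecessary.
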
 
 
\begin{proof}
For any integers $p,i,j\geqslant 0$, 
there are natural morphisms  ${\Sym}^p \cV_1 \to \cV_p$ and $\cV_i \otimes \cV_j \to \cV_{i+j}$. 
They are generically surjective since $\widetilde{A}$ is sufficiently ample on a general fiber of $\varphi$.  
Furthermore, these morphisms are all $G$-equivariant.
We can then obtain  generically surjective morphisms
\[ {\Sym}^p\cE_1 \to \cE_p  \mbox{ and } \cE_i \otimes \cE_j \to \cE_{i+j}. \]
By Corollary \ref{cor:morphism-flat}, they are surjective morphisms of flat vector bundles.

Moreover, the following  natural diagram is commutative,
\begin{equation*} 
\begin{tikzcd}[column sep=large, row sep=large]
    {\Sym}^i \cV_1 \otimes {\Sym}^j \cV_1   \ar[d," "]      \ar[r," "]  & \cV_i\otimes \cV_j  \ar[d," "]   \\
  {\Sym}^{i+j} \cV_1 \ar[r," "] &   \cV_{i+j}
\end{tikzcd}
\end{equation*} 
We then deduce that 
 \[ \bigoplus_{p\geqslant 0} {\Sym}^p\cE_1 \to \bigoplus_{p\geqslant 0} \cE_p \]
is  a surjective morphism of commutative graded $\cO_X$-algebras. 

By construction, we have $\rho^*\cE_p = \varphi^*\cW_p$ for any $p$.
Hence we obtain an induced surjective morphism of commutative graded $\cO_Y$-algebras 
\[ \bigoplus_{p\geqslant 0} {\Sym}^p\cW_1 \to \bigoplus_{p\geqslant 0} \cW_p.\]
This completes the proof of the lemma.
\end{proof}

We are now ready to finish the proof of  Theorem \ref{thm:decomposition}.

\begin{proof}[{ Proof of Theorem \ref{thm:decomposition} }]
By Lemma \ref{lemma:morphism-Wp} and by Lemma \ref{lemma:loc-trivial},  the variety
\[Z = \mathrm{Proj}_{\cO_Y} \bigoplus_{p\geqslant 0} \cW_p\] is a locally trivial family over $Y$. 
Furthermore  the connection $\nabla_{\cW_1}$ induces a foliation  $\cG_Z$ on $Z$,  
which is  transverse to the fibers of $Z\to Y$.

By removing some closed subset of codimension at least two, we assume that $\cV_1$ is locally free on $Y_0$.  
By Lemma \ref{lemma:Vp-Wp}, there is a natural isomorphism  between $\cW_p|_{Y_0}$  and $\cV_p^{**}|_{Y_0}$.  
Since ${\Sym}^p \cW_1 \to \cW_p$ is surjective, we conclude that the natural morphism $${\Sym}^p \cV_1|_{Y_0} \to \cV_p|_{Y_0}$$ is surjective, 
and  $\cV_p|_{Y_0}  \cong \cW_p|_{Y_0}$ is locally free for any $p\geqslant 0$.

We recall that $\cV_p=\varphi_*\cO_{\Gamma}(L_p)$, where 
$$L_p=c_p|E''|+pr_{m_0}(\widetilde{A}+F).$$
In particular, the morphism ${\Sym}^p\cV_1 \to \cV_p$ factors through $\varphi_*\cO_\Gamma(pL_1)$.
Therefore, we obtain isomorphisms  $$\varphi_*\cO_\Gamma(pL_1)|_{Y_0} \cong \cV_p|_{Y_0} \cong \cW_p|_{Y_0}.$$
Hence, there is a natural  rational map 
\[\psi \colon \Gamma \dashrightarrow  Z, \] 
induced by the $\varphi$-relative linear system $\cV_1$ of  the line bundle $\cO_\Gamma(L_1)$.
Since $L_1$ is the sum of a $\varphi$-relatively sufficiently ample divisor and an effective $\rho$-exceptional divisor,
we see that $\psi$ only contracts $\rho$-exceptional divisors. 
In particular, up to a closed subset of codimension at least two, the restriction $$\psi|_{\Gamma^\circ} \colon \Gamma^\circ \dashrightarrow Z$$ is an isomorphism to its image.  
We recall that $\Gamma^\circ\subseteq \Gamma$ is the largest open subset such that $\rho|_{\Gamma^\circ}$ is finite.

The foliation $\cG_Z$  induces a foliation $\cG_\Gamma$ on $\Gamma$. 
Then $\cG_\Gamma$ is transverse to $\cF'$ over $\Gamma^\circ$.
We recall that $\cF'$ is the foliation induced by $\varphi\colon  \Gamma \to Y$.
Furthermore, $\cG_Z$ is $G$-linearized as it is induced by  the $G$-equivariant connection  $\nabla_{\cW_1}$  (see Remark \ref{remark:Ehresmann-connection}).
Hence $\cG_\Gamma$ descends to a foliation $\cG$ on $X$.

We will show that $T_X = \cF \oplus \cG.$ For the reason of ranks, it is equivalent to show that $\cF$ and $\cG$ are transverse.
It is enough to show this in  codimension one of $X$.  
First we note that, on $X_1 \subseteq X$, the largest open subset over which $\rho$ is \'etale,  they are transverse, for their pullback are transverse in $\rho^{-1}(X_1) \subseteq \Gamma^\circ$. 

Now we consider a prime divisor $D$ in $X$ contained in the discriminant of $\rho$. 
Then by construction (see Lemma \ref{lemma:good-semistable-reduction} and Corollary  \ref{cor:good-semistable-reduction}), there is a prime divisor $B$ in $V$ such that $f$ is smooth over the generic point of $B$, and that $D$ is the unique  component of  $e(f^*B) $ which has codimension one.
\begin{equation*} 
\begin{tikzcd}[column sep=large, row sep=large]
  \Gamma \ar[d,"{\varphi}"]  \arrow[bend left]{rr}{\rho}   \ar[r,"{}"]  & U  \ar[d,"f"]\ar[r,"e"] & X\\
 Y \ar[r,"\iota"] & V &
\end{tikzcd}
\end{equation*}
Let  $V_1$ be an open neighborhood  of the generic point of $B$ such that $f$ is smooth over $V_1$.  
We write $Y_1=\iota^{-1}(V_1)$  and $U_1=f^{-1}(V_1)$. 
There is an open subset $U_2\subseteq U_1$ such that $e|_{U_2}$ is an isomorphism onto its image and that $e(U_2)$ contains the generic point of $D$. 
We remark that  $\varphi^{-1}(Y_1)  \cong U_1\times_{V_1} Y_1$ and  that  $ U_2\times_{V_1} Y_1 \subseteq \Gamma^\circ$. 
Since $\cG_\Gamma$ is transverse to $\cF'$ over $\Gamma^\circ$, by Lemma \ref{lemma:descend-splitting}, we deduce that $\cG|_{e(U_2)}$ is transverse to $\cF|_{e(U_2)}$. 
This completes the proof of the theorem.
\end{proof}

\renewcommand\refname{Reference}
\bibliographystyle{alpha}
\bibliography{foliation_nef}

\end{document}